\documentclass[10pt,a4paper]{amsart} 
\usepackage{cite}
\usepackage{amsaddr}
\usepackage{amssymb,amsthm,amsmath,amsfonts,amscd}
\usepackage{hyperref} 
\usepackage{url}
\usepackage[english]{babel}
\usepackage{enumerate}
\usepackage{esint}
\usepackage{bbm}
\usepackage{mathrsfs}

\newcommand{\norm}[1]{\left\lVert#1\right\rVert}

\newcommand{\abs}[1]{\lvert#1\rvert}
\newcommand{\lrabs}[1]{\left\lvert#1\right\rvert}

\newcommand{\lrbr}[2]{\left\langle#1,{#2}\right\rangle}

\newcommand{\gdualdel}[4]{\sideset{_{#1}}{_{#2}}{\mathop{\left\langle{#3},{#4}\right\rangle}}}


\newcommand{\up}[1]{\textup{#1}}

\renewcommand{\tilde}{\widetilde}
\renewcommand{\hat}{\widehat}

\renewcommand{\d}{d}
\newcommand{\D}{\textup{D}}

\newcommand{\eps}{{\varepsilon}}
\renewcommand{\phi}{\varphi}


\newcommand{\R}{\mathbbm{R}}
\newcommand{\N}{\mathbbm{N}}


\newcommand{\sign}{\operatorname{sign}}
\newcommand{\esssup}{\operatorname{ess\;sup}}
\newcommand{\essinf}{\operatorname{ess\;inf}}
\renewcommand{\div}{\operatorname{div}}

\newcommand{\supp}{\operatorname{supp}}

\newcommand{\diam}{\operatorname{diam}}

\newcommand{\osc}{\operatorname{osc}}

\newcommand{\loc}{\textup{loc}}

\newcommand{\BIGOP}[1]{\mathop{\mathchoice%
{\raise-0.22em\hbox{\huge $#1$}}%
{\raise-0.05em\hbox{\Large $#1$}}{\hbox{\large $#1$}}{#1}}}

\newcommand{\BIGboxplus}{\mathop{\mathchoice%
{\raise-0.35em\hbox{\huge $\boxplus$}}%
{\raise-0.15em\hbox{\Large $\boxplus$}}{\hbox{\large $\boxplus$}}{\boxplus}}}

\numberwithin{equation}{section}

\newtheorem{theorem}{Theorem}[section] 

\theoremstyle{plain}
\newtheorem{defi}[theorem]{Definition}
\newtheorem{lem}[theorem]{Lemma}

\newtheorem{prop}[theorem]{Proposition}
\newtheorem{rem}[theorem]{Remark}

\newtheorem{thm}[theorem]{Theorem}

\newtheorem{cor}[theorem]{Corollary}
\newtheorem*{bem*}{Bemerkung}

\newcommand{\ol}[1]{{#1}^\mu}

\begin{document}
\title[Uniqueness of weighted Sobolev spaces]{Uniqueness of weighted Sobolev spaces with weakly differentiable weights}
\author[J.M. T\"olle]{Jonas M. T\"olle}
\address{Institut f\"ur Mathematik, Technische Universit\"at Berlin (MA 7-5)\\
Stra\ss{}e des 17. Juni 136, 10623 Berlin, Germany}
\email{jonasmtoelle@gmail.com}
\thanks{The research was partly supported by the German Science Foundation (DFG), IRTG 1132, ``Stochastics and Real World Models'' and the Collaborative Research Center 701 (SFB 701), ``Spectral Structures and Topological Methods in Mathematics'', Bielefeld.}
\begin{abstract}
We prove that weakly differentiable weights $w$ which, together with their reciprocals, satisfy certain local integrability conditions, admit a
unique associated first-order $p$-Sobolev space, that is
\[H^{1,p}(\R^d,w\,\d x)=V^{1,p}(\R^d,w\,\d x)=W^{1,p}(\R^d,w\,\d x),\]
where $d\in\N$ and $p\in [1,\infty)$.
If $w$ admits a (weak) logarithmic gradient $\nabla w/w$ which is in $L^q_\loc(w\,\d x;\R^d)$, $q=p/(p-1)$, we propose an alternative definition of the weighted $p$-Sobolev
space based on an integration by parts formula involving $\nabla w/w$.
We prove that weights of the form $\exp(-\beta\abs{\cdot}^q-W-V)$ are $p$-admissible, in particular, satisfy a
Poincar\'e inequality, where $\beta\in (0,\infty)$, $W$, $V$ are convex and bounded below such that $\abs{\nabla W}$ satisfies a growth condition (depending on $\beta$ and $q$) and $V$ is bounded.
We apply the uniqueness result to weights of this type. The associated nonlinear degenerate evolution equation is also discussed.
\end{abstract}
\keywords{$H=W$, weighted Sobolev spaces, smooth approximation, density of smooth functions, Poincar\'e inequality, $p$-Laplace operator, nonlinear Kolmogorov operator, weighted $p$-Laplacian evolution, nonlinear degenerate parabolic equation.}
\subjclass[2000]{46E35; 35J92, 35K65}

\maketitle

\section{Introduction}

Consider the following quasi-linear PDE in $\R^d$ (in the weak sense)
\begin{equation}\label{pde}-\div\left[w\abs{\nabla u}^{p-2}\nabla u\right]=fw,\end{equation}
(here $1<p<\infty$) where $w\ge 0$ is a locally integrable function, the \emph{weight} and $f$ is sufficiently regular
(e.g $f\in L^q(w\,\d x)$, see below).
Let $\mu(\d x):=w\,\d x$, $q:=p/(p-1)$.
The nonlinear weighted $p$-Laplace operator involved in \eqref{pde} can be identified with the G\^ateaux derivative of the
convex functional
\begin{equation}\label{energy}E_0^\mu:u\mapsto\frac{1}{p}\int\abs{\nabla u}^p\,\d\mu.\end{equation}
By methods well known
in calculus of variations, solutions to \eqref{pde} are characterized by minimizers of the convex functional
\begin{equation}
E_f^\mu:u\mapsto E_0^\mu(u)-\int fu\,\d\mu.
\end{equation}
Of course, the minimizer obtained depends on the energy space chosen for the
functional \eqref{energy}. It is natural to demand that the space of test functions $C_0^\infty$ is included in this energy space (where we take the subscript zero to denote functions with \emph{compact support} rather than functions \emph{vanishing at infinity}).

Therefore, let $H^{1,p}(\mu)$ be the completion of $C_0^\infty$ w.r.t. the Sobolev norm
\[\norm{\cdot}_{1,p,\mu}:=\left(\norm{\nabla\cdot}_{L^p(\mu;\R^d)}^p+\norm{\cdot}_{L^p(\mu)}^p\right)^{1/p}.\]
$H^{1,p}(\mu)$ is referred to as the so-called \emph{strong weighted Sobolev space}.
Of course, in order to guarantee that $H^{1,p}(\mu)$ will be a space of functions
we need a ``closability condition'', see equation \eqref{clos} below.

Let $V$ be a weighted Sobolev space such that
\begin{itemize}
 \item $V\subset L^p(\mu)$ densely and continuously,
 \item $V$ admits a linear gradient-operator $\nabla^V:V\to L^p(\mu;\R^d)$ that respects $\mu$-classes,
 \item $V$ is complete w.r.t. the Sobolev norm,
 \item $C_0^\infty\subset V$ and $\nabla u=\nabla^V u$ $\mu$-a.e. for $u\in C_0^\infty$ and hence $H^{1,p}_0(\mu)\subset V$.
\end{itemize}
In the case that
\[H^{1,p}(\mu)\subsetneqq V,\]
the so-called \emph{Lavrent'ev phenomenon}, first described in \cite{Lavr},
occurs if
\[\min_{u\in V}E_f(u)<\min_{u\in H^{1,p}(\mu)}E_f(u).\]
This leads to different variational solutions to equation \eqref{pde}, as discussed in detail in \cite{Past}.
In order to prevent this possibility, we are concerned with the problem
\[H^{1,p}(\mu)=V,\]
which is equivalent to the density of $C_0^\infty$ in $V$ and therefore is called
``smooth approximation''. Classically, if $w\equiv 1$, the solution to this problem is
known as the Meyers-Serrin theorem \cite{MS} and briefly denoted by $H=W$.
If $p=2$, the problem is also known as ``Markov uniqueness'', see \cite{ARZ2,ARZ,Eb,RZ3,RZ4}.

$H=W$ for weighted Sobolev spaces ($p\not=2$) has been studied e.g. in \cite{PC,Kil,Zhi3}.
$H=W$ is in particular useful for identifying a Mosco limit \cite{K1,Toe3}

We are going to investigate two types of weighted Sobolev spaces substituting $V$.

Let $\phi:=w^{1/p}$. Consider following condition for $p\in [1,\infty)$
\[\tag{\textbf{Diff}}\phi\in W^{1,p}_\loc(\d x),\quad\beta:=p\frac{\nabla\phi}{\phi}\in L^q_\loc(\mu;\R^d)\]
Assuming (\textbf{Diff}), we shall define the Sobolev space $V^{1,p}(\mu)$ (which
extends $H^{1,p}(\mu)$) by saying
that $f\in V^{1,p}(\mu)$ if
$f\in L^p(\mu)$ and
there is a gradient
\[\ol{\nabla}f:=(\ol{\partial}_1 f,\ldots,\ol{\partial}_d f)\in L^p(\mu;\R^d)\]
such that the integration by parts formula
\begin{equation}\label{ibpintintr}
\int \ol{\partial}_i f\eta\,\d\mu=-\int f\partial_i\eta\,\d\mu-\int f\eta\beta_i\,\d\mu
\end{equation}
holds for all $\eta\in C_0^\infty(\R^d)$ and all $i\in\{1,\ldots,d\}$.
We point out that, in general, we do not expect $f\in L^{1}_{\loc}(dx)$! Therefore we cannot use distributional
derivatives here. Formula \eqref{ibpintintr} is based on the weak derivative of $fw$ rather
than on that of $f$, see Section \ref{ibpsubsec}
for details.

For $p=2$, this framework has been carried out by Albeverio et al.
in \cite{AKR,AR,AR3,ARZ}.

Assuming (\textbf{Diff}),
equation \eqref{pde} has the following heuristic reformulation
\[-\div\left[\abs{\nabla u}^{p-2}\nabla u\right]-\lrbr{\abs{\nabla u}^{p-2}\nabla u}{\beta}=f,\]
which suggests that \eqref{pde} can be regarded as a first-order perturbation of
the unweighted $p$-Laplace equation. In these terms, \eqref{pde} mimics a nonlinear
Kolmogorov operator.

Let us state our main result.
\begin{thm}\label{CFthm}
Assume \up{(\textbf{Diff})}. If $p=1$, assume additionally that
\begin{equation}\label{LipGrad}\nabla \phi\in L^\infty_\loc(\d x;\R^d).\end{equation}
Then $C_0^\infty(\R^d)$ is dense in $V^{1,p}(\mu)$, and, in particular,
\[H^{1,p}(\mu)=V^{1,p}(\mu).\]
\end{thm}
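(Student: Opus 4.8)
The plan is to prove density of $C_0^\infty(\R^d)$ in $V^{1,p}(\mu)$ via the classical two-step scheme (truncation in space, then mollification), carefully adapted so that both operations are controlled in the $V^{1,p}$-norm rather than in a distributional sense — the point being that elements of $V^{1,p}(\mu)$ need not lie in $L^1_\loc(\d x)$, so all estimates must be phrased through the integration-by-parts formula \eqref{ibpintintr} and the weight $w$.

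First I would reduce to functions of bounded support. Given $f\in V^{1,p}(\mu)$, pick cutoffs $\chi_n\in C_0^\infty(\R^d)$ with $\chi_n=1$ on $B_n$, $\supp\chi_n\subset B_{2n}$, and $\abs{\nabla\chi_n}\leq C/n$. One checks that $\chi_n f\in V^{1,p}(\mu)$ with $\ol\nabla(\chi_n f)=\chi_n\ol\nabla f+f\nabla\chi_n$; this product rule must be verified directly from \eqref{ibpintintr} by testing against $\eta\in C_0^\infty$ and using that $\chi_n\eta$ is again an admissible test function, the $\beta_i$-term going along for the ride. Dominated convergence in $L^p(\mu)$ — using $f,\ol\nabla f\in L^p(\mu)$ and $\mu$ finite on balls — then gives $\chi_n f\to f$ in $V^{1,p}(\mu)$, so it suffices to approximate $f$ with bounded support.

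Second, for $f$ with $\supp f\subset B_R$, I would mollify: set $f_\eps:=f*\rho_\eps$ (a genuine convolution — here we must know $f\in L^1_\loc(\d x)$ on the support, which follows from $f\in L^p(\mu)$, $w^{-1/(p-1)}\in L^1_\loc$ or the analogous consequence of (\textbf{Diff})). Each $f_\eps\in C_0^\infty$. The crux is to show $f_\eps\to f$ and $\nabla f_\eps\to\ol\nabla f$ in $L^p(\mu)$. The natural route is to write, using \eqref{ibpintintr} and $\nabla f_\eps=(\ol{\text{something}})$: one identifies $\partial_i f_\eps$ via the formula $\partial_i f_\eps(x)=-\int f(y)\,\partial_{x_i}\rho_\eps(x-y)\,\d y$, inserts a $\pm$ involving $\beta$ coming from \eqref{ibpintintr}, and obtains $\partial_i f_\eps=(\ol\partial_i f)*\rho_\eps + r_\eps$ where the remainder $r_\eps$ is built from $f\beta_i$ convolved against $\rho_\eps$ and against a shifted version, hence $r_\eps\to0$ locally because $f\beta_i\in L^1_\loc(\d x)$ by (\textbf{Diff}) (this uses $\beta\in L^q_\loc(\mu)$ and $f\in L^p(\mu)$, paired by Hölder with the weight). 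Convergence $g*\rho_\eps\to g$ holds in $L^p(\mu)$ on a fixed ball whenever $g\in L^p(\mu)$ and one controls the weight — this is where a local integrability/continuity property of $w$ (available under (\textbf{Diff})) is used to pass from $L^p(\d x)$-convergence of the mollification to $L^p(\mu)$-convergence, e.g. by a further truncation/equi-integrability argument. For $p=1$ the extra hypothesis \eqref{LipGrad} enters precisely here: it guarantees $\beta=\nabla\phi/\phi\cdot p$ stays locally bounded against $\phi$, so the remainder terms are handled without the missing reflexivity/duality.

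The main obstacle is exactly the passage from convergence with respect to Lebesgue measure to convergence with respect to $\mu$ in the mollification step: mollification is natural for $\d x$ but we need it in $L^p(w\,\d x)$, and $w$ may be unbounded and vanish, so one cannot simply dominate. I expect to handle this by combining the support truncation with a splitting of the ball into the region where $w$ is moderate and its complement, using the local integrability of $w$ and $w^{-1/(p-1)}$ from (\textbf{Diff}) to make the bad region contribute little, together with standard continuity of translations in $L^p$. The product-rule verification for $\chi_n f$ and the clean identification of $\partial_i f_\eps$ modulo a vanishing $\beta$-remainder are the technical heart; once those are in place, a diagonal sequence $\chi_n f$ followed by $(\chi_n f)_\eps$ yields the desired $C_0^\infty$-approximants, proving $H^{1,p}(\mu)=V^{1,p}(\mu)$.
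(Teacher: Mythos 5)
Your overall skeleton (reduce to compactly supported functions, mollify, show the mollifications converge in the weighted norm) coincides with the paper's, but the two steps you defer to ``standard'' arguments are exactly where the work lies, and as stated they contain genuine gaps. The first is in the reduction: you only truncate in space, and your justification that $f\ast\rho_\eps$ is a genuine convolution is incorrect, since \up{(\textbf{Diff})} does \emph{not} imply $w^{-1/(p-1)}=\phi^{-q}\in L^1_\loc(\d x)$ — that is the separate hypothesis \up{(\textbf{Reg})} — and the paper stresses that elements of $V^{1,p}(\mu)$ need not lie in $L^1_\loc(\d x)$ at all ($w$ may vanish on open sets). You must also truncate in range, replacing $f$ by $(-N)\vee f\wedge N$ via the chain rule of Lemma \ref{contractionlem}, so that $f$ is bounded with compact support; only then is $\eta_\eps\ast f$ meaningful.

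The second gap is in the identification of $\partial_i(\eta_\eps\ast f)$. Testing \eqref{ibpeq} with $\eta(y)=\eta_\eps(x-y)$ yields a formula for $\partial_i\bigl(\eta_\eps\ast(f\phi^p)\bigr)$ — the weight $\phi^p$ rides along inside the convolution — and not the decomposition $\partial_i(\eta_\eps\ast f)=(\ol{\partial}_i f)\ast\eta_\eps+r_\eps$ with $r_\eps\to 0$ that you posit. To compare $\phi\,\partial_i(\eta_\eps\ast f)$ with $\eta_\eps\ast(\phi\,\ol{\partial}_i f)$ in $L^p(\d x)$ one needs an integration-by-parts formula carrying a \emph{single} power of $\phi$, namely $f\phi\in W^{1,1}_\loc(\d x)$ with $\partial_i(f\phi)=\phi\,\ol{\partial}_i f+f\partial_i\phi$ (Lemma \ref{CFlem}); deriving this from \eqref{ibpeq} is itself a nontrivial approximation argument involving cutoffs of $\phi^{1-p}$ and Lemmas \ref{pqlem} and \ref{Wqlem}. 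Even granting that, the resulting commutator $\int\partial_i\eta_\eps(x-y)f(y)\left[\phi(x)-\phi(y)\right]\d y$ requires a dominating function uniform in $\eps$, which the paper obtains from Hedberg's maximal-function inequality \eqref{hardyineq}; this is precisely where \eqref{LipGrad} enters for $p=1$ (the maximal operator is not bounded on $L^1$), not for controlling $\beta$ as you suggest. Finally, the resulting bound still contains the non-vanishing term $\sum_j\norm{f\partial_j\phi}_{L^p(\d x)}^p$, which forces a second approximation layer (applying the estimate to $f-\eta_\delta\ast f$). None of these mechanisms appears in your sketch, and ``continuity of translations plus equi-integrability'' will not substitute for them.
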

For $p=2$, Theorem \ref{CFthm} was proved by R\"ockner and Zhang \cite{RZ3,RZ4} using methods from
the theory of Dirichlet forms depending strongly on the $L^2$-framework.
For weights of the type $\mu(\d x)= Z^{-1}e^{-U(x)}\,\d x$, $Z:=\int e^{-U(x)}\,\d x$, Lorenzi
and Bertoldi proved Theorem \ref{CFthm} under much stronger differentiability assumptions,
see \cite[Theorem 8.1.26]{LoreBert07}. We also refer to Chapter 2.6 of Bogachev's book \cite{Bogachev10} for related results.

Our proof is carried out in Section \ref{CFsect} and
inspired by the work of Patrick Cattiaux and
Myriam Fradon \cite{CF}. In contrary to their proof, in which Fourier transforms are used (relying on the $L^2$-framework),
we shall use maximal functions in order to obtain the fundamental uniform estimate.
Of course,
formula \eqref{ibpintintr} provides highly useful for the proof.

Consider the following well known condition for $p\in [1,\infty)$
\[\tag{\textbf{Reg}}\left.\begin{aligned}\phi^{-q}\in L^1_\loc(\d x),&\quad\text{for $p\in (1,\infty)$}\\
\phi^{-1}\in L^\infty_\loc(\d x),&\quad\text{for $p=1$}\end{aligned}\right\}.\]

Condition (\textbf{Reg}) (``regular'') implies that each Sobolev function is a \emph{regular}
\emph{(Schwartz)} \emph{distribution}, see Section \ref{Kufsec}.

Let $\D$ be the gradient in the sense of Schwartz distributions. Assuming (\textbf{Reg}), we define
\[W^{1,p}(\mu):=\left\{u\in L^p(\mu)\;\vert\;\D u\in L^p(\mu;\R^d)\right\},\]
see e.g. \cite{KO}. We shall refer to $W^{1,p}(\mu)$ as the so-called \emph{Kufner-Sobolev space}, due to [26], and remark that its definition is the standard one in the literature of weighted Sobolev spaces.
It is well known that $H^{1,p}(\mu)=W^{1,p}(\mu)$ is implied by the famous \emph{$p$-Muckenhoupt condition}, due to \cite{Muck72}, in symbols $w\in A_p$, $1<p<\infty$, where $A_p$ is defined as follows:
$w=\phi^p\in A_p$ if and only if there is a global constant $K>0$ such that
\begin{equation}\label{Muckeq}
\left(\fint_B \phi^p\,\d x\right)\cdot\left(\fint_B \phi^{-q}\,\d x\right)^{p-1}\le K,\end{equation}
for all balls $B\subset\R^d$. See Proposition \ref{Muckprop} below for the proof.
We refer to the lecture notes by Bengt Ove Turesson \cite{Tur} for a detailed discussion of the
class $A_p$. See also \cite[Ch. 15]{HKM}.

As a consequence of Theorem \ref{CFthm}, we obtain the following result:
\begin{cor}\label{maincor}
Assume \up{(\textbf{Reg})}, \up{(\textbf{Diff})}, and if $p=1$ assume also that \eqref{LipGrad} holds. Then
\[H^{1,p}(\mu)=V^{1,p}(\mu)=W^{1,p}(\mu).\]
\end{cor}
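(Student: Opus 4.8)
The plan is to deduce the corollary from Theorem~\ref{CFthm} by trapping the Kufner--Sobolev space between the other two spaces. Since Theorem~\ref{CFthm} already gives $H^{1,p}(\mu)=V^{1,p}(\mu)$, it suffices to prove the two inclusions $H^{1,p}(\mu)\subseteq W^{1,p}(\mu)\subseteq V^{1,p}(\mu)$. The common tool is the embedding $L^p(\mu)\hookrightarrow L^1_\loc(\d x)$, valid under (\textbf{Reg}): writing $\abs{f}=\abs{f}\phi\cdot\phi^{-1}$ and applying Hölder on an arbitrary ball $B$ with exponents $p$ and $q$ bounds $\int_B\abs{f}\,\d x$ by $\norm{f}_{L^p(B,\mu)}\cdot\norm{\phi^{-1}}_{L^q(B,\d x)}$ (for $p=1$ one uses $\phi^{-1}\in L^\infty_\loc$ instead), and the componentwise version yields $L^p(\mu;\R^d)\hookrightarrow L^1_\loc(\d x;\R^d)$. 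In particular every $u\in W^{1,p}(\mu)$ has $u,\D u\in L^1_\loc(\d x)$, hence $u\in W^{1,1}_\loc(\d x)$; moreover $\phi>0$ a.e.\ under (\textbf{Reg}), so the $\mu$-classes and the $\d x$-classes of functions coincide.

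For the first inclusion, let $u\in H^{1,p}(\mu)$ and choose $u_n\in C_0^\infty(\R^d)$ with $u_n\to u$ in $L^p(\mu)$ and $\nabla u_n\to g$ in $L^p(\mu;\R^d)$, where $(u,g)$ is the pair representing the abstract element. By the embedding above, $u_n\to u$ and $\nabla u_n\to g$ in $L^1_\loc(\d x)$, hence in $\Dscr'(\R^d)$; since $\D$ is continuous on $\Dscr'(\R^d)$ and $\D u_n=\nabla u_n$, we get $\D u=g\in L^p(\mu;\R^d)$. Thus $u\in W^{1,p}(\mu)$, the distributional gradient $\D u$ coincides with the $H^{1,p}$-gradient of $u$, and the Sobolev norms agree. (In passing this shows that under (\textbf{Reg}) the closability condition for $H^{1,p}(\mu)$ is automatic, so $H^{1,p}(\mu)$ is a genuine function space.)

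The core of the proof is $W^{1,p}(\mu)\subseteq V^{1,p}(\mu)$. Given $u\in W^{1,p}(\mu)$, I claim $\D u$ may be used as the gradient $\ol{\nabla}u$ in the definition of $V^{1,p}(\mu)$, i.e.\ that \eqref{ibpintintr} holds with $\ol{\partial}_i f$ there replaced by $\D_i u$ (taking $f=u$). The reduction is to the Leibniz rule $\D_i(uw)=w\,\D_i u+u\,\D_i w$ in $\Dscr'(\R^d)$ together with the identity $\D_i w=\beta_i w$. The latter is the chain rule $\D(\phi^p)=p\,\phi^{p-1}\D\phi$ (trivial for $p=1$), which is legitimate because $\phi\in W^{1,p}_\loc(\d x)$ and $p\,\phi^{p-1}\D\phi\in L^1_\loc(\d x)$ by Hölder, using $\phi^{p-1}\in L^q_\loc(\d x)$ --- which for $p>1$ is equivalent to $\phi^p=w\in L^1_\loc(\d x)$. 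Granting the Leibniz rule, testing it against $\eta\in C_0^\infty(\R^d)$ gives $\int uw\,\partial_i\eta\,\d x=-\int(w\,\D_i u+u\,\D_i w)\eta\,\d x$; substituting $\D_i w=\beta_i w$ and rewriting via $\d\mu=w\,\d x$ yields precisely $\int\D_i u\,\eta\,\d\mu=-\int u\,\partial_i\eta\,\d\mu-\int u\eta\beta_i\,\d\mu$, i.e.\ \eqref{ibpintintr}; since $\D u\in L^p(\mu;\R^d)$ by hypothesis, $u\in V^{1,p}(\mu)$ follows. To establish the Leibniz rule for the (possibly unbounded, non-smooth) factors $u,w\in W^{1,1}_\loc(\d x)$, I would truncate: replace $u$ and $w$ by their truncations at height $N$, invoke the classical product rule for bounded $W^{1,1}_\loc$ functions, and let $N\to\infty$ by dominated convergence.

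The step I expect to be the main obstacle --- and the only point where (\textbf{Reg}) and (\textbf{Diff}) are genuinely used --- is verifying that the three products $uw$, $w\,\D u$, $u\,\D w$ all belong to $L^1_\loc(\d x)$, which is exactly the integrability required to pass to the limit in the truncation argument by dominated convergence. Here $uw=(u\phi)\,\phi^{p-1}$ and $w\,\D_i u=\bigl((\D_i u)\phi\bigr)\phi^{p-1}$ are controlled by Hölder with exponents $p$ and $q$, using $u\phi,\,(\D u)\phi\in L^p_\loc(\d x)$ (from $u\in W^{1,p}(\mu)$) together with $\phi^{p-1}\in L^q_\loc(\d x)$; and $u\,\D_i w=u\beta_i w=(u\phi)\,(\beta_i\phi^{p-1})$ is controlled the same way, because $\beta\phi^{p-1}\in L^q_\loc(\d x)$ is precisely the assertion $\beta\in L^q_\loc(\mu;\R^d)$ of (\textbf{Diff}) (for $p=1$ one reads $q=\infty$ and uses $\beta\in L^\infty_\loc$, again from (\textbf{Diff})). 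Combining the two inclusions with Theorem~\ref{CFthm} gives $H^{1,p}(\mu)=W^{1,p}(\mu)=V^{1,p}(\mu)$; since each space carries the Sobolev norm and, by the identifications above together with density of $C_0^\infty(\R^d)$ in $H^{1,p}(\mu)$ and $V^{1,p}(\mu)$, the gradients $\nabla^{H}$, $\D$ and $\ol{\nabla}$ all agree $\mu$-a.e., the equalities hold as normed spaces.
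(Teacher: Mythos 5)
Your proof is correct and follows essentially the same route as the paper: both obtain $H^{1,p}(\mu)\subseteq W^{1,p}(\mu)$ from the $L^1_{\loc}$-embedding furnished by (\textbf{Reg}) together with continuity of distributional derivatives, and both reduce the converse inclusion $W^{1,p}(\mu)\subseteq V^{1,p}(\mu)$ to the Leibniz rule for $f\phi^p$ combined with the identity $\nabla(\phi^p)=\beta\phi^p$ (Lemma \ref{pqlem}), before invoking Theorem \ref{CFthm}. The only organizational difference is that the paper first treats bounded $f\in W^{1,p}(\mu)$ and then uses density of bounded functions in $W^{1,p}(\mu)$, whereas you truncate both factors inside the product-rule argument; the two devices are interchangeable.
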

We shall give a precise proof in Section \ref{Kufsec}.
As an application, we investigate the evolution problem related to PDE \eqref{pde} in
Section \ref{evo_sec}. In particular, we provide existence and uniqueness of the following (global)
evolution equation in $L^2(\mu)$, $p\ge 2$,
\begin{equation}\label{eeq}\left.\begin{aligned}\partial_t u&=\frac{1}{w}\div\left[w\abs{\nabla u}^{p-2}\nabla u\right],&&\quad\text{in}\;\;(0,T)\times\R^d,\\
u(\cdot,0)&=u_0\in L^2(\mu),&&\quad\text{in}\;\;\R^d.\end{aligned}\right\}\end{equation}

See \cite{AMRT} for an example of the (local and nonlocal) weighted evolution problem with Muckenhoupt weights. We also refer the work by Hauer and Rhandi \cite{HauRha}, who prove a non-existence result for the global weighted evolution problem.

An application related to nonlinear potential theory and the elliptic equation \eqref{pde}
is given by
the notion of $p$-admissibility, as introduced by Heinonen, Kilpel\"ainen and Martio in \cite{HKM}
(see Definition \ref{paddefi} below).

We say that a function $F:\R^d\to\R$ \emph{has property} (D), if there are constants $c_1\ge 1$, $c_2\in\R$ such that
$F(2x)\le c_1 F(x)+c_2$. If $F$ is concave, it has property (D) with $c_1=2$ and $c_2=F(0)$. With the help of the ideas of
Hebisch and Zegarli\'nski \cite{HeZe} we are able to prove:

\begin{thm}\label{p-ad-thm}
Let $1<p<\infty$, $q:=p/(p-1)$. Let $\beta\in (0,\infty)$, let $W\in C^1(\R^d)$ be bounded below
and suppose that
\[\abs{\nabla W(x)}\le\delta\abs{x}^{q-1}+\gamma\]
for some $\delta<\beta q$ and $\gamma\in (0,\infty)$. Suppose also that $-W$ has property \up{(D)}.
Let $V:\R^d\to\R$ be a measurable function such that $\osc V:=\sup V-\inf V<\infty$ and $-V$ has property \up{(D)}.

Then
\[x\mapsto\exp(-\beta\abs{x}^q-W(x)-V(x))\]
is a $p$-admissible weight. If, additionally, $V\in W^{1,\infty}_\loc(\d x)$, this weight satisfies the
conditions of Corollary \ref{maincor}.
\end{thm}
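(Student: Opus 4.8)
The plan is to verify the four conditions in the definition of $p$-admissibility (Definition~\ref{paddefi}) for the weight $w(x)=\exp(-\beta|x|^q-W(x)-V(x))$, and then check conditions (\textbf{Reg}) and (\textbf{Diff}) separately. Write $w_0(x):=\exp(-\beta|x|^q-W(x))$ so that $e^{-\osc V}w_0\le w\le e^{\osc V}w_0$; since the $A_p$/doubling/Poincaré properties and local integrability are all invariant under multiplication by a bounded-above-and-below factor, it suffices to establish everything for $w_0$. The doubling property of $\mu_0:=w_0\,dx$ I would obtain from the property (D) hypothesis on $-W$ together with an elementary comparison $|2x|^q\le 2^q|x|^q$: on a ball $B(x_0,2r)$ one controls $\mu_0(B(x_0,2r))$ by $\mu_0(B(x_0,r))$ after a dyadic rescaling argument, using that $-\beta|\cdot|^q-W$ changes in a controlled way. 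Local boundedness above and below of $w_0$ on compact sets is immediate from continuity of $W\in C^1$, which simultaneously gives (\textbf{Reg}) (indeed $\phi^{-q}=w^{-1}\in L^\infty_{\loc}$) and the local $W^{1,p}$-regularity of $\phi=w^{1/p}$ needed for (\textbf{Diff}); the logarithmic gradient is $\beta_{\log}=\nabla w/w=-\beta q|x|^{q-2}x-\nabla W-\nabla V$, which lies in $L^q_{\loc}(\mu)$ because the first two terms are locally bounded and $\nabla V\in L^\infty_{\loc}$ by the extra assumption $V\in W^{1,\infty}_{\loc}$.

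The heart of the matter is the Poincaré inequality — more precisely the $(1,p)$-Poincaré inequality on balls, or equivalently (via Hebisch–Zegarliński \cite{HeZe}) a global $L^q$-type Poincaré/spectral-gap inequality for $\mu$. Here I would invoke the machinery of \cite{HeZe}: for measures of the form $e^{-U}\,dx$ with $U$ a convex "reference" part plus a bounded-oscillation perturbation, a $q$-Poincaré (or $L^q$-log-Sobolev-type) inequality holds provided the reference potential grows fast enough to dominate the gradient of the perturbation. The reference potential is $U_0(x)=\beta|x|^q$, whose gradient has size $\beta q|x|^{q-1}$, and the perturbation gradient $|\nabla W(x)|\le\delta|x|^{q-1}+\gamma$ is, by the hypothesis $\delta<\beta q$, strictly dominated at infinity; the bounded part $V$ is absorbed by the standard Holley–Stroock perturbation argument at the cost of a factor $e^{\osc V}$. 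This yields the global inequality for $\mu$, from which the local $(1,p)$-Poincaré inequalities on balls (with the doubling measure $\mu$) follow by restriction/localization. The remaining Sobolev-type inequality in the definition of $p$-admissibility is then obtained from the Poincaré inequality together with doubling by the standard iteration (Saloff-Coste / HKM), so it need not be proved from scratch.

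Finally, to conclude the last sentence of the theorem I assemble the pieces: (\textbf{Reg}) and (\textbf{Diff}) have been checked above under the extra hypothesis $V\in W^{1,\infty}_{\loc}$, so Corollary~\ref{maincor} applies verbatim and gives $H^{1,p}(\mu)=V^{1,p}(\mu)=W^{1,p}(\mu)$. The main obstacle I anticipate is making the application of \cite{HeZe} clean: one must carefully match the convexity and growth hypotheses there (which are phrased for a single potential $U$) to our split $U=\beta|\cdot|^q+W+V$, verify that $W$ being convex and bounded below with the stated gradient bound places $\beta|\cdot|^q+W$ within the admissible class for their criterion, and track how the constant $\delta<\beta q$ enters the sufficient condition for the inequality — in particular ensuring the perturbation is subcritical rather than merely of the same order. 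Everything else (doubling from property (D), local regularity from $C^1$, the Sobolev inequality from Poincaré-plus-doubling) is routine once this spectral-gap input is secured.
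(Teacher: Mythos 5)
Your proposal follows essentially the same route as the paper: doubling from property (D), the Poincar\'e inequality from the Hebisch--Zegarli\'nski machinery with the subcriticality condition $\delta<\beta q$ and a bounded-oscillation (Holley--Stroock type) perturbation absorbing $V$, the Sobolev inequality from doubling plus Poincar\'e via Haj\l{}asz--Koskela, and (\textbf{Reg})/(\textbf{Diff}) from local boundedness and local Lipschitz regularity of the exponent. The one difference in emphasis is that the paper does not match a ready-made convexity criterion from \cite{HeZe} (indeed $W$ is not assumed convex in the theorem) but instead proves the required Lyapunov-type input $\int\abs{f}^p\abs{x}^{q-1}\,\d\nu\le C'\int\abs{\nabla f}^p\,\d\nu+D'\int\abs{f}^p\,\d\nu$ explicitly, by integrating by parts against $\sign(x)$ (Lemmas \ref{xqlem} and \ref{potlem}), before feeding it into \cite[Theorem 3.1]{HeZe} --- this is exactly the ``main obstacle'' you flag, and it is resolved by the gradient growth bound rather than by convexity.
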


\begin{rem}
If $V$ is convex, then $V$ is locally Lipschitz by \cite[Theorem 10.4]{Rock4} and hence $V\in W^{1,\infty}_\loc(\d x)$ by
\cite[\S 4.2.3, Theorem 5]{EG}.
\end{rem}

\begin{rem}
If $\osc V<\infty$, then the weight $\exp(-V)$ obviously satisfies Muckenhoupt's condition \eqref{Muckeq}
for all $1<p<\infty$.
\end{rem}

As an application of the main result \ref{CFthm}, the weighted Poincar\'e inequality
\[\int\lrabs{f-\frac{\int f\,w\,\d x}{\int\,w\,\d x}}^p\,w\,\d x\le c\int\abs{\nabla f}^p\,w\,\d x,\]
for the weight
$w:=\exp(-\beta\abs{\cdot}^q-W-V)$ also holds for $f\in V^{1,p}(w\,\d x)$ and for $f\in W^{1,p}(w\,\d x)$. We also point out, that by Kinderlehrer and Stampacchia \cite{KinSta} the stationary problem \eqref{pde} can be solved for $p$-admissible
weights, see \cite[Ch. 17, Appendix I]{HKM}.

\subsection*{Notation}

Equip $\R^d$ with the Euclidean norm $\abs{\cdot}$ and the Euclidean scalar product $\lrbr{\cdot}{\cdot}$.
For $i\in\{1,\ldots,d\}$, denote by $e_i$ the $i$-th unit vector in $\R^d$. For $\R^d$-valued functions $v$
we indicate the projection on the $i$-th coordinate by $v_i$.
We denote the (weak or strong)
partial derivative $\frac{\partial}{\partial x_i}$ by $\partial_i$. Also $\nabla:=(\partial_1,\ldots,\partial_d)$.
Denote by $C^\infty=C^\infty(\R^d)$, $C_0^\infty=C_0^\infty(\R^d)$ resp., the spaces of infinitely often continuously differentiable functions on $\R^d$, with compact support resp.
We denote the standard Sobolev spaces (local Sobolev spaces resp.) on $\R^d$ by $W^{1,p}(\d x)$ and $W^{1,p}_\loc(\d x)$,
with $1\le p\le\infty$.

For $x\in\R^d$, let
\[\sign(x):=\begin{cases}\dfrac{x}{\abs{x}},&\;\;\text{if}\;\;x\not=0,\\
                          0,&\;\;\text{if}\;\;x=0.
                         \end{cases}\]
Denote by $\D$ the gradient in the sense of Schwartz distributions. For $x\in\R^d$ and $\rho>0$, set
$B(x,\rho):=\big\{y\in\R^d\;\big\vert\;\abs{x-y}<\rho\big\}$ and
$\overline{B}(x,\rho):=\big\{y\in\R^d\;\big\vert\;\abs{x-y}\le\rho\big\}$.
With a \emph{standard mollifier} we mean a family of functions $\{\eta_\eps\}_{\eps>0}$ such that
\[\eta_\eps(x):=\frac{1}{\eps^d}\eta\left(\frac{x}{\eps}\right),\]
where $\eta\in C_0^\infty(\R^d)$ with $\eta\ge 0$,
$\eta(x)=\eta(\abs{x})$, $\supp\eta\subset\overline{B}(0,1)$ and $\int\eta\,\d x=1$.

\section{Weighted Sobolev spaces}

For all what follows, fix $1\le p<\infty$ and $d\in\{1,2,\ldots\}$. Set $q:=p/(p-1)$.

\begin{defi}
For an a.e.-nonnegative measurable function $f$ on $\R^d$, we define the \emph{regular set}
\[R(f):=\left\{y\in\R^d\;\bigg\vert\;\int_{B(y,\eps)}\frac{1}{f(x)}\,\d x<\infty\;\;\text{for some}\;\eps>0\right\},\]
where we adopt the convention that $1/0:=+\infty$ and $1/+\infty:=0$.

Define also
\[\hat{R}(f):=\left\{y\in\R^d\;\bigg\vert\;\esssup\displaylimits_{x\in B(y,\eps)}\frac{1}{f(x)}<\infty\;\;\text{for some}\;\eps>0\right\}.\]
\end{defi}

Obviously, $R(f)$ is the largest open set $O\subset\R^d$, such that $1/f\in L^1_\loc(O)$.
Also, it always holds that $f>0$ $\d x$-a.e. on $R(f)$. $\hat{R}(f)$ is the largest open set $\hat{O}\subset\R^d$
such that $1/f\in L^\infty_{\loc}(\hat{O})$. By abuse of notation, we denote the regular set for functions $\psi:\mathbb{R}\to\mathbb{R}$ by the same symbol.

Fix a \emph{weight} $w$, that is a measurable function $w\in L^1_\loc(\R^d)$, $w\ge 0$ a.e. Set $\mu(\d x):=w\,\d x$.
Following the notation of \cite{RZ3}, we set $\phi:=w^{1/p}$.

\begin{defi}\label{hamdefi}
Consider the following conditions:
\begin{enumerate}
\item[\up{(\textbf{Ham1})}] For each $i\in\{1,\ldots,d\}$ and for ($(d-1)$-dimensional) Lebesgue a.a. $y\in\{e_i\}^\perp$ it holds that the map
$\psi_y: t\mapsto\phi(y+te_i)$ satisfies $\psi_y^p(t)=0$ for $\d t$-a.e. $t\in\R\setminus R(\psi_y^q)$ if $p\in (1,\infty)$ and satisfies $\psi_y(t)=0$ for $\d t$-a.e. $t\in\R\setminus \hat{R}(\psi_y)$ if $p=1$.
\item[\up{(\textbf{Ham2})}] $\phi^p(x)=0$ for $\d x$-a.e. $x\in\R^d\setminus R(\phi^q)$ if $p\in (1,\infty)$ and $\phi(x)=0$ for
$\d x$-a.e. $x\in \R^d\setminus \hat{R}(\phi)$ if $p=1$.

\end{enumerate}
\end{defi}

Both (\textbf{Ham1}), (\textbf{Ham2})
are called \emph{Hamza's condition} (``on rays'' resp. ``on $\R^d$''), due to \cite{Ham}.

It is straightforward that the following implications hold
\[(\textbf{Reg})\quad\Longrightarrow\quad(\textbf{Ham2})\quad\Longrightarrow\quad(\textbf{Ham1}).\]
Also, if (\textbf{Reg}) holds, $\mu$ and $\d x$ are equivalent measures.

\begin{rem}\label{essinfrem}
Suppose that for $\d x$-a.a. $x\in\{\phi^p>0\}$,
\[\essinf\displaylimits_{y\in B(x,\delta)}\phi^p(y)>0\]
for some $\delta=\delta(x)>0$.
Then \up{(\textbf{Ham2})} holds (and is indeed equivalent to \up{(\textbf{Ham2})} for $p=1$). In particular, \up{(\textbf{Ham2})} holds whenever
$\phi^p\ge 0$ is lower semi-continuous.
\end{rem}

The following lemma is analogous to \cite[Lemma 2.1]{AR}.
\begin{lem}\label{locallyintegrablelem}
Assume that \up{(\textbf{Ham2})} holds. Then for $p\in (1,\infty)$,
\[L^p(\R^d,\mu)\subset L^1_\loc(R(\phi^q),\d x)\]
continuously and for $p=1$
\[L^1(\R^d,\mu)\subset L^1_\loc(\hat{R}(\phi),\d x),\]
continuously.
\end{lem}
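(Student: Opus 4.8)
The plan is to prove a local embedding of the weighted $L^p$-space into an unweighted local $L^1$-space by reducing the $d$-dimensional estimate to a one-dimensional statement on lines parallel to the coordinate axes, exactly as in the analogous unweighted/$L^2$-arguments of Albeverio--R\"ockner. First I would fix a compact set $K\subset R(\phi^q)$ (for $p\in(1,\infty)$; the case $p=1$ is handled in parallel with $\hat R(\phi)$ and the essential supremum replacing the integral). Since $R(\phi^q)$ is open, I may cover $K$ by finitely many open balls $B(y_j,\eps_j)$ on each of which $\int_{B(y_j,\eps_j)}\phi^{-q}\,\d x<\infty$; by a partition-of-unity/finite-subcover argument it suffices to prove the estimate on a single such ball $B=B(y_0,\eps)$, i.e.\ to show
\[
\int_B \abs{u}\,\d x\le C_B\,\norm{u}_{L^p(\mu)}\qquad\text{for all }u\in L^p(\mu),
\]
with $C_B$ depending only on $\int_B\phi^{-q}\,\d x$ (and on $\abs{B}$). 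Then the desired continuous inclusion follows because any $f\in L^1_{\loc}(R(\phi^q))$-target seminorm over $K$ is controlled by finitely many such $C_B\norm{u}_{L^p(\mu)}$.

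The heart of the matter is the single-ball estimate, and for that I would write $u = (u\phi)\cdot\phi^{-1}$ and apply H\"older's inequality with exponents $p$ and $q$:
\[
\int_B\abs{u}\,\d x=\int_B\abs{u}\phi\cdot\phi^{-1}\,\d x\le\left(\int_B\abs{u}^p\phi^p\,\d x\right)^{1/p}\left(\int_B\phi^{-q}\,\d x\right)^{1/q}\le\left(\int_B\phi^{-q}\,\d x\right)^{1/q}\norm{u}_{L^p(\mu)},
\]
using $\phi^p=w$ and $\mu=w\,\d x$. This already gives the claim with $C_B=\bigl(\int_B\phi^{-q}\,\d x\bigr)^{1/q}<\infty$. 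For $p=1$ one instead estimates $\int_B\abs{u}\,\d x\le\bigl(\esssup_B\phi^{-1}\bigr)\int_B\abs{u}\phi\,\d x=\bigl(\esssup_B\phi^{-1}\bigr)\norm{u}_{L^1(\mu)}$, which is finite on balls contained in $\hat R(\phi)$.

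Where does (\textbf{Ham2}) actually enter? It is needed to guarantee that $u\in L^p(\mu)$ is genuinely represented, as a $\d x$-a.e.\ defined function on $R(\phi^q)$, by any representative of its $\mu$-class: outside $R(\phi^q)$ the measures $\mu$ and $\d x$ may be mutually singular, and (\textbf{Ham2}) ensures $w=\phi^p=0$ $\d x$-a.e.\ on $\R^d\setminus R(\phi^q)$, so that on the complement there is nothing to control, while on $R(\phi^q)$ one has $\phi>0$ $\d x$-a.e.\ (noted right after the definition of the regular set), making $\phi^{-1}$ a well-defined finite function $\d x$-a.e.\ there. Thus the main (and essentially only) obstacle is the bookkeeping of $\mu$-classes versus $\d x$-classes: one must check that changing $u$ on a $\mu$-null set does not change $\int_B\abs{u}\,\d x$, which follows since a $\mu$-null subset of $R(\phi^q)$ is $\d x$-null there (again because $\phi>0$ $\d x$-a.e.\ on $R(\phi^q)$). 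Once this identification is in place, the Hölder estimate above is immediate, and continuity of the inclusion is built into the constant $C_B$. I would present the argument for $p\in(1,\infty)$ in full and remark that the $p=1$ case is identical with the obvious modifications.
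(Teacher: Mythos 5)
Your proof is correct and follows essentially the same route as the paper: the single H\"older (resp.\ $\esssup$) estimate $\int_B\abs{u}\,\d x\le\bigl(\int_B\abs{u}^p\phi^p\,\d x\bigr)^{1/p}\bigl(\int_B\phi^{-q}\,\d x\bigr)^{1/q}$ on balls $B\Subset R(\phi^q)$ is exactly the paper's argument, and your extra bookkeeping about $\mu$-classes versus $\d x$-classes (using $\phi>0$ $\d x$-a.e.\ on $R(\phi^q)$) is a welcome clarification rather than a deviation. Only the opening sentence about reducing to lines parallel to the coordinate axes is a red herring --- no such reduction occurs or is needed here (it belongs to the closability Lemma that follows).
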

\begin{proof}
Let $u\in L^p(\R^d,\mu)$ and let $B\Subset R(\phi^q)$ be a ball. By H\"older's inequality, if $p\in (1,\infty)$,
\[\int_B\abs{u}\,\d x\le\left(\int_{R(\phi^q)}\abs{u}^p\,\phi^p\,\d x\right)^{1/p}\cdot\left(\int_B \phi^{-q}\,\d x\right)^{1/q}.\]
$\int_B \phi^{-q}\,\d x$ is finite by (\textbf{Ham2}).
For $p=1$, just observe that for balls $B\Subset\hat{R}(\phi)$
\[\int_B\abs{u}\,\d x\le\left(\int_{\hat{R}(\phi)}\abs{u}\,\phi\,\d x\right)\cdot\left(\esssup\displaylimits_{x\in B}\frac{1}{\phi(x)}\right).\]

\end{proof}

\begin{defi}
Let
\[X:=\left\{u\in C^\infty(\R^d)\;\Big\vert\;\norm{u}_{1,p,\mu}:=\left(\norm{\nabla u}_{L^p(\mu;\R^d)}^p+\norm{u}_{L^p(\mu)}^p\right)^{1/p}<\infty\right\}.\]
Let $H^{1,p}(\mu):=\tilde{X}$ be the abstract completion of $X$ w.r.t. the pre-norm $\norm{\cdot}_{1,p,\mu}$.
\end{defi}

\begin{lem}\label{closlem}
Suppose that \up{(\textbf{Ham1})} holds. Then for all sequences $\{u_n\}\subset C^\infty$ the following condition holds:
\begin{equation}\label{clos}
\begin{split}
&\lim_n\norm{u_n}_{L^p(\mu)}=0\;\text{and}\;\{u_n\}\;\text{is}\;\norm{\nabla\cdot}_{L^p(\mu;\R^d)}\text{-Cauchy}\\
&\qquad\text{always imply}\\
&\lim_n\norm{\nabla u_n}_{L^p(\mu;\R^d)}=0.
\end{split}
\end{equation}
Condition \eqref{clos} is referred to as \emph{closability}.
\end{lem}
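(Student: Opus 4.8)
The plan is to prove the closability condition \eqref{clos} by reducing the $d$-dimensional statement to a one-dimensional statement along coordinate rays, where (\textbf{Ham1}) is precisely the hypothesis we need. First I would use Fubini's theorem to write, for each fixed $i$, the quantities $\norm{u_n}_{L^p(\mu)}^p$ and $\norm{\partial_i u_n}_{L^p(\mu)}^p$ as iterated integrals over $\{e_i\}^\perp$ and then over the line $\R e_i$; thus it suffices to show that for a.e. $y\in\{e_i\}^\perp$, if $t\mapsto\psi_{n,y}(t):=u_n(y+te_i)$ satisfies $\int_\R\abs{\psi_{n,y}}^p\psi_y^p\,\d t\to 0$ (where $\psi_y(t)=\phi(y+te_i)$) and $\{\psi_{n,y}'\}$ is Cauchy in $L^p(\psi_y^p\,\d t)$, then $\int_\R\abs{\psi_{n,y}'}^p\psi_y^p\,\d t\to 0$. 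Passing to a subsequence (diagonalizing over the countably many $i$), one may assume $\psi_{n,y}\to 0$ pointwise $\d t$-a.e. on the set $\{\psi_y>0\}$ and $\psi_{n,y}'\to g_y$ in $L^p(\psi_y^p\,\d t)$ for some limit $g_y$; the goal becomes $g_y=0$ $\psi_y^p\,\d t$-a.e.

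The heart of the argument is the one-dimensional fact: on the open set $R(\psi_y^q)$ (for $p>1$) one has $1/\psi_y\in L^1_\loc$, hence by Hölder $\psi_{n,y}'\in L^1_\loc(R(\psi_y^q))$ with locally uniformly bounded norms, so on each component interval $(a,b)\subset R(\psi_y^q)$ the functions $\psi_{n,y}$ are (after adjusting by constants) uniformly absolutely continuous; combined with pointwise convergence to $0$ on the positivity set, which has full measure inside $R(\psi_y^q)$ by the same local integrability, one gets that $\psi_{n,y}\to 0$ locally uniformly on $R(\psi_y^q)$ and therefore, identifying the $L^1_\loc$-limit of the derivatives, $g_y=0$ a.e. on $R(\psi_y^q)$. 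Off $R(\psi_y^q)$, condition (\textbf{Ham1}) forces $\psi_y^p=0$ $\d t$-a.e., so the integral $\int\abs{g_y}^p\psi_y^p\,\d t$ receives no contribution from $\R\setminus R(\psi_y^q)$ either. Hence $\int_\R\abs{g_y}^p\psi_y^p\,\d t=0$. For $p=1$ the same scheme runs with $\hat R(\psi_y)$ in place of $R(\psi_y^q)$, using $1/\psi_y\in L^\infty_\loc(\hat R(\psi_y))$ to get the uniform Lipschitz-type control.

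Having shown $\partial_i u_n\to 0$ in $L^p(\mu)$ for each $i$ (along the subsequence), I would invoke the standard subsequence principle: the original hypothesis implies $\{\nabla u_n\}$ is Cauchy in $L^p(\mu;\R^d)$, hence converges to some $G\in L^p(\mu;\R^d)$; every subsequence has a further subsequence along which the above argument applies and yields limit $0$, so $G=0$, i.e. $\norm{\nabla u_n}_{L^p(\mu;\R^d)}\to 0$.

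The main obstacle is the one-dimensional interchange of limits: justifying that the $L^p(\psi_y^p\,\d t)$-limit $g_y$ of the derivatives $\psi_{n,y}'$ actually equals the a.e.-derivative of the $L^1_\loc$-limit of the $\psi_{n,y}$ on $R(\psi_y^q)$, and simultaneously controlling the constants of integration so that pointwise convergence $\psi_{n,y}\to0$ propagates. This is where the local integrability of $1/\psi_y$ on $R(\psi_y^q)$ (a consequence of (\textbf{Ham1}) via Lemma \ref{locallyintegrablelem}'s one-dimensional analogue) and Hamza's vanishing condition outside $R(\psi_y^q)$ must be combined carefully, component interval by component interval; the measurability in $y$ of all the objects involved (so that Fubini applies in both directions) is a routine but slightly tedious point to check.
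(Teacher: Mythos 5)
Your proposal is correct and follows essentially the same route as the paper: reduce to one-dimensional sections via Fubini, use the local integrability of $\psi_y^{-q}$ on $R(\psi_y^q)$ (resp. $\psi_y^{-1}\in L^\infty_\loc(\hat{R}(\psi_y))$ for $p=1$) to pass to $L^1_\loc$ convergence along rays, identify the limit of the derivatives as zero there, and invoke (\textbf{Ham1}) to discard the complement. The only cosmetic difference is that the paper identifies that limit by testing against $\eta\in C_0^\infty(I_y)$, i.e.\ as the distributional derivative of the zero function, whereas you use equi-absolute continuity and locally uniform convergence on component intervals; both implementations work.
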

\begin{proof}
We shall consider partial derivatives first. Fix $i\in\{1,\ldots,d\}$.

Let $\{u_n\}\in C^\infty$ such that $\norm{u_n}_{L^p(\mu)}\to 0$ and such that $\{u_n\}$ is $\norm{\partial_i\cdot}_{L^p(\mu)}$-Cauchy. By the Riesz-Fischer theorem, $\{\partial_i u_n\}$
converges to some $v\in L^p(\mu)$. Fix $y\in\{e_i\}^\perp$. Set $\psi_y:t\mapsto \phi(y+te_i)$. By (\textbf{Ham1}) and
Lemma \ref{locallyintegrablelem} for $d=1$, setting $I_y:=R(\psi^q_y)$, if $p\in (1,\infty)$ and $I_y:=\hat{R}(\psi_y)$ if $p=1$,
we conclude that the sequence of maps $\{t\mapsto\partial_i u_n(y+te_i)\}$ converges to $t\mapsto v(y+te_i)$ in $L^1_\loc(I_y)$.
Let $\eta\in C_0^\infty(I_y)$,
\begin{align*}0&=\lim_n\int_{I_y} u_n(y+te_i)\frac{\d}{\d s}\eta(s)\Big\vert_{s=t}\,\d t=-\lim_n\int_{\supp \eta\cap I_y}(\partial_i u_n)(y+te_i) \eta(t)\,\d t\\
&=-\int_{\supp \eta\cap I_y} v(y+t e_i) \eta(t)\,\d t.\end{align*}
We conclude that $v(y+t e_i)=0$ for $\d y$-a.e. $y\in\{e_i\}^\perp$ and $\d t$-a.e $t\in I_y$. By (\textbf{Ham1}) it follows that
$v=0$ $\mu$-a.e. on $\R^d$.

Assume now that $\{u_n\}\in C^\infty$ such that $\norm{u_n}_{L^p(\mu)}\to 0$ and such that $\{u_n\}$ is $\norm{\nabla\cdot}_{L^p(\mu;\R^d)}$-Cauchy.
Clearly each $\{\partial_i u_n\}$ is a Cauchy-sequence in $L^p(\mu)$.
Therefore, for some constant $C=C(p,d)>0$,
\[\int_{\R^d}\abs{\nabla u_n}^p\,\d\mu\le C\sum_{i=1}^d\int_{\R^d}\abs{\partial_i u_n}^p\,\d\mu\to 0,\]
as $n\to\infty$ by the arguments above.
\end{proof}

\begin{prop}
Assume \up{(\textbf{Ham1})}. Then $H^{1,p}(\mu)$ is a space of $\mu$-classes of functions and
is continuously embedded into $L^p(\mu)$. Also, $H^{1,p}(\mu)$ is separable and reflexive whenever $p\in (1,\infty)$.
\end{prop}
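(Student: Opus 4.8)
The plan is to identify the abstract completion $H^{1,p}(\mu)=\tilde X$ with a closed linear subspace of the Banach space $Y:=L^p(\mu)\times L^p(\mu;\R^d)$, endowed with the norm $\norm{(f,g)}_Y:=(\norm{f}_{L^p(\mu)}^p+\norm{g}_{L^p(\mu;\R^d)}^p)^{1/p}$, by means of the ``gradient graph'' map. First I would introduce the linear map $j\from X\to Y$, $u\mapsto(u,\nabla u)$, where $u$ and $\nabla u$ are read as $\mu$-classes; by the very definition of the pre-norm $\norm{\cdot}_{1,p,\mu}$, the map $j$ is an isometry of the pre-normed space $(X,\norm{\cdot}_{1,p,\mu})$ onto a (in general non-closed) subspace of $Y$. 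Since $Y$ is complete, the universal property of the completion produces a unique isometric extension $\iota\from H^{1,p}(\mu)\to Y$ with image $\overline{j(X)}$; in particular $\iota$ is injective and $\iota(H^{1,p}(\mu))$ is a closed subspace of $Y$.

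The key step is to deduce from the closability Lemma \ref{closlem} that $\pi_1\circ\iota$ is injective, where $\pi_i$ denotes the $i$-th coordinate projection of $Y$. If $\xi\in H^{1,p}(\mu)$ satisfies $\pi_1\iota(\xi)=0$, choose $\{u_n\}\subset X$ with $j(u_n)\to\iota(\xi)$ in $Y$; then $\norm{u_n}_{L^p(\mu)}\to 0$ and $\{\nabla u_n\}$ is Cauchy in $L^p(\mu;\R^d)$, so \eqref{clos} forces $\norm{\nabla u_n}_{L^p(\mu;\R^d)}\to 0$, i.e.\ $\iota(\xi)=0$, whence $\xi=0$ since $\iota$ is injective. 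Thus $\pi_1\circ\iota\from H^{1,p}(\mu)\to L^p(\mu)$ is a norm-nonincreasing linear injection, which is precisely the assertion that $H^{1,p}(\mu)$ is a space of $\mu$-classes of functions continuously embedded in $L^p(\mu)$, the associated gradient operator being $\pi_2\circ\iota$.

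For the remaining structural statements I would argue entirely at the level of $Y$. Since $w\in L^1_\loc(\R^d)$, the measure $\mu$ is $\sigma$-finite, so $L^p(\mu)$ and $L^p(\mu;\R^d)$ are separable for every $p\in[1,\infty)$; hence $Y$ is separable, and so is its subspace $\iota(H^{1,p}(\mu))\cong H^{1,p}(\mu)$. For $p\in(1,\infty)$ the factors $L^p(\mu)$ and $L^p(\mu;\R^d)$ are moreover reflexive (indeed uniformly convex), the finite product $Y$ inherits reflexivity, and a closed subspace of a reflexive Banach space is reflexive; this yields reflexivity of $H^{1,p}(\mu)$.

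I expect the only genuinely non-formal ingredient to be the injectivity of $\pi_1\circ\iota$, i.e.\ the passage from the abstract completion to an honest space of functions: this is exactly where the hypothesis (\textbf{Ham1}) enters, through the closability condition \eqref{clos} of Lemma \ref{closlem}. Everything else reduces to standard facts about isometric embeddings into, and closed subspaces of, finite products of $L^p$-spaces.
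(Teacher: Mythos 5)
Your argument is correct and is exactly the standard route the paper alludes to with its one-line proof (``similar arguments as in the unweighted case''): you realize the abstract completion as the closure of the graph $\{(u,\nabla u)\}$ in $L^p(\mu)\times L^p(\mu;\R^d)$ and use the closability condition \eqref{clos} from Lemma \ref{closlem} (which is where (\textbf{Ham1}) enters) to see that the first projection is injective, so that $H^{1,p}(\mu)$ is an honest function space continuously embedded in $L^p(\mu)$. The separability and reflexivity statements then follow, as you say, from the corresponding properties of closed subspaces of finite products of $L^p$-spaces, so your write-up simply supplies the details the paper omits.
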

\begin{proof}
The proof works by similar arguments as in the unweighted case.
\end{proof}

Denote the (class of the) gradient of an element $u\in H^{1,p}(\mu)$ by $\nabla^\mu u$.

\begin{prop}\label{c0prop}
Assume \up{(\textbf{Ham1})}. The $\mu$-classes of $C_0^\infty(\R^d)$ functions are dense in $H^{1,p}(\mu)$.
\end{prop}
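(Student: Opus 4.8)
The plan is to deduce the statement from the definition of $H^{1,p}(\mu)$ together with a standard cut-off argument. Since $H^{1,p}(\mu)=\tilde X$ is by construction the abstract completion of $X$, and, by \up{(\textbf{Ham1})} and the preceding proposition, a genuine space of $\mu$-classes into which (the $\mu$-classes of) $X$ embed densely, it suffices to show that every $u\in X$ lies in the $\norm{\cdot}_{1,p,\mu}$-closure of the $\mu$-classes of $C_0^\infty(\R^d)$.

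So fix $u\in X$. Pick a fixed $\chi\in C_0^\infty(\R^d)$ with $0\le\chi\le 1$, $\chi\equiv 1$ on $B(0,1)$ and $\supp\chi\subset B(0,2)$, and set $\chi_R(x):=\chi(x/R)$ for $R\ge 1$; then $0\le\chi_R\le 1$, $\chi_R\equiv 1$ on $B(0,R)$, $\supp\chi_R\subset B(0,2R)$ and $\norm{\nabla\chi_R}_\infty\le C/R$ with $C:=\norm{\nabla\chi}_\infty$. Put $u_R:=\chi_R u$; this is a $C^\infty$ function with compact support, hence $u_R\in C_0^\infty(\R^d)$, and $\nabla u_R=\chi_R\nabla u+u\nabla\chi_R$ in the classical sense. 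Consequently
\[
\norm{u_R-u}_{1,p,\mu}^p\le\norm{(\chi_R-1)u}_{L^p(\mu)}^p+2^{p-1}\norm{(\chi_R-1)\nabla u}_{L^p(\mu;\R^d)}^p+2^{p-1}\norm{u\,\nabla\chi_R}_{L^p(\mu;\R^d)}^p .
\]

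Now I would let $R\to\infty$. In the first two terms the integrands are dominated by $\abs{u}^p$ and $\abs{\nabla u}^p$ respectively --- both in $L^1(\mu)$ because $u\in X$ --- and converge pointwise to $0$ since $\chi_R\to 1$ pointwise, so these terms vanish by dominated convergence. For the last term,
\[
\norm{u\,\nabla\chi_R}_{L^p(\mu;\R^d)}^p\le\frac{C^p}{R^p}\int_{\{R\le\abs{x}\le 2R\}}\abs{u}^p\,\d\mu\le C^p\int_{\{\abs{x}\ge R\}}\abs{u}^p\,\d\mu ,
\]
using $R\ge 1$, and the right-hand side tends to $0$ as $R\to\infty$ since $\abs{u}^p\in L^1(\mu)$. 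Hence $u_R\to u$ in $H^{1,p}(\mu)$, which proves the claim.

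The argument involves no real obstacle; the only point needing a little care is the term $u\,\nabla\chi_R$, whose vanishing relies on the $O(1/R)$ decay of $\abs{\nabla\chi_R}$ compensating the possibly large $\mu$-mass of the annulus $\{R\le\abs{x}\le 2R\}$, via $\abs{u}^p\in L^1(\mu)$. Note that \up{(\textbf{Ham1})} enters only to ensure that $H^{1,p}(\mu)$ is a function space (so that ``$C_0^\infty$ dense'' is a meaningful assertion); no further property of the weight $w$ is used.
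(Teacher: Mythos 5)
Your proof is correct and is essentially the paper's argument: the paper disposes of this proposition by citing a ``standard localization argument'' (partition of unity, as in \cite[Theorem 1.27]{HKM}), and your single-cutoff version $u_R=\chi_R u$ is precisely that argument specialized to $\Omega=\R^d$, with the only delicate term $u\,\nabla\chi_R$ handled correctly via the $O(1/R)$ bound and dominated convergence. Your closing remark that \up{(\textbf{Ham1})} is used only to make $H^{1,p}(\mu)$ a space of $\mu$-classes (via closability) is also accurate.
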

\begin{proof}
The proof is a standard localization argument using partition of unity, see e.g. \cite[Theorem 1.27]{HKM}.
\end{proof}

\subsection{Integration by parts}\label{ibpsubsec}

We follow the approach of Albeverio, Kusuoka and R\"ockner \cite{AKR}, which is to define a weighted
Sobolev space via an integration by parts formula. Recall that $w=\phi^p$. A function $f\in L^p(\mu)$
might fail to be a Schwartz distribution. Instead, consider $f\phi^p=(f\phi)\phi^{p-1}$, which is in $L^1_\loc(\d x)$
by H\"older's inequality and therefore $\D(f\phi^p)$ is well defined.
For $f\in C_0^\infty$, the Leibniz formula yields
\begin{equation}\label{Leibeq}(\nabla f)\phi^p=\D(f\phi^p)-pf\frac{\D\phi}{\phi}\phi^p,\end{equation}
which motivates the definition of the \emph{logarithmic derivative} of $\mu$:
\[\beta:=p\frac{\D\phi}{\phi},\]
where we set $\beta\equiv 0$ on $\{\phi=0\}$.
The name arises from the (solely formal) identity $\beta=\nabla(\log(\phi^p))$.

\begin{lem}\label{pqlem}
Condition \up{(\textbf{Diff})} implies $\phi^p\in W^{1,1}_\loc(\d x)$ and
\begin{equation}\label{bbeq}\beta=p\frac{\nabla\phi}{\phi}=\frac{\nabla(\phi^p)}{\phi^p},\end{equation}
where $\nabla$ denotes the usual weak gradient.

Moreover, $\beta\in L^p_\loc(\mu;\R^d)$ and, if $p\in(1,\infty)$, $\abs{\nabla\phi}\phi^{p-2}\in L^q_\loc$.
\end{lem}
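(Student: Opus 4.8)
The plan is to unwind the definitions carefully and reduce everything to the chain rule and product rule for Sobolev functions, keeping track of the (potentially troublesome) set $\{\phi=0\}$. First I would record the elementary facts: by (\textbf{Diff}) we have $\phi\in W^{1,p}_\loc(\d x)\subset W^{1,1}_\loc(\d x)$, so $\phi$ has a weak gradient $\nabla\phi\in L^p_\loc(\d x;\R^d)$, and $\nabla\phi=0$ a.e. on the level set $\{\phi=c\}$ for every constant $c$, in particular a.e. on $\{\phi=0\}$ (this is the standard fact, e.g. \cite[\S 4.2.2, Theorem 4]{EG}, that weak derivatives vanish a.e. on level sets). This already makes the convention $\beta\equiv 0$ on $\{\phi=0\}$ consistent with the formula $\beta=p\nabla\phi/\phi$: on $\{\phi>0\}$ the quotient is well-defined (as an a.e.-defined measurable function), on $\{\phi=0\}$ both sides are zero.

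Next I would prove $\phi^p\in W^{1,1}_\loc(\d x)$ together with \eqref{bbeq}. For $p=1$ there is nothing to do. For $p\in(1,\infty)$, the map $s\mapsto s^p$ is $C^1$ on $[0,\infty)$ with derivative $ps^{p-1}$, but its derivative is unbounded, so I cannot directly invoke the Lipschitz chain rule; instead I would truncate. Apply the chain rule to $\Phi_\delta(s):=(s+\delta)^p$ (or to $s\mapsto (\min(s,M))^p$), which is $C^1$ with bounded derivative on the relevant range, obtaining $\nabla\Phi_\delta(\phi)=p(\phi+\delta)^{p-1}\nabla\phi$ weakly; then let $\delta\downarrow 0$ (resp.\ $M\uparrow\infty$) and pass to the limit in the distributional identity $\int \Phi_\delta(\phi)\,\partial_i\eta = -\int p(\phi+\delta)^{p-1}\partial_i\phi\,\eta$. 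Dominated convergence on the left is immediate; on the right I need $p(\phi+\delta)^{p-1}\nabla\phi \to p\phi^{p-1}\nabla\phi$ in $L^1_\loc$, and here is the one genuine point requiring (\textbf{Diff}) rather than just $\phi\in W^{1,p}_\loc$: on $\{\phi>0\}$ I write $p\phi^{p-1}\nabla\phi = \beta\,\phi^p$ and estimate $\abs{\beta}\phi^p = \abs{\beta}\phi^{p-1}\cdot\phi \le$ (via $\beta\in L^q_\loc(\mu)=L^q_\loc(\phi^p\,\d x)$ and $\phi\in L^p_\loc$) something locally integrable by H\"older with exponents $q,p$; more directly, $\abs{\beta}\,\phi^p \in L^1_\loc$ because $\beta\in L^q_\loc(\mu;\R^d)$ means $\int_K \abs{\beta}^q\phi^p\,\d x<\infty$, and combined with $\phi^p\in L^1_\loc$ one gets $\abs{\beta}\phi^p\in L^1_\loc$ by H\"older ($\int\abs{\beta}\phi^p = \int \abs{\beta}\phi^{p/q}\cdot\phi^{p/p}\le (\int\abs{\beta}^q\phi^p)^{1/q}(\int\phi^p)^{1/p}$). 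This provides the $L^1_\loc$ domination needed to pass to the limit and conclude $\nabla(\phi^p) = p\phi^{p-1}\nabla\phi = \beta\phi^p$ in $W^{1,1}_\loc$, which is exactly \eqref{bbeq}.

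Finally, the two integrability assertions. For $\beta\in L^p_\loc(\mu;\R^d)$: since $q\ge p$ would be false in general (indeed $q\le p$ iff $p\ge 2$), I instead argue that $\beta\in L^q_\loc(\mu)$ always gives $\beta\in L^p_\loc(\mu)$ only when $q\ge p$; for the general statement note the support of $\mu$ is where $\phi>0$ and on a fixed compact $K$ we only know $\int_K\abs{\beta}^q\,\d\mu<\infty$. Hmm — so for $p<2$ this needs $\mu(K)<\infty$ (true, $w\in L^1_\loc$) plus H\"older with exponents $q/p>1$ and its conjugate: $\int_K\abs{\beta}^p\,\d\mu \le (\int_K\abs{\beta}^q\,\d\mu)^{p/q}\,\mu(K)^{1-p/q}<\infty$; for $p\ge 2$, $q\le p$ and still by H\"older (now with the roles reversed, exponent $p/q\ge 1$... no) — in fact the clean statement is: for \emph{any} $1\le p<\infty$, $L^q_\loc(\mu)\subset L^{\min(p,q)}_\loc(\mu)\subset\dots$; since $\mu$ is locally finite, $L^r_\loc(\mu)$ is decreasing in $r$, and $p\le q$ precisely when $p\le 2$ while $p\ge q$ when $p\ge 2$, so in the case $p\le 2$ we have $p\le q$ hence $L^q_\loc(\mu)\subset L^p_\loc(\mu)$ trivially, and in the case $p\ge 2$ we apply the finite-measure H\"older inequality as above. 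For $\abs{\nabla\phi}\phi^{p-2}\in L^q_\loc(\d x)$ (when $p\in(1,\infty)$): on $\{\phi>0\}$ write $\abs{\nabla\phi}\phi^{p-2} = \tfrac1p\abs{\beta}\phi^{p-1} = \tfrac1p\abs{\beta}\,\phi^{p/q}$, so $\int_K (\abs{\nabla\phi}\phi^{p-2})^q\,\d x = p^{-q}\int_K\abs{\beta}^q\phi^p\,\d x = p^{-q}\int_K\abs{\beta}^q\,\d\mu<\infty$ by (\textbf{Diff}); on $\{\phi=0\}$ the weak gradient $\nabla\phi$ vanishes a.e., so the product is a.e.\ $0$ there (interpreting $0\cdot\phi^{p-2}$ as $0$), and the claim follows. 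The main obstacle is the limiting argument in the second step — ensuring the truncated chain rule passes to the limit requires precisely the $L^q_\loc(\mu)$-integrability of $\beta$ from (\textbf{Diff}), and one must be careful that all manipulations involving $\phi^{p-1}$, $\phi^{p-2}$ are done on $\{\phi>0\}$ with the vanishing of $\nabla\phi$ on $\{\phi=0\}$ handling the rest; everything else is routine H\"older bookkeeping with the locally finite measure $\mu$.
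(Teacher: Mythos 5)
Your overall strategy is sound and close to the paper's: the paper establishes $\nabla(\phi^p)=p\phi^{p-1}\nabla\phi$ by mollifying ($\phi_\eps=\eta_\eps\ast\phi$, classical chain rule, then passage to the limit in $L^1_\loc$ using $\phi^{p-1}\in L^q_\loc$ and $\nabla\phi\in L^p_\loc$), whereas you truncate the nonlinearity; both routes work, and in both cases the required $L^1_\loc$ domination $\phi^{p-1}\abs{\nabla\phi}\in L^1_\loc$ comes from H\"older with $\phi^{p-1}\in L^q_\loc$ (equivalently $w=\phi^p\in L^1_\loc$) and $\nabla\phi\in L^p_\loc$ --- so, contrary to your remark, this step uses only $\phi\in W^{1,p}_\loc(\d x)$ and the local integrability of the weight, not the $\beta\in L^q_\loc(\mu;\R^d)$ half of (\textbf{Diff}). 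Your treatment of $\{\phi=0\}$ via the a.e.\ vanishing of $\nabla\phi$ there, and your computation showing $\abs{\nabla\phi}\phi^{p-2}\in L^q_\loc$, are both correct and match the paper.

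The genuine gap is your argument for $\beta\in L^p_\loc(\mu;\R^d)$. You try to deduce it from $\beta\in L^q_\loc(\mu;\R^d)$ by the finite-measure H\"older inequality; as you yourself notice mid-argument, this yields $L^q_\loc(\mu)\subset L^p_\loc(\mu)$ only when $p\le q$, i.e.\ only for $p\le 2$, and your closing sentence (``in the case $p\ge 2$ we apply the finite-measure H\"older inequality as above'') asserts exactly the inclusion that fails: for $p>2$ one has $q<p$, and on a locally finite measure no H\"older argument upgrades $L^q$-integrability to $L^p$-integrability. Indeed the claim cannot be extracted from $\beta\in L^q_\loc(\mu;\R^d)$ at all; it comes from the other half of (\textbf{Diff}). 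The correct one-line argument --- which is what the paper's ``is clear'' refers to --- is the a.e.\ identity $\abs{\beta}^p\phi^p=p^p\abs{\nabla\phi}^p$ (on $\{\phi>0\}$ by the definition of $\beta$, on $\{\phi=0\}$ because both sides vanish), whence $\int_K\abs{\beta}^p\,\d\mu=p^p\int_{K\cap\{\phi>0\}}\abs{\nabla\phi}^p\,\d x<\infty$ for every compact $K$, since $\nabla\phi\in L^p_\loc(\d x;\R^d)$. With this replacement your proof is complete.
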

\begin{proof}
For $p=1$, the claim follows from \up{(\textbf{Diff})}.
Assume \up{(\textbf{Diff})} and that $p\in (1,\infty)$. $\phi^p\in L^1_\loc$ is clear.
We claim that
\begin{equation}\label{ppeq}\nabla(\phi^p)=p\phi^{p-1}\nabla\phi.\end{equation}
Let $\phi_\eps:=\eta_\eps\ast\phi$, where $\{\eta_\eps\}$ is a standard mollifier.
It follows from the classical chain rule that for all $\eps>0$
\[\nabla((\phi_\eps)^p)=p\phi_\eps^{p-1}\nabla\phi_\eps.\]
Since $\phi^{p-1}\in L^q_\loc$ and $\nabla\phi\in L^p_\loc$, we can pass to the limit
in $L^1_\loc$ and get that $\phi^p\in W^{1,1}_\loc(\d x)$.
\eqref{ppeq} follows now from the uniqueness of the gradient in $W^{1,1}_\loc(\d x)$.
The first equality in \eqref{bbeq} is clear. The second follows from \eqref{ppeq}. $\beta\in L^p_\loc(\mu;\R^d)$
is clear.
The last equality follows from (\textbf{Diff}) by
\[\lrabs{\frac{\nabla\phi}{\phi}}^{q}\phi^p=\left(\abs{\nabla\phi}\phi^{p-2}\right)^q.\]
\end{proof}

\begin{lem}\label{Wqlem}
Assume \up{(\textbf{Diff})} and that $p\in (1,\infty)$. Then $\phi^{p-1}\in W^{1,q}_\loc(\d x)$. Also,
\[\nabla(\phi^{p-1})=(p-1)\phi^{p-2}\nabla\phi.\]
\end{lem}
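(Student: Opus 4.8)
The plan is to mimic the proof of Lemma~\ref{pqlem}, but now raising $\phi$ to the power $p-1$ rather than $p$. First I would note that $\phi^{p-1}\in L^q_\loc(\d x)$: indeed, $(\phi^{p-1})^q=\phi^{(p-1)q}=\phi^p\in L^1_\loc(\d x)$ by \up{(\textbf{Diff})}. So it remains only to identify the weak gradient and check it lies in $L^q_\loc$.

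For the identification, I would mollify: set $\phi_\eps:=\eta_\eps\ast\phi$ with $\{\eta_\eps\}$ a standard mollifier, so that $\phi_\eps\in C^\infty$ and, by the classical chain rule,
\[\nabla\bigl((\phi_\eps)^{p-1}\bigr)=(p-1)\phi_\eps^{p-2}\nabla\phi_\eps\]
on any fixed relatively compact open set, at least away from the zero set of $\phi_\eps$ when $1<p<2$; to avoid worrying about differentiability of $t\mapsto t^{p-1}$ at the origin for $p<2$ one can instead apply the chain rule to $(\phi_\eps+\delta)^{p-1}$ and let $\delta\downarrow 0$, or simply invoke the standard weak chain rule for Sobolev functions composed with a $C^1$ function having bounded derivative on the relevant range. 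Then I would pass to the limit $\eps\downarrow0$ in $L^1_\loc(\d x)$: on one side $(\phi_\eps)^{p-1}\to\phi^{p-1}$ in $L^1_\loc$ (using $\phi_\eps\to\phi$ in $L^p_\loc$ and continuity of $t\mapsto t^{p-1}$ with the growth bound $|t|^{p-1}$), and on the other side $\phi_\eps^{p-2}\nabla\phi_\eps\to\phi^{p-2}\nabla\phi$ in $L^1_\loc$. The latter convergence is the technical heart: one writes $\phi_\eps^{p-2}\nabla\phi_\eps-\phi^{p-2}\nabla\phi=(\phi_\eps^{p-2}-\phi^{p-2})\nabla\phi_\eps+\phi^{p-2}(\nabla\phi_\eps-\nabla\phi)$ and estimates each term by H\"older with exponents $q$ and $p$; the first term uses $\phi_\eps^{p-2}\to\phi^{p-2}$ a.e.\ along a subsequence together with the bound $\phi^{p-2}|\nabla\phi|\in L^q_\loc$ already furnished by Lemma~\ref{pqlem} (since $\phi^{p-2}\nabla\phi=\frac1p\beta\phi^{p-1}$ and $(\phi^{p-2}|\nabla\phi|)^q=|\nabla\phi/\phi|^q\phi^p$), plus a dominated-convergence / Vitali argument, while the second uses $\nabla\phi_\eps\to\nabla\phi$ in $L^p_\loc$ and $\phi^{p-2}\in L^q_\loc$ — wait, this needs care when $p<2$, since then $p-2<0$ and $\phi^{p-2}$ may blow up where $\phi$ vanishes; there one instead groups as $\phi^{p-2}\nabla\phi_\eps\to\phi^{p-2}\nabla\phi$ directly, noting $\{\phi=0\}$ carries no mass for the limiting object because $|\nabla\phi|=0$ a.e.\ on $\{\phi=0\}$ (standard fact for weakly differentiable functions). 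In all cases the upshot is $\phi_\eps^{p-2}\nabla\phi_\eps\to\phi^{p-2}\nabla\phi$ in $L^1_\loc(\d x)$, so $\nabla\phi_\eps^{p-1}$ converges in $L^1_\loc$ and, by uniqueness of weak limits of gradients in $W^{1,1}_\loc(\d x)$, we conclude $\phi^{p-1}\in W^{1,1}_\loc(\d x)$ with $\nabla(\phi^{p-1})=(p-1)\phi^{p-2}\nabla\phi$.

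Finally, to upgrade the membership from $W^{1,1}_\loc$ to $W^{1,q}_\loc$, I would observe that the gradient just identified satisfies $|\nabla(\phi^{p-1})|=(p-1)\phi^{p-2}|\nabla\phi|$, and
\[\bigl((p-1)\phi^{p-2}|\nabla\phi|\bigr)^q=(p-1)^q\,\biggl|\frac{\nabla\phi}{\phi}\biggr|^q\phi^p\in L^1_\loc(\d x),\]
again by \up{(\textbf{Diff})} exactly as in the last display of the proof of Lemma~\ref{pqlem}. Together with $\phi^{p-1}\in L^q_\loc(\d x)$ noted above, this gives $\phi^{p-1}\in W^{1,q}_\loc(\d x)$.

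The main obstacle is the mollifier limit $\phi_\eps^{p-2}\nabla\phi_\eps\to\phi^{p-2}\nabla\phi$ in $L^1_\loc$, and in particular handling the singularity of $t\mapsto t^{p-2}$ at $t=0$ in the range $1<p<2$; the clean way around it is to work on $\{\phi>0\}$ (where everything is smooth in the exponent) and separately account for $\{\phi=0\}$ using $|\nabla\phi|=0$ a.e.\ there, combined with a Vitali-type equi-integrability argument powered by the a priori bound $|\nabla\phi/\phi|^q\phi^p\in L^1_\loc$ from Lemma~\ref{pqlem}. Everything else is routine bookkeeping with H\"older's inequality and uniqueness of weak gradients.
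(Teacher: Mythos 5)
Your mollification strategy works for $p\ge 2$, but for $1<p<2$ it has a genuine gap at exactly the point you flag as "the technical heart'': the convergence $\phi_\eps^{p-2}\nabla\phi_\eps\to\phi^{p-2}\nabla\phi$ in $L^1_\loc$. The only a priori bound available, $\abs{\phi^{p-2}\nabla\phi}\in L^q_\loc$ from Lemma \ref{pqlem}, controls the \emph{un-mollified} quantity; it gives no uniform-in-$\eps$ domination or equi-integrability for the family $\{\phi_\eps^{p-2}\nabla\phi_\eps\}$, because mollification does not commute with the singular nonlinearity $t\mapsto t^{p-2}$ and $\phi_\eps$ can be much smaller than $\phi$ pointwise. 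Your proposed fixes do not close this: the observation that $\nabla\phi=0$ a.e.\ on $\{\phi=0\}$ says nothing about the region where $\phi$ is small but positive, and the $(\phi_\eps+\delta)^{p-1}$ regularization only shifts the problem, since passing $\delta\downarrow 0$ requires dominating $(\phi_\eps+\delta)^{p-2}\abs{\nabla\phi_\eps}$ by $\phi_\eps^{p-2}\abs{\nabla\phi_\eps}$ --- the very quantity whose local integrability is in question. The second term of your splitting is also problematic for $p<2$: $\phi^{p-2}\notin L^q_\loc$ in general (that would require local integrability of a negative power of $\phi$ not guaranteed by (\textbf{Diff})), so H\"older does not apply to $\phi^{p-2}(\nabla\phi_\eps-\nabla\phi)$.

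The paper avoids all of this by truncating the \emph{nonlinearity} rather than mollifying $\phi$: it sets $\psi_N(t):=(\abs{t}\vee N^{-1}\wedge N)^{p-1}$, which is Lipschitz, so the chain rule for Sobolev functions \cite[Theorem 2.1.11]{Ziem} applies directly to $\phi\in W^{1,p}_\loc(\d x)$ and gives $\partial_i\psi_N(\phi)=(p-1)1_{\{N^{-1}\le\phi\le N\}}\phi^{p-2}\partial_i\phi$ with no division-by-zero issue. The passage $N\to\infty$ is then a clean dominated-convergence argument, with dominating function precisely $\abs{\phi^{p-2}\partial_i\phi}\in L^q_\loc$ from Lemma \ref{pqlem}. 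Your final step (upgrading to $W^{1,q}_\loc$ via $(\phi^{p-2}\abs{\nabla\phi})^q=\abs{\nabla\phi/\phi}^q\phi^p$ and $(\phi^{p-1})^q=\phi^p$) is correct and is essentially what the paper does; it is the identification of the weak gradient for $1<p<2$ that needs the Lipschitz-truncation device rather than mollification.
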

\begin{proof}
Fix $1\le i\le d$. For $N\in\N$, define $\psi_N:\R\to\R$ by $\psi_N(t):=(\abs{t}\vee N^{-1}\wedge N)^{p-1}$.
Clearly, $\psi_N$ is a Lipschitz function.
By the chain rule for Sobolev functions \cite[Theorem 2.1.11]{Ziem},
\[\partial_i\psi_N(\phi)=(p-1)1_{\{N^{-1}\le\phi\le N\}}\frac{\phi^{p-1}}{\phi}\partial_i\phi.\]
We have that $\psi_N(\phi)\to\phi^{p-1}$ $\d x$-a.s. as $N\to\infty$. Also,
\[\abs{\psi_N(\phi)}^q\le\abs{(\phi\vee N^{-1})^p}\le C\abs{\phi}^p+C\in L^1_\loc.\]
Furthermore, by Lemma \ref{pqlem},
\[\lrabs{1_{\{N^{-1}\le\phi\le N\}}\frac{\phi^{p-1}}{\phi}\partial_i\phi}\le\abs{\phi^{p-2}\partial_i\phi}\in L^q_\loc.\]
Hence by Lebesgue's dominated convergence theorem, $\psi_N(\phi)\to\phi^{p-1}$ in $L^q_\loc$
and $\partial_i\psi_N(\phi)\to(p-1)\phi^{p-2}\partial_i\phi$ in $L^q_\loc$.
The claim is proved.
\end{proof}

\begin{lem}\label{wabscontlem}
Fix $1\le i\le d$.
Suppose that \up{(\textbf{Diff})} holds. Then there is a version $\widetilde{\phi^p}$ of $\phi^p$, such that for $y\in\{e_i\}^\perp$ the
map $\widetilde{\psi^p_y}:t\mapsto\widetilde{\phi^p}(y+te_i)$
is absolutely continuous for almost all $y\in\{e_i\}^\perp$. Furthermore, for almost all $y\in\{e_i\}^\perp$, setting $\psi_y:t\mapsto \phi(y+te_i)$,
\[\R\setminus R(\psi^q_y)\supset\{t\in\R\,\vert\,\widetilde{\psi^p_y}(t)=0\},\]
if $p\in (1,\infty)$ and
\[\R\setminus \hat{R}(\psi_y)\supset\{t\in\R\,\vert\,\widetilde{\psi^1_y}(t)=0\}\]
if $p=1$.
Recall that in both cases the $\d t$-almost sure inclusion ``$\,\subset\,$'' holds automatically.
\end{lem}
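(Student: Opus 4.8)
The plan is to produce the version $\widetilde{\phi^p}$ by exploiting Lemma~\ref{pqlem}, which tells us that $\phi^p\in W^{1,1}_\loc(\d x)$, and then invoke the classical characterization of $W^{1,1}_\loc$ functions via absolute continuity on almost all lines parallel to the coordinate axes (the ACL-characterization, see e.g.\ \cite[\S4.9.2]{EG} or \cite[Theorem~2.1.4]{Ziem}). Fixing $i$, this gives a Borel version $\widetilde{\phi^p}$ of $\phi^p$ such that for $\d y$-almost all $y\in\{e_i\}^\perp$ the map $\widetilde{\psi^p_y}:t\mapsto\widetilde{\phi^p}(y+te_i)$ is locally absolutely continuous (and its a.e.\ derivative agrees with the $i$-th weak partial derivative of $\phi^p$ along the line). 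This takes care of the first assertion.

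For the inclusion statement, the key point is to relate the zero set of $\widetilde{\psi^p_y}$ to the failure of local integrability of $\psi_y^{-q}$ (resp.\ local essential boundedness of $\psi_y^{-1}$ when $p=1$). First I would fix a ``good'' $y$: one for which $\widetilde{\psi^p_y}$ is locally absolutely continuous \emph{and} agrees $\d t$-a.e.\ with $\psi^p_y$; Fubini guarantees that $\d y$-almost every $y$ is good. Take $t_0$ with $\widetilde{\psi^p_y}(t_0)=0$, i.e.\ $\phi(y+t_0 e_i)=0$ in the sense of the chosen version. By absolute continuity, for $t$ near $t_0$ one has $\widetilde{\psi^p_y}(t)=\int_{t_0}^t (\widetilde{\psi^p_y})'(s)\,\d s$, and the integrand is controlled, via Lemma~\ref{pqlem} and the representation $\nabla(\phi^p)=p\phi^{p-1}\nabla\phi$, by something like $p\,\psi_y^{p-1}(s)\,|\psi_y'(s)|$ where (again along a.e.\ line, using the ACL characterization of $\phi\in W^{1,p}_\loc$) $\psi_y$ is itself absolutely continuous. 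The upshot is a pointwise bound showing $\widetilde{\psi^p_y}(t)=O(|t-t_0|)$ near $t_0$, or more precisely that $\widetilde{\psi^p_y}$ vanishes to an order forcing $\psi_y^{-q}$ to be non-integrable in every neighbourhood of $t_0$: heuristically, if $\phi^p\sim c|t-t_0|$ then $\phi^{-q}\sim c^{-q/p}|t-t_0|^{-q/p}$ and $q/p = 1/(p-1)\ge\ldots$; one checks the exponent always lands in the non-integrable range (with the endpoint $p=1$ handled separately, where vanishing of $\psi_y$ at $t_0$ directly defeats $\psi_y^{-1}\in L^\infty_\loc$ near $t_0$). Hence $t_0\notin R(\psi_y^q)$ (resp.\ $t_0\notin\hat R(\psi_y)$), which is exactly the claimed inclusion.

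I expect the main obstacle to be the careful bookkeeping needed to simultaneously use the ACL-representatives of both $\phi$ (from $\phi\in W^{1,p}_\loc$, part of (\textbf{Diff})) and $\phi^p$ (from Lemma~\ref{pqlem}), and to verify that on a common set of full $\d y$-measure the two representatives are compatible along each line, so that the Leibniz-type identity $\nabla(\phi^p)=p\phi^{p-1}\nabla\phi$ can legitimately be read as an a.e.\ identity \emph{on each such line} rather than merely $\d x$-a.e.\ on $\R^d$. Once that is in place, the quantitative vanishing estimate and the resulting failure of integrability near a zero of $\widetilde{\psi^p_y}$ is a short computation, and the final reminder that the reverse inclusion ``$\subset$'' up to $\d t$-null sets is automatic just restates the definition of $R(\cdot)$ together with the fact that $\phi>0$ a.e.\ on $R(\phi^q)$.
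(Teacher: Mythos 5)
Your first part is fine and coincides with the paper's route: Lemma \ref{pqlem} gives $\phi^p\in W^{1,1}_\loc(\d x)$ and the ACL characterization (Nikod\'ym's theorem) produces the line-wise absolutely continuous version. The genuine gap is in the second part, in the quantitative step you gloss over. From the identity $(\widetilde{\psi^p_y})'=p\,\psi_y^{p-1}\psi_y'$ and absolute continuity you only know that the integrand is in $L^1_\loc(\d t)$, which yields $\widetilde{\psi^p_y}(t)=o(1)$ near a zero $t_0$, \emph{not} the rate $O(\abs{t-t_0})$ you assert. Worse, even if that linear rate held it would not close the argument: $\widetilde{\psi^p_y}(t)\le c\abs{t-t_0}$ only gives $\psi_y^{-q}(t)\ge c^{-q/p}\abs{t-t_0}^{-q/p}$ with $q/p=1/(p-1)$, and for every $p>2$ this exponent is strictly less than $1$, i.e.\ it lands in the \emph{integrable} range, so no contradiction with $t_0\in R(\psi_y^q)$ is obtained. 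Your parenthetical ``one checks the exponent always lands in the non-integrable range'' is exactly the check that fails.

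The estimate has to be run on $\phi$ itself rather than on $\phi^p$, using the full strength of $\phi\in W^{1,p}_\loc(\d x)$ from (\textbf{Diff}): by Fubini, for a.e.\ $y$ the (compatible continuous version of the) map $\psi_y$ is absolutely continuous with $\psi_y'\in L^p_\loc(\d t)$, so if $\psi_y(t_0)=0$ then H\"older gives, on an interval $I\ni t_0$,
\[
\psi_y(t)\le\int_{t_0}^{t}\abs{\psi_y'}\,\d s\le\norm{\psi_y'}_{L^p(I)}\abs{t-t_0}^{1/q},
\]
hence $\psi_y^{-q}(t)\ge\norm{\psi_y'}_{L^p(I)}^{-q}\abs{t-t_0}^{-1}$, and the exponent is exactly critical: $\int_{t_0-\eps}^{t_0+\eps}\psi_y^{-q}\,\d s=\infty$ for every $\eps>0$, i.e.\ $t_0\notin R(\psi_y^q)$. (Equivalently, a bootstrap on $\int_{t_0}^{t}p\psi_y^{p-1}\abs{\psi_y'}\,\d s$ via H\"older yields the correct rate $\widetilde{\psi^p_y}(t)\le C\abs{t-t_0}^{p-1}$ for $\phi^p$.) Your separate treatment of $p=1$ is correct, since there continuity of $\widetilde{\psi_y}$ at a zero already defeats local essential boundedness of $\psi_y^{-1}$; and the representative-compatibility bookkeeping you flag is indeed needed but is routine via Fubini. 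With the H\"older estimate inserted in place of the $O(\abs{t-t_0})$ claim, your argument is a legitimate (and more explicit) version of the paper's very terse proof.
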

\begin{proof}
Note that $\phi^p\in W^{1,1}_\loc(\d x)$ by Lemma \ref{pqlem}.
Then the first part follows from a well known theorem due to Nikod\'ym, cf. \cite[Theorem 2.7]{Miz}.
The second part follows from absolute continuity and Remark \ref{essinfrem} for $d=1$.
\end{proof}

We immediately get that:

\begin{cor}\label{diffcor}
It holds that
\[\up{(\textbf{Diff})}\quad\Longrightarrow\quad\up{(\textbf{Ham1})}.\]
\end{cor}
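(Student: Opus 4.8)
The goal is to show $\textup{(\textbf{Diff})}\Rightarrow\textup{(\textbf{Ham1})}$, i.e. that for each $i$ and for a.e.\ $y\in\{e_i\}^\perp$, the slice $\psi_y\colon t\mapsto\phi(y+te_i)$ vanishes ($\psi_y^p=0$) a.e.\ on the complement of $R(\psi_y^q)$ when $p>1$ (resp.\ $\psi_y=0$ a.e.\ off $\hat R(\psi_y)$ when $p=1$).

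The plan is to invoke Lemma~\ref{wabscontlem} directly. That lemma, under \textup{(\textbf{Diff})}, hands us for each fixed $i$ a version $\widetilde{\phi^p}$ whose slice $\widetilde{\psi_y^p}\colon t\mapsto\widetilde{\phi^p}(y+te_i)$ is absolutely continuous for a.e.\ $y\in\{e_i\}^\perp$, together with the inclusion
\[
\R\setminus R(\psi_y^q)\;\supset\;\{t\in\R\,:\,\widetilde{\psi_y^p}(t)=0\}
\]
for a.e.\ such $y$ (and the analogous statement with $\hat R(\psi_y)$ when $p=1$). So first I would fix $i$ and restrict to the full-measure set of $y$ for which both conclusions of Lemma~\ref{wabscontlem} hold. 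Second, since $\widetilde{\phi^p}=\phi^p$ a.e.\ on $\R^d$, Fubini gives that for a.e.\ $y$ we have $\widetilde{\psi_y^p}=\psi_y^p$ for $\d t$-a.e.\ $t$; intersect this with the previous full-measure set of $y$. Third, for such a $y$: wherever $\psi_y^p(t)\neq0$ we have (for a.e.\ $t$) $\widetilde{\psi_y^p}(t)\neq0$, so by the displayed inclusion $t\in R(\psi_y^q)$. Contrapositively, for $\d t$-a.e.\ $t\in\R\setminus R(\psi_y^q)$ we get $\psi_y^p(t)=0$, which is exactly the slicewise condition in \textup{(\textbf{Ham1})}. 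Fourth, quantify over the finitely many $i\in\{1,\dots,d\}$ to conclude.

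The $p=1$ case is handled identically, replacing $R(\psi_y^q)$ by $\hat R(\psi_y)$ and $\psi_y^p$ by $\psi_y=\psi_y^1$, using the second inclusion in Lemma~\ref{wabscontlem}; note no extra hypothesis such as \eqref{LipGrad} is needed here, since Lemma~\ref{wabscontlem} already covers $p=1$ under \textup{(\textbf{Diff})} alone.

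There is no real obstacle: essentially all the work has been done in Lemma~\ref{wabscontlem}, and the only thing to be careful about is the bookkeeping of null sets — passing from the a.e.-on-$\R^d$ identity $\widetilde{\phi^p}=\phi^p$ to an a.e.-in-$y$, a.e.-in-$t$ statement via Fubini, and then intersecting that null set of bad $y$'s with the null set of $y$'s for which Lemma~\ref{wabscontlem}'s absolute continuity or inclusion fails. Since a finite (indeed countable) union of null sets is null, this causes no difficulty, and the corollary follows in a few lines.
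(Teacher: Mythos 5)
Your overall strategy is the paper's: the corollary is meant to be read off from Lemma \ref{wabscontlem} after routine Fubini bookkeeping with null sets, and your handling of the null sets is fine. However, the central logical step is backwards. The displayed inclusion of Lemma \ref{wabscontlem} is
\[
\{t\in\R\,\vert\,\widetilde{\psi^p_y}(t)=0\}\;\subset\;\R\setminus R(\psi^q_y),
\]
i.e.\ a zero of the continuous version forces $t\notin R(\psi^q_y)$; equivalently, $t\in R(\psi^q_y)$ implies $\widetilde{\psi^p_y}(t)\neq 0$. In your third step you deduce from it the converse implication ``$\widetilde{\psi^p_y}(t)\neq 0\Rightarrow t\in R(\psi^q_y)$'', which is exactly the \emph{reverse} inclusion $\R\setminus R(\psi^q_y)\subset\{\widetilde{\psi^p_y}=0\}$. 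That reverse inclusion is what \textup{(\textbf{Ham1})} actually requires, and it does not follow from the displayed one; as written, your step affirms the consequent.

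The reverse inclusion is nevertheless available, and it is the ingredient you should cite: it is the ``$\d t$-almost sure inclusion $\subset$'' mentioned in the lemma, and it follows from the \emph{first} part of Lemma \ref{wabscontlem} (absolute continuity of the slice) combined with Remark \ref{essinfrem} in dimension one. Concretely, if $\widetilde{\psi^p_y}(t_0)>0$, then by continuity $\widetilde{\psi^p_y}\geq c>0$ on a neighbourhood of $t_0$, hence $\psi_y^{-q}=(\widetilde{\psi^p_y})^{-q/p}$ is essentially bounded, in particular integrable, near $t_0$, so $t_0\in R(\psi^q_y)$; the same argument with $\hat{R}(\psi_y)$ handles $p=1$. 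Note that this implication genuinely uses the continuity of the version: for a general measurable slice it is false (take $\psi_y$ to be the indicator of a fat Cantor set, for which $R(\psi_y^q)=\emptyset$ while $\psi_y$ is positive on a set of positive measure), so it cannot be waved through as a formal consequence of the definition of $R$. With the citation corrected in this way, the rest of your argument (Fubini, intersecting the null sets, quantifying over the finitely many $i$) goes through and coincides with the paper's intended proof; you are also right that no extra hypothesis such as \eqref{LipGrad} is needed for $p=1$.
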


Motivated by \eqref{Leibeq}, we shall define the weighted Sobolev space $V^{1,p}(\mu)$.

\begin{defi}
If \up{(\textbf{Diff})} holds, we define the space $V^{1,p}(\mu)$ to be the set of all $\mu$-classes of functions $f\in L^p(\mu)$
such that there exists a gradient
\[\nabla^\mu f=(\partial_1^\mu f,\ldots,\partial_d^\mu f)\in L^p(\mu;\R^d)\]
which satisfies
\begin{equation}\label{ibpeq}
\int \partial_i^\mu f\eta\phi^p\,\d x=-\int f\partial_i\eta\phi^p\,\d x-\int f\eta\beta_i\phi^p\,\d x
\end{equation}
for all $i\in\{1,\ldots,d\}$ and all $\eta\in C_0^\infty(\R^d)$.

Define also $V^{1,p}_\loc(\mu)$ by replacing
$L^p(\mu)$ and $L^p(\mu;\R^d)$ above by $L^p_\loc(\mu)$ and $L^p_\loc(\mu;\R^d)$ resp.
\end{defi}

The first two integrals in \eqref{ibpeq} are obviously well defined. The third integral is finite
by (\textbf{Diff}). It follows immediately that the gradient $\nabla^\mu$ is unique.
Also, if $f\in C^1(\R^d)$, then $f\in V^{1,p}_\loc(\mu)$ and $\nabla f=\nabla^\mu f$ $\mu$-a.s.

\begin{prop}
Assume \up{(\textbf{Diff})}. Then $V^{1,p}(\mu)$ is a Banach space with the obvious choice of a norm
\[\norm{\cdot}_{1,p,\mu}:=\left(\norm{\nabla^\mu\cdot}_{L^p(\mu;\R^d)}^p+\norm{\cdot}_{L^p(\mu)}^p\right)^{1/p}.\]

Moreover, $H^{1,p}(\mu)\subset V^{1,p}(\mu)$ and their gradients coincide $\mu$-a.e.
\end{prop}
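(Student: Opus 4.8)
The plan is to verify the three defining properties of a Sobolev-type space for $V^{1,p}(\mu)$ — that $\norm{\cdot}_{1,p,\mu}$ is a genuine norm, that the space is complete, and that $H^{1,p}(\mu)$ embeds into it compatibly — treating each in turn. That $\norm{\cdot}_{1,p,\mu}$ is a norm is immediate once we know $\nabla^\mu$ is well defined on $\mu$-classes, which was already noted after \eqref{ibpeq}: the integration by parts formula determines $\nabla^\mu f$ uniquely in $L^p(\mu;\R^d)$, so if $\norm{f}_{1,p,\mu}=0$ then $f=0$ in $L^p(\mu)$, and homogeneity and the triangle inequality are inherited from the $L^p(\mu)$ and $L^p(\mu;\R^d)$ norms via Minkowski.

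For completeness, I would take a $\norm{\cdot}_{1,p,\mu}$-Cauchy sequence $\{f_n\}\subset V^{1,p}(\mu)$. Then $\{f_n\}$ is Cauchy in $L^p(\mu)$ and $\{\nabla^\mu f_n\}$ is Cauchy in $L^p(\mu;\R^d)$, so by completeness of the Lebesgue spaces there exist $f\in L^p(\mu)$ and $g\in L^p(\mu;\R^d)$ with $f_n\to f$ and $\nabla^\mu f_n\to g$ in the respective norms. The point is to pass to the limit in \eqref{ibpeq}: fix $\eta\in C_0^\infty(\R^d)$ and $i$, and note that $\eta\phi^p\in L^q(\mu\text{-dual pairing})$ in the appropriate sense — more precisely, $\int\partial_i^\mu f_n\,\eta\,\phi^p\,\d x\to\int g_i\,\eta\,\phi^p\,\d x$ because $\eta$ is bounded with compact support and $\phi^p\,\d x=\mu$, so testing against the bounded compactly supported function $\eta$ is continuous on $L^p(\mu)$; similarly $\int f_n\,\partial_i\eta\,\phi^p\,\d x\to\int f\,\partial_i\eta\,\phi^p\,\d x$. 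The only term needing a moment's care is $\int f_n\,\eta\,\beta_i\,\phi^p\,\d x$: here $\eta\,\beta_i\in L^p_\loc(\mu)$ is not bounded, but $\beta\in L^p_\loc(\mu;\R^d)$ by Lemma \ref{pqlem} (or directly, $\beta\in L^q_\loc(\mu;\R^d)$ by (\textbf{Diff}) when $p>1$, and $\beta\in L^p_\loc(\mu)$ always), so on the compact support of $\eta$ we have $\eta\beta_i\in L^{p'}(\mu)$ with $p'$ conjugate, whence $f_n\mapsto\int f_n\eta\beta_i\phi^p\,\d x$ is continuous on $L^p(\mu)$ and the limit is $\int f\eta\beta_i\phi^p\,\d x$. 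Thus \eqref{ibpeq} holds for $f$ with gradient $g$, so $f\in V^{1,p}(\mu)$ with $\nabla^\mu f=g$, and $f_n\to f$ in $\norm{\cdot}_{1,p,\mu}$.

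For the inclusion $H^{1,p}(\mu)\subset V^{1,p}(\mu)$, I would first observe that for $u\in C_0^\infty(\R^d)$ the classical Leibniz rule \eqref{Leibeq} is exactly \eqref{ibpeq} with $\partial_i^\mu u=\partial_i u$, so $C_0^\infty(\R^d)\subset V^{1,p}(\mu)$ and there the two gradients agree $\mu$-a.e.; since on $C_0^\infty$ the two norms $\norm{\cdot}_{1,p,\mu}$ literally coincide, the identity map $C_0^\infty\to V^{1,p}(\mu)$ is an isometry, and by Corollary \ref{diffcor} we have (\textbf{Ham1}), so $H^{1,p}(\mu)$ is the completion of $C_0^\infty$ (via $X$, using density of $C_0^\infty$-classes in $H^{1,p}(\mu)$, Proposition \ref{c0prop}) with respect to this same norm. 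Since $V^{1,p}(\mu)$ is complete by the previous step and contains $C_0^\infty$ isometrically, the universal property of completion gives a canonical isometric embedding $H^{1,p}(\mu)\hookrightarrow V^{1,p}(\mu)$. Finally, to see the gradients coincide $\mu$-a.e., take $u\in H^{1,p}(\mu)$ and a sequence $u_n\in C_0^\infty$ with $u_n\to u$ in $H^{1,p}(\mu)$; then $u_n\to u$ in $L^p(\mu)$ and $\nabla u_n=\nabla^\mu u_n\to\nabla^\mu u$ in $L^p(\mu;\R^d)$ where the last $\nabla^\mu u$ is the $V^{1,p}(\mu)$-gradient (by continuity of the embedding), while by definition of $H^{1,p}(\mu)$ the limit of $\nabla u_n$ is also the $H$-gradient $\nabla^\mu_H u$; uniqueness of $L^p(\mu)$-limits forces the two to agree. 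The only genuinely non-routine point is the limiting argument for the $\beta_i$-term in \eqref{ibpeq}, and that is handled purely by the local integrability $\beta\in L^p_\loc(\mu;\R^d)$ from Lemma \ref{pqlem} together with the compact support of the test function.
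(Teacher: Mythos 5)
Your proof is correct and follows essentially the same route as the paper: completeness via Riesz--Fischer and passage to the limit in \eqref{ibpeq}, then the inclusion $H^{1,p}(\mu)\subset V^{1,p}(\mu)$ by verifying \eqref{ibpeq} on $C_0^\infty$ through the classical Leibniz rule and extending by Proposition \ref{c0prop} together with completeness. One small remark: the pairing needed for the $\beta_i$-term is $\eta\beta_i\in L^q(\mu)$, which comes from $\beta\in L^q_\loc(\mu;\R^d)$ in \up{(\textbf{Diff})} (as you correctly use in the body), not from the $L^p_\loc(\mu;\R^d)$ integrability of Lemma \ref{pqlem} cited in your closing sentence.
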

\begin{proof}
Let $\{f_n\}\subset V^{1,p}(\mu)$ be a $\norm{\cdot}_{1,p,\mu}$-Cauchy sequence. By the Riesz-Fischer theorem,
$\{f_n\}$ converges to some $f\in L^p(\mu)$ and $\{\nabla^\mu f_n\}$ converges to some $g\in L^p(\mu;\R^d)$.
Let $i\in\{1,\ldots,d\}$ and $\eta\in C_0^\infty(\R^d)$. Passing on to
the limit in \eqref{ibpeq}
yields that
\[\int g_i\eta\phi^p\,\d x=-\int f\partial_i\eta\phi^p\,\d x-\int f\eta\beta_i\phi^p\,\d x.\]
Therefore $g=\nabla^\mu f$ and $\norm{f_n-f}_{1,p,\mu}\to 0$.

Let us prove the second part. Note that by Corollary \ref{diffcor} and the discussion above, $H^{1,p}(\mu)$
is a well defined set of elements in $L^p(\mu)$.

Let $f\in C_0^\infty(\R^d)\subset H^{1,p}(\mu)$. By (\textbf{Diff}) and the Leibniz formula for unweighted
Sobolev spaces, \eqref{Leibeq} is satisfied. By classical integration by parts, $f$ satisfies
\eqref{ibpeq} with $\nabla^\mu f=\nabla f$. We extend to all of $H^{1,p}(\mu)$ by Proposition
\ref{c0prop} using that $V^{1,p}(\mu)$ is complete.
\end{proof}

For our main result further below, we need to be able to truncate $V^{1,p}(\mu)$-functions.
Therefore, we need to verify
\emph{absolute continuity on lines parallel to the coordinate axes} in $V^{1,p}(\mu)$:

\begin{prop}\label{abscontprop}
Suppose that \up{(\textbf{Diff})} holds. Fix $1\le i\le d$.
Then $f\in V^{1,p}(\mu)$ has
a representative $\tilde{f}^i$ such that $t\mapsto\tilde{f}^i(y+te_i)$ is absolutely continuous
for (($d-1$)-dim.) Lebesgue almost all $y\in\{e_i\}^\perp$ on any compact subinterval of $R(\phi^q(y+\cdot e_i))$ if $p\in (1,\infty)$, on any compact subinterval of $\hat{R}(\phi(y+\cdot e_i))$ resp. if $p=1$.
In that case, for $\d y$-a.a. $y\in\{e_i\}^\perp$, $\d t$-a.a. $t\in R(\phi^q(y+\cdot e_i))$ (if $p\in (1,\infty)$), $\hat{R}(\phi(y+\cdot e_i))$ (if $p=1$) resp. setting $x:=y+te_i$, it holds that
\[\partial^\mu_i f(x)=\frac{\d}{\d t}\tilde{f}^i(y+te_i).\]
\end{prop}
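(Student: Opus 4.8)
The plan is to reduce the claim to a one-dimensional statement using Fubini, and then to characterize $\partial_i^\mu f$ along almost every line as a classical weak derivative of $f$ restricted to that line, so that the Nikod\'ym-type absolute-continuity theorem applies. First I would fix $i$ and, for $\eta\in C_0^\infty(\R^d)$ written as $\eta(y+te_i)$ with $y\in\{e_i\}^\perp$, rewrite the integration-by-parts identity \eqref{ibpeq}. Using the Leibniz rule $\partial_i\eta\cdot\phi^p+\eta\beta_i\phi^p=\partial_i(\eta\phi^p)$ (valid in $W^{1,1}_\loc(\d x)$ by Lemma \ref{pqlem}, since $\phi^p\in W^{1,1}_\loc$ and $\beta_i\phi^p=\partial_i(\phi^p)$), equation \eqref{ibpeq} becomes
\[\int \partial_i^\mu f\cdot\eta\cdot\phi^p\,\d x=-\int f\cdot\partial_i(\eta\phi^p)\,\d x.\]
This says precisely that the $L^1_\loc$ function $(\partial_i^\mu f)\phi^p$ is the weak (Schwartz) partial derivative $\D_i(f\phi^p)$; note $f\phi^p\in L^1_\loc(\d x)$ by H\"older (as in Section \ref{ibpsubsec}) and $(\partial_i^\mu f)\phi^p\in L^1_\loc(\d x)$ by Lemma \ref{locallyintegrablelem} applied to $\partial_i^\mu f\in L^p(\mu)$ together with $\phi^p\in L^1_\loc$, or directly by H\"older. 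Hence $f\phi^p\in W^{1,1}_\loc(\d x)$ with $\D_i(f\phi^p)=(\partial_i^\mu f)\phi^p$.

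Next I would apply the Nikod\'ym absolute continuity theorem (as cited in Lemma \ref{wabscontlem}, \cite[Theorem 2.7]{Miz}) to $f\phi^p\in W^{1,1}_\loc(\d x)$: there is a version $\widetilde{f\phi^p}$ such that for a.e. $y\in\{e_i\}^\perp$ the map $t\mapsto\widetilde{f\phi^p}(y+te_i)$ is locally absolutely continuous with classical derivative equal to $(\partial_i^\mu f)\phi^p$ evaluated along the line, for a.e. $t$. Simultaneously, by Lemma \ref{wabscontlem} there is a version $\widetilde{\phi^p}$ of $\phi^p$ that is locally absolutely continuous along a.e. line, with the stated zero-set inclusion into $\R\setminus R(\psi_y^q)$ (resp. $\R\setminus\hat R(\psi_y)$). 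On the set $R(\phi^q(y+\cdot e_i))$ the one-dimensional version $\widetilde{\psi_y^p}$ is strictly positive (by Lemma \ref{wabscontlem} and the $\d t$-a.e. reverse inclusion, plus the fact that $\phi^p>0$ a.e. on $R(\phi^q)$), so on any compact subinterval $K$ of that open set $\widetilde{\psi_y^p}$ is bounded away from zero. Therefore I define $\tilde f^i:=\widetilde{f\phi^p}/\widetilde{\phi^p}$ on the (line-wise) regular set and, say, $0$ elsewhere; as a quotient of a locally absolutely continuous function by a locally absolutely continuous function bounded away from zero on $K$, $t\mapsto\tilde f^i(y+te_i)$ is locally absolutely continuous on $K$. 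One must check $\tilde f^i=f$ $\mu$-a.e.: away from $\{\phi=0\}$ this is immediate since $\widetilde{f\phi^p}=f\phi^p$ and $\widetilde{\phi^p}=\phi^p$ a.e., and $\mu(\{\phi=0\})=0$.

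Finally, to identify the derivative, differentiate the product $\widetilde{f\phi^p}=\tilde f^i\,\widetilde{\phi^p}$ along the line on $K$ using the classical Leibniz rule for one-dimensional absolutely continuous functions:
\[(\partial_i^\mu f)\phi^p=\frac{\d}{\d t}\widetilde{f\phi^p}=\Bigl(\frac{\d}{\d t}\tilde f^i\Bigr)\widetilde{\phi^p}+\tilde f^i\,\frac{\d}{\d t}\widetilde{\phi^p}=\Bigl(\frac{\d}{\d t}\tilde f^i\Bigr)\phi^p+f\,\partial_i(\phi^p)\]
for a.e. $t\in K$. Since $\partial_i(\phi^p)=\beta_i\phi^p$ and, by the very definition of $\tilde f^i$ combined with $\widetilde{f\phi^p}=(f)(\phi^p)$, one gets after dividing by $\phi^p>0$ that $\partial_i^\mu f(x)=\frac{\d}{\d t}\tilde f^i(y+te_i)$ for $\d y$-a.a. $y$ and $\d t$-a.a. $t$ in the regular set, which is the assertion. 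The $p=1$ case is handled identically with $\hat R(\phi)$ in place of $R(\phi^q)$, using that $\phi^{-1}\in L^\infty_\loc(\hat R(\phi))$ keeps $\widetilde{\phi}$ bounded away from zero on compact subintervals. The main obstacle I anticipate is the bookkeeping of null sets: one must choose a single version of $f\phi^p$ and of $\phi^p$ and then argue that, for a.e. line $y$, all of the following hold simultaneously — absolute continuity of both versions along that line, strict positivity of $\widetilde{\psi_y^p}$ on the line-wise regular set, and coincidence of the line-wise classical derivatives with the restrictions of the ambient $L^1_\loc$ derivatives — which requires invoking Fubini carefully for each of these countably many conditions before taking the quotient.
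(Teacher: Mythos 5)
Your overall strategy --- pass to $f\phi^p\in W^{1,1}_\loc(\d x)$, apply Nikod\'ym's theorem to get line-wise absolute continuity of $\widetilde{f\phi^p}$ and of $\widetilde{\phi^p}$, and recover $\tilde f^i$ as the quotient on the line-wise regular set --- is exactly the route of the reference \cite[Lemma 2.2]{ARZ}, which is all the paper itself offers by way of proof. However, there is a genuine error at your first step. The identity
\[\int \partial_i^\mu f\,\eta\,\phi^p\,\d x=-\int f\,\partial_i(\eta\phi^p)\,\d x\]
does \emph{not} say that $(\partial_i^\mu f)\phi^p=\D_i(f\phi^p)$: the weak derivative of $f\phi^p$ is defined by testing against $\partial_i\eta$ for arbitrary $\eta\in C_0^\infty$, not against $\partial_i(\eta\phi^p)$, and $\eta\phi^p$ is not a test function. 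Expanding $\partial_i(\eta\phi^p)=\phi^p\partial_i\eta+\eta\,\partial_i(\phi^p)$ in your display and moving the extra term to the other side shows that what \eqref{ibpeq} actually encodes is
\[\D_i(f\phi^p)=(\partial_i^\mu f)\phi^p+f\,\partial_i(\phi^p)=(\partial_i^\mu f)\phi^p+f\beta_i\phi^p,\]
where the additional term is in $L^1_\loc(\d x)$ by H\"older since $f\in L^p(\mu)$ and $\beta_i\in L^q_\loc(\mu)$. This is precisely the Leibniz formula \eqref{Leibeq} that motivates the whole construction of $V^{1,p}(\mu)$, and its one-power analogue is the conclusion of Lemma \ref{CFlem}.

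The error is not cosmetic: it makes your final computation internally inconsistent. If one really had $\frac{\d}{\d t}\widetilde{f\phi^p}=(\partial_i^\mu f)\phi^p$ along a.e.\ line, then your own Leibniz expansion $\frac{\d}{\d t}\widetilde{f\phi^p}=\bigl(\frac{\d}{\d t}\tilde f^i\bigr)\widetilde{\phi^p}+\tilde f^i\,\frac{\d}{\d t}\widetilde{\phi^p}$ would give, after dividing by $\phi^p>0$, the identity $\partial_i^\mu f=\frac{\d}{\d t}\tilde f^i+f\beta_i$ --- not the assertion of the proposition. With the corrected identification of $\D_i(f\phi^p)$ the two occurrences of $f\,\partial_i(\phi^p)$ cancel and division by $\phi^p$ yields exactly $\partial_i^\mu f=\frac{\d}{\d t}\tilde f^i$. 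So the proof is salvageable by replacing your first identification with \eqref{Leibeq}; the remaining steps (strict positivity of $\widetilde{\psi_y^p}$ on $R(\psi_y^q)$ from Lemma \ref{wabscontlem}, the quotient of absolutely continuous functions on compact subintervals where the denominator is bounded away from zero, the verification that the quotient is a $\mu$-version of $f$, and the Fubini bookkeeping of null sets) are sound as you describe them.
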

\begin{proof}
The claim can be proved arguing similar to \cite[Proof of Lemma 2.2]{ARZ}.
Compare also with \cite[\S 4.9.2]{EG}.
\end{proof}

Picking appropriate absolutely continuous versions, one immediately obtains the following
Leibniz formula:

\begin{cor}\label{leibnizcor}
Suppose that \up{(\textbf{Diff})} holds.
If $f,g\in V^{1,p}(\mu)$ and if $fg$, $f\partial_i^\mu g$ and $g\partial_i^\mu f$ are in $L^p(\mu)$ for
all $1\le i\le d$, then $fg\in V^{1,p}(\mu)$ and $\partial_i^\mu(fg)=f\partial_i^\mu g+g\partial_i^\mu f$
for all $1\le i\le d$. Then also, $\nabla^\mu(fg)=f\nabla^\mu g+g\nabla^\mu f$.
\end{cor}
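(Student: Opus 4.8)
The plan is to verify that $fg$ satisfies the defining integration by parts identity \eqref{ibpeq} with the candidate gradient $g\,\partial_i^\mu f+f\,\partial_i^\mu g$. Since $fg\in L^p(\mu)$ and $g\,\partial_i^\mu f+f\,\partial_i^\mu g\in L^p(\mu;\R^d)$ by hypothesis, this yields $fg\in V^{1,p}(\mu)$ with $\partial_i^\mu(fg)=g\,\partial_i^\mu f+f\,\partial_i^\mu g$, and the final vector identity $\nabla^\mu(fg)=f\nabla^\mu g+g\nabla^\mu f$ is then merely the collection of the $d$ scalar ones. So fix $i\in\{1,\dots,d\}$ and $\eta\in C_0^\infty(\R^d)$. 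First I would reduce \eqref{ibpeq} to a one-dimensional statement on lines parallel to $e_i$: writing $x=y+te_i$ with $y\in\{e_i\}^\perp$ and applying Fubini's theorem, each of the three integrals in \eqref{ibpeq} becomes an iterated integral, the use of Fubini being legitimate because all three integrands lie in $L^1(\R^d,\d x)$. This integrability is the same check as in the definition of $V^{1,p}(\mu)$ and follows from $\eta\in C_0^\infty$ together with \up{(\textbf{Diff})}, Lemma~\ref{pqlem} and Lemma~\ref{Wqlem} (which provide $\phi^{p-1}\in L^q_\loc$ and $\abs{\nabla\phi}\phi^{p-2}\in L^q_\loc$ when $p\in(1,\infty)$) and H\"older's inequality.

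Next I would assemble the one-dimensional picture on almost every line. By Proposition~\ref{abscontprop} fix representatives $\tilde f^i,\tilde g^i$ which, for almost all $y\in\{e_i\}^\perp$, are absolutely continuous on every compact subinterval of the regular set $I_y:=R(\phi^q(y+\cdot e_i))$ (with $\hat R(\phi(y+\cdot e_i))$ in place of $I_y$ when $p=1$) and have line derivatives $\partial_i^\mu f,\partial_i^\mu g$ there; by Lemma~\ref{wabscontlem} fix the absolutely continuous version $\widetilde{\phi^p}$, whose positivity set along the line agrees with $I_y$ up to a Lebesgue-null set — hence, both sets being open, exactly — and which, by Lemma~\ref{pqlem}, satisfies $\frac{\d}{\d t}\widetilde{\phi^p}(y+te_i)=\beta_i(y+te_i)\,\widetilde{\phi^p}(y+te_i)$ for a.e.\ $t$. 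On each connected component of $I_y$ the map
\[t\longmapsto\tilde f^i(y+te_i)\,\tilde g^i(y+te_i)\,\widetilde{\phi^p}(y+te_i)\,\eta(y+te_i)\]
is then locally absolutely continuous, and the classical product rule for absolutely continuous functions identifies its derivative, for a.e.\ $t$, with $\kappa_i(y+te_i)$, where
\[\kappa_i:=\bigl(g\,\partial_i^\mu f+f\,\partial_i^\mu g+fg\,\beta_i\bigr)\phi^p\eta+fg\,\phi^p\,\partial_i\eta\in L^1(\R^d,\d x).\]
Note that, every summand carrying a factor $\phi^p$, the function $\kappa_i$ vanishes on $\{\phi^p=0\}$.

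The one real point is to show that, for almost all $y$, this product extends to an absolutely continuous, compactly supported function on all of $\R$. It is identically $0$ on the open set $\{\,t:\widetilde{\phi^p}(y+te_i)=0\,\}$, so only its behaviour across $\partial I_y$ needs to be controlled, and there the vanishing factor $\widetilde{\phi^p}$ should force the one-sided limits of the product to be $0$. This is exactly where the hypotheses $fg,\ g\,\partial_i^\mu f,\ f\,\partial_i^\mu g\in L^p(\mu)$ come in: a nonzero one-sided limit at a point $t_0\in\partial I_y$ would force $\abs{fg}\gtrsim\widetilde{\phi^p}(y+te_i)^{-1}$ on a one-sided neighbourhood (in $t$) of $t_0$, whereas $t_0$ lies outside $I_y=R(\phi^q(y+\cdot e_i))$, so $\phi^{-q}$ fails to be integrable near $t_0$ along the line, contradicting $fg\in L^p(\mu)$ by Fubini (for $p<2$ the mixed terms $g\,\partial_i^\mu f$, $f\,\partial_i^\mu g$ must be fed into the same estimate; the modifications for $p=1$ are similar). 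Granting this, the fundamental theorem of calculus gives, for a.e.\ $y\in\{e_i\}^\perp$,
\[\int_\R\kappa_i(y+te_i)\,\d t=0\]
(only the components of $I_y$ contribute, since $\kappa_i$ vanishes where $\phi^p=0$). Integrating over $y\in\{e_i\}^\perp$ and undoing the Fubini reduction gives precisely \eqref{ibpeq} for $fg$ with $\partial_i^\mu(fg)=g\,\partial_i^\mu f+f\,\partial_i^\mu g$, and letting $i$ range over $\{1,\dots,d\}$ proves the corollary. The main obstacle is the boundary claim just indicated — making precise that the product is genuinely absolutely continuous up to $\partial I_y$ and not merely locally so on $I_y$; everything else is the routine bookkeeping behind the phrase ``picking appropriate absolutely continuous versions''.
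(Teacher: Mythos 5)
Your overall architecture --- reduce to lines via Fubini, multiply the absolutely continuous representatives from Proposition \ref{abscontprop} and Lemma \ref{wabscontlem}, and integrate the classical product rule over each component of the regular set --- is exactly what the paper's one-line proof (``picking appropriate absolutely continuous versions\dots'') intends, and you correctly isolate the one nontrivial point: the boundary terms at $\partial I_y$ must vanish. But your argument for that point has a genuine gap. If $\tilde f^i\tilde g^i\,\widetilde{\psi^p_y}\,\eta$ tends to $L\ne 0$ at a finite endpoint $b$ of a component of $I_y$, you get $\abs{fg}\ge c\,(\psi^p_y)^{-1}$ there, hence $\abs{fg}^p\psi^p_y\ge c^p\,\psi_y^{-p(p-1)}$; so $fg\in L^p(\mu)$ contradicts non-integrability of $\psi_y^{-p(p-1)}$ near $b$, not of $\psi_y^{-q}$. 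Since $p(p-1)\ge q$ if and only if $p\ge 2$, the contradiction with $b\notin R(\psi^q_y)$ only works for $p\ge 2$ (and even then only if the divergence of $\int_{B(b,\eps)}\psi_y^{-q}\,\d t$ comes from the side \emph{inside} the component, which $b\notin R(\psi^q_y)$ does not guarantee). For $1<p<2$ the two facts are compatible (power-law test: $\psi^p_y(t)=\abs{b-t}^\alpha$), and for $p=1$ the estimate yields nothing. Your parenthetical fix --- feeding in the mixed terms --- does not help: $f\partial_i^\mu g$ and $g\partial_i^\mu f$ admit no pointwise lower bound near $b$, so their membership in $L^p(\mu)$ produces no further divergence.

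What actually closes the gap is the half of \up{(\textbf{Diff})} you never invoke, namely $\beta\in L^q_\loc(\mu;\R^d)$. By H\"older along the line,
\[\int_{b-\eps}^{b}\abs{\beta_i}\,\d t\le\left(\int_{b-\eps}^{b}\abs{\beta_i}^q\psi^p_y\,\d t\right)^{1/q}\left(\int_{b-\eps}^{b}\psi_y^{-p(p-1)}\,\d t\right)^{1/p},\]
so if $L\ne0$ forced $\int_{b-\eps}^{b}\psi_y^{-p(p-1)}\,\d t<\infty$, then $\beta_i$ would be integrable up to $b$ along the line; since $(\widetilde{\psi^p_y})'=\beta_i\widetilde{\psi^p_y}$ a.e., $\log\widetilde{\psi^p_y}$ would be absolutely continuous up to $b$ and hence $\widetilde{\psi^p_y}(b)>0$. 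This contradicts $\widetilde{\psi^p_y}(b)=0$, which holds at every finite boundary point simply by continuity (a positive value would make $\psi_y^{-q}$ locally bounded, putting $b$ inside $R(\psi^q_y)$). This argument is uniform in $p\in(1,\infty)$; for $p=1$ the analogous Gronwall reasoning with $\beta\in L^\infty_\loc(\mu)$ shows $\{\widetilde{\psi_y}>0\}$ has no finite boundary points, so the issue disappears. A small further slip: two open sets agreeing up to a Lebesgue-null set need not coincide ($(-1,1)$ versus $(-1,0)\cup(0,1)$), so your identification of $\{\widetilde{\phi^p}>0\}$ with $I_y$ should instead be derived from the continuity argument just given together with Lemma \ref{wabscontlem}.
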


The following lemma guarantees that we can truncate Sobolev functions.

\begin{lem}\label{contractionlem}
Suppose that \up{(\textbf{Diff})} holds.
Suppose that $f\in V^{1,p}(\mu)$ and that $F:\R\to\R$ is Lipschitz. Then $F\circ f\in V^{1,p}(\mu)$ with
\[\ol{\nabla}(F\circ f)=(F'\circ f)\cdot\ol{\nabla} f\quad \mu\text{-a.s.}\]
In particular, when $F(t):=N\wedge t\vee -N$, $N\in\N$ is a cut-off function,
\begin{equation}\label{mkequation}\abs{\ol{\nabla} (F\circ f)}\le\abs{\ol{\nabla} f}\quad \mu\text{-a.s.}\end{equation}
\end{lem}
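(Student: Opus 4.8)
The plan is to reduce the statement to the one-dimensional absolute-continuity picture provided by Proposition \ref{abscontprop} and then apply the classical chain rule for absolutely continuous functions of one variable. First I would fix $1\le i\le d$ and pick the representative $\tilde f^i$ of $f$ from Proposition \ref{abscontprop}, so that $t\mapsto\tilde f^i(y+te_i)$ is absolutely continuous on every compact subinterval of $R(\phi^q(y+\cdot e_i))$ (resp. of $\hat R(\phi(y+\cdot e_i))$ when $p=1$) for a.a. $y\in\{e_i\}^\perp$, with $\frac{\d}{\d t}\tilde f^i(y+te_i)=\partial_i^\mu f(y+te_i)$ for $\d t$-a.a.\ $t$ in that set. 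Since $F$ is Lipschitz, $F\circ\tilde f^i$ is again absolutely continuous on those subintervals, and the classical one-dimensional chain rule gives $\frac{\d}{\d t}(F\circ\tilde f^i)(y+te_i)=(F'\circ\tilde f^i)(y+te_i)\cdot\frac{\d}{\d t}\tilde f^i(y+te_i)$ for a.a.\ $t$ there (using that $F'$ exists a.e.\ and that the composition formula is valid for Lipschitz $F$ precomposed with an AC function of one variable).

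Next I would verify that the candidate $g_i:=(F'\circ f)\cdot\partial_i^\mu f$, together with $F\circ f$, lies in the right spaces: $F\circ f\in L^p(\mu)$ because $\abs{F\circ f}\le \abs{F(0)}+\Lip(F)\abs{f}$ and $f\in L^p(\mu)$, $\mu$ being finite on the relevant sets only locally — but since $F(0)$ contributes a constant one should note $\abs{F(t)-F(0)}\le\Lip(F)\abs t$ and handle the constant by observing $f\in L^p(\mu)$ and $\mu$ need not be finite, so instead bound $\abs{F\circ f}\le\abs{F(0)}+\Lip(F)\abs f$ and use that constants are in $L^p(\mu)$ iff $\mu(\R^d)<\infty$; to avoid this issue one writes $F\circ f - F(0) = (F-F(0))\circ f$ and proves $(F-F(0))\circ f\in V^{1,p}(\mu)$, then adds back the constant $F(0)$, which trivially satisfies \eqref{ibpeq} with zero gradient. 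Likewise $\abs{g_i}\le\Lip(F)\abs{\partial_i^\mu f}\in L^p(\mu)$. Then I would establish the integration-by-parts identity \eqref{ibpeq} for $F\circ f$: fix $\eta\in C_0^\infty(\R^d)$, write both sides as iterated integrals over $\{e_i\}^\perp\times\R$ using Fubini, and on each line $y+\R e_i$ integrate $\frac{\d}{\d t}(F\circ\tilde f^i)$ against the test function using the one-dimensional integration-by-parts for absolutely continuous functions on the open set $R(\phi^q(y+\cdot e_i))$. Here one uses, as in Lemma \ref{wabscontlem}, that the version $\widetilde{\phi^p}$ vanishes on the complement of that open set, so that the boundary terms arising from the endpoints of the component intervals vanish, exactly matching the structure of \eqref{ibpeq} (which is the weak-derivative statement for $(F\circ f)\phi^p$). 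The bookkeeping mirrors \cite[Proof of Lemma 2.2]{ARZ} and the argument in Proposition \ref{abscontprop}.

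The main obstacle is this last boundary-term step: one must argue that on each line the absolutely continuous function $t\mapsto(F\circ\tilde f^i)(y+te_i)\,\eta(y+te_i)\,\widetilde{\psi_y^p}(t)$ has integral of its derivative equal to zero over each connected component of $R(\psi_y^q)$ (resp. $\hat R(\psi_y)$), because $\widetilde{\psi_y^p}$ vanishes at the endpoints — together with a dominated-convergence argument to pass from compact subintervals to the full (possibly unbounded) components, controlled by $\eta$ having compact support and by the local integrability in (\textbf{Diff}) of $\beta\phi^p$ and of $\nabla(\phi^p)$. Once that is in place, collecting the three resulting terms gives precisely \eqref{ibpeq} for $F\circ f$ with $\nabla^\mu(F\circ f)=(F'\circ f)\nabla^\mu f$, and the inequality \eqref{mkequation} is immediate from $\abs{F'}\le 1$ a.e.\ for the cut-off $F(t)=N\wedge t\vee -N$.
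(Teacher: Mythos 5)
Your proposal is correct in substance and, despite the paper's one-line pointer to the chain rule in \cite[Theorem 2.1.11]{Ziem}, it follows the route the paper actually prepares: Ziemer's own proof approximates $f$ by smooth functions, which is not available in $V^{1,p}(\mu)$ before Theorem \ref{CFthm} is proved, so the line-by-line argument through Proposition \ref{abscontprop}, Lemma \ref{wabscontlem} and the chain rule for a Lipschitz function composed with an absolutely continuous function of one variable is the natural substitute, and these are exactly the ingredients you use. Two points should be sharpened. First, as you observe, the statement implicitly needs $F(0)=0$ unless $\mu$ is finite (otherwise already $F\circ 0\equiv F(0)\notin L^p(\mu)$); this is harmless for the intended application to cut-offs. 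Second, your ``main obstacle'' --- the vanishing of the endpoint contributions of $(F\circ\tilde f^i)\,\eta\,\widetilde{\psi^p_y}$ on the components of $R(\psi^q_y)$ --- is genuine, but the clean resolution is not a bare dominated-convergence passage: note that $\widetilde{\psi^p_y}$ vanishes at \emph{every} endpoint of a component (if $\widetilde{\psi^p_y}(a)>0$ then by continuity $\psi_y^{-q}$ is bounded near $a$, forcing $a\in R(\psi^q_y)$), and the one-sided limits of the product exist because its derivative lies in $L^1$ of the component; these limits are therefore zero whenever $F\circ f$ is bounded, e.g.\ for the cut-offs $F_N(t)=N\wedge t\vee -N$. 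For a general Lipschitz $F$ and unbounded $f$ one should first prove the lemma for $f_N:=F_N\circ f$ and then pass to the limit $N\to\infty$ in \eqref{ibpeq} for $F\circ f_N$ by dominated convergence, using $\abs{F\circ f_N}\le\abs{F(0)}+\Lip(F)\abs{f}$ and $\abs{(F'\circ f_N)\ol{\nabla}f_N}\le\Lip(F)\abs{\ol{\nabla}f}$. With that ordering your argument closes, and \eqref{mkequation} follows as you say from $\abs{F'}\le 1$.
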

\begin{proof}
The claim can be proved arguing similar to \cite[Theorem 2.1.11]{Ziem}.
\end{proof}

\begin{lem}\label{truncationlem}
Suppose that \up{(\textbf{Diff})} holds.
The set of bounded and compactly supported functions in $V^{1,p}(\mu)$ is dense in $V^{1,p}(\mu)$.
\end{lem}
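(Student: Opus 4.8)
The goal is to show that bounded, compactly supported elements of $V^{1,p}(\mu)$ are dense. The natural strategy is a two-step approximation: first cut off in the range (to get boundedness), then cut off in the spatial variable (to get compact support), and control the error in the Sobolev norm at each step via dominated convergence.

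\emph{Step 1: Truncation in the range.} Given $f\in V^{1,p}(\mu)$, set $f_N:=(-N)\vee f\wedge N=F_N\circ f$ where $F_N(t)=(-N)\vee t\wedge N$. By Lemma~\ref{contractionlem}, $f_N\in V^{1,p}(\mu)$ with $\ol{\nabla}f_N=(F_N'\circ f)\,\ol{\nabla}f=\ind{\{\abs{f}\le N\}}\ol{\nabla}f$ $\mu$-a.s., and in particular $\abs{\ol{\nabla}f_N}\le\abs{\ol{\nabla}f}$. As $N\to\infty$, $f_N\to f$ pointwise $\mu$-a.s., $\abs{f_N}\le\abs{f}\in L^p(\mu)$, and $\abs{\ol{\nabla}f_N-\ol{\nabla}f}=\ind{\{\abs{f}>N\}}\abs{\ol{\nabla}f}\to 0$ pointwise with dominating function $\abs{\ol{\nabla}f}\in L^p(\mu)$. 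Lebesgue's dominated convergence theorem gives $\norm{f_N-f}_{1,p,\mu}\to 0$. So it suffices to approximate a bounded $f\in V^{1,p}(\mu)$ by bounded, compactly supported functions.

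\emph{Step 2: Spatial cut-off.} Fix a cut-off function $\chi\in C_0^\infty(\R^d)$ with $\ind{B(0,1)}\le\chi\le\ind{B(0,2)}$ and $\abs{\nabla\chi}\le C$, and set $\chi_R(x):=\chi(x/R)$, so $\abs{\nabla\chi_R}\le C/R$ and $\chi_R\to 1$ pointwise. For bounded $f\in V^{1,p}(\mu)$, the products $f\chi_R$, $f\,\partial_i^\mu\chi_R=f\,\partial_i\chi_R$ and $\chi_R\,\partial_i^\mu f$ all lie in $L^p(\mu)$ (using boundedness of $f$, of $\chi_R$, of $\abs{\nabla\chi_R}$, the finiteness of $\mu$ on bounded sets, and $\ol{\nabla}f\in L^p(\mu;\R^d)$), since $\chi_R\in C_0^\infty\subset V^{1,p}_\loc(\mu)$. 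Hence the Leibniz rule, Corollary~\ref{leibnizcor}, applies: $f\chi_R\in V^{1,p}(\mu)$ is bounded and compactly supported, with $\ol{\nabla}(f\chi_R)=\chi_R\ol{\nabla}f+f\,\nabla\chi_R$. Now $f\chi_R\to f$ pointwise $\mu$-a.s. with $\abs{f\chi_R}\le\abs{f}\in L^p(\mu)$, and
\[\abs{\ol{\nabla}(f\chi_R)-\ol{\nabla}f}\le\abs{1-\chi_R}\,\abs{\ol{\nabla}f}+\frac{C}{R}\abs{f}\ind{B(0,2R)}.\]
The first term tends to $0$ pointwise, dominated by $\abs{\ol{\nabla}f}\in L^p(\mu)$; the second term is bounded by $\tfrac{C}{R}\abs{f}\to 0$ pointwise and dominated by $C\abs{f}\in L^p(\mu)$ (for $R\ge 1$). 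Dominated convergence again yields $\norm{f\chi_R-f}_{1,p,\mu}\to 0$, completing the proof.

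\emph{Main obstacle.} The only delicate point is verifying the integrability hypotheses of Corollary~\ref{leibnizcor} for the pair $(f,\chi_R)$ — in particular that $f\,\partial_i\chi_R\in L^p(\mu)$ and $\chi_R\,\partial_i^\mu f\in L^p(\mu)$. These follow because $\chi_R$ and $\nabla\chi_R$ are bounded and supported in the bounded set $B(0,2R)$, on which $\mu$ is a finite measure (as $w\in L^1_\loc$), and because $f$ is bounded after Step~1 while $\ol{\nabla}f\in L^p(\mu;\R^d)$ globally. No condition beyond (\textbf{Diff}) is needed, since $\chi_R\in C_0^\infty$ is automatically in $V^{1,p}_\loc(\mu)$ with $\nabla^\mu\chi_R=\nabla\chi_R$.
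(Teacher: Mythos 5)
Your proof is correct and follows exactly the route the paper indicates: the paper omits the argument but states that the lemma "follows by a truncation argument from Corollary \ref{leibnizcor} and Lemma \ref{contractionlem}", which is precisely your two-step scheme of range truncation via the contraction lemma followed by a smooth spatial cut-off via the Leibniz rule. The only minor point worth noting is that $F_N'$ is undefined at $\pm N$, so the identity $\ol{\nabla}f_N=\ind{\{\abs{f}\le N\}}\ol{\nabla}f$ implicitly uses that $\ol{\nabla}f=0$ $\mu$-a.e.\ on level sets of $f$, which is part of the standard chain-rule statement being invoked.
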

\begin{proof}
The claim follows by a truncation argument from Corollary \ref{leibnizcor} and Lemma \ref{contractionlem}.
We shall omit the proof.
\end{proof}

Note that the last two statements also hold for $H^{1,p}(\mu)$. Anyhow, the proof
of Lemma \ref{contractionlem} for $H^{1,p}(\mu)$ needs some caution, we refer to \cite[Proposition I.4.7, Example II.2.c)]{MR}.

\section{Proof of Theorem \ref{CFthm}}\label{CFsect}

We arrive at our main result. Our proof is inspired by that of Patrick Cattiaux and
Myriam Fradon in \cite{CF}. See also \cite{Fra}. However, our method in estimating \eqref{differeq} is different
from theirs, as we use maximal function-estimates instead of Fourier transforms.

For all of this section, assume \up{(\textbf{Diff})}.
By Lemma \ref{truncationlem}, bounded and compactly supported functions in $V^{1,p}(\mu)$ are dense.
We will show that a subsequence of a standard mollifier of such a function $f$ converges in $\norm{\cdot}_{1,p,\mu}$-norm to $f$.
The claim will then follow from Lemma \ref{c0prop}.

First, we need to collect some facts about the so-called \textit{centered Hardy--Littlewood maximal function} defined
for $g\in L^1_\loc(\d x)$ by
\[Mg(x):=\sup_{\rho>0}\fint_{B(x,\rho)}\abs{g(y)}\,\d y.\]
We shall need the useful inequality
\begin{equation}\label{hardyineq}\abs{u(x)-u(y)}\le c\abs{x-y}\left[M\abs{\nabla u}(x)+M\abs{\nabla u}(y)\right]\end{equation}
for any $u\in W^{1,p}(\d x)$, for all $x,y\in\R^d\setminus N$, where $N$ is a set of Lebesgue measure zero
and $c$ is a positive constant depending only on $d$ and $p$. For a proof see e.g. \cite[Corollary 4.3]{AK}.
The inequality is credited to L. I. Hedberg \cite{Hed}.

Also for all $u\in L^p(\d x)$, $p\in (1,\infty]$,
\begin{equation}\label{hardyineq2}
\norm{M u}_{L^p}\le c'\norm{u}_{L^p}\end{equation}
by the maximal function theorem \cite[Theorem I.1 (c), p. 5]{Stei1} and $c'>0$ depends only on $d$ and $p$.

For the approximation, we shall prove the following key-lemma. Compare with \cite[Lemma 2.9]{CF}.

\begin{lem}\label{CFlem}
Suppose that \up{(\textbf{Diff})} holds. Let $f\in V^{1,p}(\mu)$ such that $f$ is bounded. Then for every $\zeta\in C_0^\infty(\R^d)$
and every $1\le i\le d$
\begin{equation}\label{CFIbP}
\int\ol{\partial}_i f \zeta \phi\,\d x+\int f\partial_i \zeta \phi\,\d x+\int f\zeta\partial_i\phi\,\d x=0.
\end{equation}
In particular, $f\phi\in W^{1,1}_\loc(\d x)$ and $\partial_i(f\phi)=\phi\partial_i^\mu f+f\partial_i\phi$.
\end{lem}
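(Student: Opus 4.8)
The plan is to establish \eqref{CFIbP} by approximating $f$ with mollifications and passing to the limit in the defining integration-by-parts formula \eqref{ibpeq}, exploiting the product structure $f\zeta\phi^p = (f\zeta\phi)\,\phi^{p-1}$ together with the regularity $\phi\in W^{1,p}_\loc(\d x)$ and $\phi^{p-1}\in W^{1,q}_\loc(\d x)$ from Lemmas \ref{pqlem} and \ref{Wqlem}. First I would rewrite the identity \eqref{ibpeq} in the target form. Take $\eta\in C_0^\infty(\R^d)$; since $f$ is bounded and $\phi^{p-1}\in W^{1,q}_\loc(\d x)$ with $\nabla(\phi^{p-1})=(p-1)\phi^{p-2}\nabla\phi$ by Lemma \ref{Wqlem}, one can justify that $f\eta\phi^{p-1}$ is an admissible test object after a further mollification step, and that the Leibniz rule applies. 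The strategy is: given $\zeta\in C_0^\infty$, choose $\eta:=\zeta\,(\phi^{p-1})_\delta$ where $(\phi^{p-1})_\delta$ is a mollification — no, more cleanly, one proves \eqref{CFIbP} first for $f\in C_0^\infty$ by the classical Leibniz rule, and then extends by density.

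More concretely, the key steps in order are as follows. \emph{Step 1.} For $f\in C^1(\R^d)$ bounded with bounded derivatives, \eqref{CFIbP} is just the classical product rule $\partial_i(f\phi)=\phi\,\partial_i f + f\,\partial_i\phi$ integrated against $\zeta$, valid because $f\phi\in W^{1,1}_\loc(\d x)$ (product of a $C^1_b$ function with a $W^{1,p}_\loc\subset W^{1,1}_\loc$ function) and $\partial_i^\mu f=\partial_i f$ for such $f$. \emph{Step 2.} For general bounded $f\in V^{1,p}(\mu)$, use Lemma \ref{truncationlem} and the mollification machinery: set $f_\eps:=\eta_\eps\ast f$ — but this is circular since $f$ need not be locally integrable against $\d x$. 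Instead, the correct route is to mollify the \emph{measure-level} object: observe $f\phi^p\in L^1_\loc(\d x)$ (by Lemma \ref{locallyintegrablelem}, using that \up{(\textbf{Diff})} implies \up{(\textbf{Ham1})} via Corollary \ref{diffcor}, and boundedness of $f$), hence $\D(f\phi^p)$ is a distribution, and \eqref{ibpeq} says precisely $\partial_i(f\phi^p)=(\partial_i^\mu f)\phi^p + f\beta_i\phi^p$ in $\Dscr'$, i.e. $f\phi^p\in W^{1,1}_\loc(\d x)$ with that weak derivative. \emph{Step 3.} Now divide by $\phi^{p-1}$: formally $f\phi = (f\phi^p)/\phi^{p-1}$. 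Since $\phi^{p-1}\in W^{1,q}_\loc$ and $1/\phi^{p-1}$ need not be locally bounded, one cannot directly apply a quotient rule. Instead write $f\phi\cdot\phi^{p-1} = f\phi^p$ and use the Leibniz rule for the product of the (unknown, to be shown $W^{1,1}_\loc$) function $f\phi$ and $\phi^{p-1}\in W^{1,q}_\loc$: this requires knowing a priori that $f\phi\in W^{1,1}_\loc$. The clean way to get that is to test \eqref{ibpeq} with $\eta:=\zeta\,\chi_\delta$ where $\chi_\delta$ is a smooth cutoff of $\{\phi>\delta\}$, on which $1/\phi^{p-1}$ is bounded, derive \eqref{CFIbP} on that set, and then let $\delta\to 0$ using \up{(\textbf{Ham1})} (which forces $f$ to vanish, in the appropriate sense, where $\phi$ does) to control the boundary contributions.

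The main obstacle I anticipate is \emph{precisely this passage from $\{\phi>\delta\}$ to all of $\R^d$}: on $\{\phi>\delta\}$ everything is a routine weak-derivative computation, but one must show the error terms coming from $\nabla\chi_\delta$ vanish. Here the estimate $\abs{\partial_i\phi}\phi^{p-2}\in L^q_\loc$ from Lemma \ref{pqlem}, the boundedness of $f$, and Hamza's condition \up{(\textbf{Ham1})} (via Lemma \ref{wabscontlem}, giving absolute continuity on lines and the fact that $\widetilde{\phi^p}$, hence $f\phi^p$, vanishes together with $\phi$) are exactly what is needed: on a.e. line parallel to $e_i$, $\widetilde{f\phi^p}$ is absolutely continuous and supported in $R(\psi_y^q)$, so its derivative — which one identifies with $(\partial_i^\mu f)\phi^p + f\beta_i\phi^p$ — already captures the full distributional derivative with no singular part on $\{\phi=0\}$. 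Dividing through by $\phi^{p-1}$ pointwise on lines (where it is legitimate since $\phi>0$ a.e. on $R(\psi_y^q)$) and invoking the one-dimensional Leibniz rule for absolutely continuous functions gives $\frac{\d}{\d t}\widetilde{f\phi}(y+te_i) = \phi\,\partial_i^\mu f + f\,\partial_i\phi$ a.e.; integrating in $y$ and using that $\phi\,\partial_i^\mu f + f\,\partial_i\phi\in L^1_\loc(\d x)$ (Hölder, since $\partial_i^\mu f\in L^p(\mu)$ so $\phi\,\partial_i^\mu f = \phi^{2-p}\cdot(\partial_i^\mu f)\phi^{p-1}\cdot$\,\dots — more simply $\phi\,\partial_i^\mu f\in L^p_\loc(\d x)$ fails in general, so one uses instead $\phi\,\partial_i^\mu f = (\partial_i^\mu f\,\phi)$ with $\partial_i^\mu f\in L^p(\mu)$ and $\phi^{1-p+1}=\phi^{2-p}$; cleanest is to bound $\int_B\abs{\phi\,\partial_i^\mu f}\,\d x \le \bignorm{\partial_i^\mu f}_{L^p(\mu)}(\int_B\phi^{(1-p)q}\phi^p\,\d x)^{1/q}$ which is finite on $B\Subset R(\phi^q)$, and $f\,\partial_i\phi\in L^1_\loc$ since $f$ is bounded and $\nabla\phi\in L^p_\loc\subset L^1_\loc$) upgrades the line-wise statement to $f\phi\in W^{1,1}_\loc(\d x)$ with the asserted weak derivative. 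The identity \eqref{CFIbP} is then the integrated form of this. For $p=1$ the same argument runs with $\phi^{p-1}\equiv 1$, so there is nothing to divide and the statement is immediate from \eqref{ibpeq} once one notes $\beta_i\phi = \partial_i\phi$.
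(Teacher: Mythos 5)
Your final route (after you discard the two false starts, rightly, since mollifying $f$ itself or reducing to $C^1$ functions is circular) is essentially sound but genuinely different from the paper's. You read \eqref{ibpeq} as the statement $f\phi^p\in W^{1,1}_\loc(\d x)$ with $\partial_i(f\phi^p)=(\partial_i^\mu f)\phi^p+f\beta_i\phi^p$, pass to absolutely continuous representatives on lines, divide by $\phi^{p-1}$ on compact subintervals of $R(\psi_y^q)$ (where the continuous representative of $\phi^p$ is bounded away from zero, so the one-dimensional quotient rule applies and yields $\phi\,\partial_i^\mu f+f\,\partial_i\phi$), and then extend by zero across $\{\phi=0\}$. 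The paper never leaves the level of global integral identities: it multiplies by $\phi_M:=\vartheta_M(1/\phi^{p-1})1_{\{\phi>0\}}$, a bounded truncation of $\phi^{1-p}$ whose derivative vanishes on a neighbourhood of $\{\phi=0\}$, shows $\phi_M\in V^{1,p}_\loc(\mu)$, applies the Leibniz rule to $f\phi_M$ inside \eqref{ibpeq}, and lets $M\to\infty$ by dominated convergence using $\abs{\phi_M\phi^{p-1}}\le 1$. The truncation buys exactly what your approach must work for by hand: it never divides near $\{\phi=0\}$, so no boundary analysis is needed; your approach, in exchange, makes the pointwise mechanism of the formula more transparent. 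The one step you only gesture at is the crux of your route: you must actually prove that a continuous function on a line which vanishes on the closed set $\R\setminus R(\psi_y^q)$, is locally absolutely continuous on $R(\psi_y^q)$ with derivative $h\in L^1_\loc$, is absolutely continuous on the whole line with derivative $h\,1_{R(\psi_y^q)}$ (estimate the variation over a finite disjoint family of intervals by $\int\abs{h}$ over the portions of those intervals lying in the regular set, using that $\widetilde{f\phi}$ vanishes at the first and last exit points; the bound $\abs{\widetilde{f\phi}}\le\norm{f}_\infty(\widetilde{\phi^p})^{1/p}$ gives the continuous vanishing extension). This is true and fillable, but it is where the work is, and it should be written out rather than asserted as ``no singular part''. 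Two minor slips: $\phi\,\partial_i^\mu f$ lies in $L^p(\d x)$ globally, since $\int\abs{\phi\,\partial_i^\mu f}^p\,\d x=\norm{\partial_i^\mu f}_{L^p(\mu)}^p$, so the detour you take to get local integrability is unnecessary; and your treatment of $p=1$ is correct and coincides with the paper's.
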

\begin{proof}
For all of the proof fix $1\le i\le d$. For $p=1$, the formula follows from \eqref{ibpeq}. So, let $p\in (1,\infty)$.
Let us first assure ourselves that all three integrals in \eqref{CFIbP} are well defined.
Clearly,
\[\abs{\ol{\partial}_i f\zeta\phi}^p\le\norm{\zeta}_\infty^p\abs{\ol{\partial}_i f}^p\phi^p1_{\supp\zeta}\in L^1(\d x),\]
and hence,
\[\abs{\ol{\partial}_i f\zeta\phi}\in L^1(\d x).\]
A similar argument works for the second integral. The third integral is well defined because by $\phi\in W^{1,p}_\loc(\d x)$ we have that
\[\abs{f\zeta\partial_i\phi}^p\le\norm{f\zeta}_\infty^p\abs{\partial_i\phi}^p1_{\supp\zeta}\in L^1(\d x)\]
and hence,
\[\abs{f\zeta\partial_i\phi}\in L^1(\d x).\]
Let $M\in\N$ and $\vartheta_M\in C_0^\infty(\R)$ with
\[\vartheta_M(t)=t\;\text{for}\;t\in[-M,M],\;\abs{\vartheta_M}\le M+1,\;\abs{\vartheta_M'}\le 1\]
and
\[\supp(\vartheta_M)\subset[-3M,3M].\]
Define
\[\phi_M:=\vartheta_M\left(\frac{1}{\phi^{p-1}}\right)1_{\{\phi>0\}}.\]
Clearly, $\phi_M\in L^p_\loc$. Furthermore, define
\[\Phi_M:=(1-p)\vartheta_M'\left(\frac{1}{\phi^{p-1}}\right)\frac{\partial_i\phi}{\phi^p}1_{\{\phi>0\}}.\]
Since $\vartheta_M'(1/\phi^{p-1})\equiv 0$ on $\{\phi^{p-1}\le 1/(3M)\}$ and
\[\abs{\Phi_M}\le(p-1)\frac{\abs{\partial_i\phi}}{\phi^p}1_{\{\phi^{p-1}>1/(3M)\}}=(p-1)\frac{\abs{\partial_i\phi}}{\phi^p}1_{\{\phi^{p}>(1/(3M))^q\}},\]
hence $\Phi_M\in L^p_\loc$. We claim that $\phi_M\in W^{1,p}_\loc(\d x)$ and that $\partial_i\phi_M=\Phi_M$.
Let $\eps>0$ and define
\[\phi_M^\eps:=\vartheta_M\left(\frac{1}{(\phi+\eps)^{p-1}}\right).\]
Clearly, $\phi_M^\eps\to\phi_M$ in $L^p_\loc$ as $\eps\searrow 0$. Also, by the chain rule for Sobolev functions
(see e.g. \cite[Theorem 2.1.11]{Ziem}),
\[\partial_i\phi_M^\eps=(1-p)\vartheta_M'\left(\frac{1}{(\phi+\eps)^{p-1}}\right)\frac{\partial_i\phi}{(\phi+\eps)^p}
1_{\{\phi+\eps>(3M)^{-1/(p-1)}\}}\]
and
\[\abs{\partial_i\phi_M^\eps}\le (p-1)\frac{\abs{\partial_i\phi}}{(\phi+\eps)^p}1_{\{(\phi+\eps)^{p}>(1/(3M))^q\}}\in L^p_\loc.\]
Hence $\phi_M^\eps\in W^{1,p}_\loc(\d x)$ and $\partial_i\phi_M^\eps\to\Phi_M$ in $L^p_\loc$ as $\eps\searrow 0$.

Since $\phi\in W^{1,p}_\loc(\d x)$ and since $\phi_M$ is bounded, we have that $\phi_M\partial_i\phi\in L^p_\loc$. Also, $\phi\partial_i\phi_M\in L^p_\loc$, since
\begin{equation}\label{betaeq}\abs{\phi\partial_i\phi_M}\le(p-1)\frac{\abs{\partial_i\phi}}{\phi^{p-1}}1_{\{\phi^{p-1}>1/(3M)\}}\le (p-1) 3M\abs{\partial_i \phi}.\end{equation}
Now by the usual Leibniz rule for weak derivatives
\[\phi\phi_M\in W^{1,p}_\loc(\d x)\quad\text{and}\quad\partial_i(\phi\phi_M)=\phi_M\partial_i\phi+(1-p)\vartheta_M'\left(\frac{1}{\phi^{p-1}}\right)\frac{\partial_i\phi}{\phi^{p-1}}\]
where by definition $\partial_i\phi/\phi^{p-1}\equiv 0$ on $\{\phi=0\}$. Consider the term $\phi_M\phi^p$.
Recall that $\phi^p\in W^{1,1}_\loc(\d x)$ by Lemma \ref{pqlem}. As already seen, $\phi\phi_M\in W^{1,p}_\loc(\d x)$.
By Lemma \ref{Wqlem}, $\phi^{p-1}\in W^{1,q}_\loc(\d x)$ and $\partial_i(\phi^{p-1})=(p-1)\phi^{p-2}\partial_i\phi\in L^q_\loc$.
Hence $\phi\phi_M(\partial_i(\phi^{p-1}))\in L^1_\loc$ and $\partial_i(\phi\phi_M)\phi^{p-1}\in L^1_\loc$. It follows that $\phi_M\phi^p\in W^{1,1}_\loc(\d x)$ and by
the Leibniz rule for weak derivatives
\[\partial_i(\phi_M\phi^p)=p\phi_M\phi^{p-1}\partial_i\phi+(1-p)\vartheta_M'\left(\frac{1}{\phi^{p-1}}\right)\partial_i\phi\in L^1_\loc.\]
Let $\zeta\in C_0^\infty(\R^d)$. Applying integration by parts, we see that
\begin{equation}
\int\partial_i \zeta\phi_M\phi^p\,\d x=-p\int \zeta\phi_M\frac{\partial_i\phi}{\phi}\phi^p\,\d x+(p-1)\int \zeta\frac{\partial_i\phi}{\phi^p}\vartheta_M'\left(\frac{1}{\phi^{p-1}}\right)\phi^p\,\d x.
\end{equation}
Moreover, by \eqref{betaeq}, $\partial_i\phi_M\in L^p_\loc(\phi^p\,\d x)$. $\phi_M\in L^p_\loc(\phi^p\,\d x)$ is clear. Therefore
$\phi_M\in V^{1,p}_{\loc}(\mu)$ and
\[\ol{\partial}_i\phi_M=(1-p)\frac{\partial_i\phi}{\phi^p}\vartheta_M'\left(\frac{1}{\phi^{p-1}}\right).\]
The Leibniz rule in Corollary \ref{leibnizcor} also holds in $V^{1,p}_{\loc}(\mu)$, and so we would like to give sense to the expression
$\ol{\partial}_i(f\phi_M)=\phi_M\ol{\partial}_i f+f\ol{\partial}_i\phi_M$. But $\phi_M\in V^{1,p}_{\loc}(\mu)$, $f\in V^{1,p}(\mu)$ and $f$ is bounded,
$f\ol{\partial}_i\phi_M\in L^p_{\loc}(\mu)$ since $f$ is bounded and finally $\phi_M\ol{\partial}_i f\in L^p_{\loc}(\mu)$ since $\phi_M$ is bounded.
Hence $f\phi_M\in V^{1,p}_{\loc}(\mu)$ and the Leibniz rule holds (locally). By definition of $\ol{\partial}_i$ for $\zeta\in C_0^\infty(\R^d)$
\begin{equation}\label{Mlimeq}\begin{split}
\int\ol{\partial}_i f\zeta\phi_M\phi^p\,\d x=&(p-1)\int f\zeta\frac{\partial_i\phi}{\phi^p}\vartheta_M'\left(\frac{1}{\phi^{p-1}}\right)\phi^p\,\d x\\
&-\int f\partial_i \zeta\phi_M\phi^p\,\d x-p\int f\zeta\phi_M\frac{\partial_i\phi}{\phi}\phi^p\,\d x
\end{split}\end{equation}
Now let $M\to\infty$ in \eqref{Mlimeq}. Note that
\[\phi_M\to(1/\phi^{p-1})1_{\{\phi>0\}}\]
$\d x$-a.s. and
\[\vartheta_M'(1/\phi^{p-1})\to 1\]
$\d x$-a.s.
In order to apply Lebesgue's dominated convergence theorem, we verify
\[\abs{\ol{\partial}_i f \zeta\phi_M\phi^p}\le 2\abs{\ol{\partial}_i f\phi}\norm{\zeta}_\infty 1_{\supp \zeta}\in L^1(\d x),\]
where we have used that
\[\abs{\phi_M\phi^{p-1}}\le 1,\]
because $\vartheta_M$ is Lipschitz and $\vartheta_M(0)=0$.
Furthermore,
\begin{align*}
\abs{f\zeta\partial_i\phi\vartheta_M'\left(1/\phi^{p-1}\right)}&\le\abs{f\partial_i\phi}\norm{\zeta}_\infty 1_{\supp \zeta}\in L^1(\d x),\\
\abs{f\partial_i \zeta\phi_M\phi^p}&\le 2\abs{f\phi}\norm{\partial_i \zeta}_\infty 1_{\supp \zeta}\in L^1(\d x),\\
&\text{and}\\
\abs{f\zeta\phi_M\partial_i\phi\phi^{p-1}}&\le 2\abs{f\partial_i \phi}\norm{\zeta}_\infty 1_{\supp \zeta}\in L^1(\d x).
\end{align*}
The formula obtained, when passing on to the limit $M\to\infty$ in \eqref{Mlimeq}, is exactly the desired statement.
\end{proof}

Below, we shall need a lemma on difference quotients. Compare with \cite[Proof of Lemma 7.23]{GilTru} and \cite[Theorem 2.1.6]{Ziem}.

\begin{lem}\label{diffqlem}
Let $z\in B(0,1)\subset\R^d$ and $u\in W^{1,p}(\d x)$.
Set for $\eps>0$
\[\Delta_\eps u(x):=\frac{u(x-\eps z)-u(x)}{\eps}\]
for some representative of $u$.
Then
\[\norm{\Delta_\eps u+\lrbr{\nabla u}{z}}_{L^p(\d x)}\to 0\]
as $\eps\searrow 0$.
\end{lem}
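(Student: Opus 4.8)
The plan is to prove the convergence $\norm{\Delta_\eps u + \lrbr{\nabla u}{z}}_{L^p(\d x)}\to 0$ by the standard two-step scheme: first verify it on the dense subclass $C_0^\infty(\R^d)\subset W^{1,p}(\d x)$, then transfer to general $u$ by a uniform bound on the difference-quotient operators $\Delta_\eps$ together with a density argument. For smooth compactly supported $u$, the fundamental theorem of calculus gives $\Delta_\eps u(x) = -\int_0^1 \lrbr{\nabla u(x-s\eps z)}{z}\,\d s$, so that
\[
\Delta_\eps u(x)+\lrbr{\nabla u(x)}{z} = \int_0^1 \bigl\langle \nabla u(x)-\nabla u(x-s\eps z),z\bigr\rangle\,\d s.
\]
By Minkowski's integral inequality and $\abs{z}\le 1$, the $L^p$-norm of the left-hand side is bounded by $\sup_{0\le s\le 1}\norm{\nabla u(\cdot) - \nabla u(\cdot - s\eps z)}_{L^p(\d x;\R^d)}$, which tends to $0$ as $\eps\searrow 0$ by continuity of translation in $L^p$ (for $1\le p<\infty$; the case $p$ finite is exactly what we need, and $\nabla u$ is uniformly continuous with compact support anyway).

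Next I would establish the uniform estimate $\norm{\Delta_\eps u}_{L^p(\d x)} \le \norm{\nabla u}_{L^p(\d x;\R^d)}$ for all $u\in W^{1,p}(\d x)$ and all $\eps>0$, with constant independent of $\eps$. This is the key mechanism that lets the approximation pass to the limit. For smooth $u$ it follows from the same integral representation: $\abs{\Delta_\eps u(x)}\le \int_0^1\abs{\nabla u(x-s\eps z)}\,\d s$, and applying Minkowski's integral inequality plus translation invariance of Lebesgue measure gives $\norm{\Delta_\eps u}_{L^p}\le\int_0^1\norm{\nabla u(\cdot - s\eps z)}_{L^p}\,\d s = \norm{\nabla u}_{L^p}$. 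For general $u\in W^{1,p}(\d x)$ one obtains this either by mollification (noting $\Delta_\eps$ commutes with convolution) or directly from the absolute-continuity-on-lines characterisation of $W^{1,p}$; the bound persists in the limit.

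Finally I would combine the two ingredients. Given $u\in W^{1,p}(\d x)$ and $\delta>0$, pick $v\in C_0^\infty(\R^d)$ with $\norm{u-v}_{1,p}<\delta$. Then
\[
\norm{\Delta_\eps u+\lrbr{\nabla u}{z}}_{L^p}\le \norm{\Delta_\eps(u-v)}_{L^p} + \norm{\Delta_\eps v+\lrbr{\nabla v}{z}}_{L^p} + \norm{\lrbr{\nabla(v-u)}{z}}_{L^p},
\]
and the first term is $\le\norm{\nabla(u-v)}_{L^p}<\delta$ by the uniform bound, the third is $\le\delta$, and the middle term is $<\delta$ for $\eps$ small by the smooth case. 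Hence $\limsup_{\eps\searrow 0}\norm{\Delta_\eps u+\lrbr{\nabla u}{z}}_{L^p}\le 2\delta$ for every $\delta>0$, which gives the claim. The only mildly delicate point — and the main obstacle, though a routine one — is making sure the uniform $L^p$-bound for $\Delta_\eps$ is valid on all of $W^{1,p}(\d x)$ and not merely on the smooth subclass, i.e. justifying the representative-independence and the interchange of $\Delta_\eps$ with approximation; this is handled exactly as in \cite[Theorem 2.1.6]{Ziem} or \cite[Proof of Lemma 7.23]{GilTru}, to which one may simply refer.
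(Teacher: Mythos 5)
Your proposal is correct and follows essentially the same route as the paper: establish the convergence for smooth functions via the fundamental theorem of calculus and continuity of translations in $L^p$, then pass to general $u\in W^{1,p}(\d x)$ by density. The only difference is cosmetic — you use Minkowski's integral inequality where the paper uses Fubini/Jensen, and you spell out the uniform bound $\norm{\Delta_\eps u}_{L^p}\le\norm{\nabla u}_{L^p}$ and the three-term triangle inequality that the paper delegates to the cited references \cite{Ziem,GilTru}.
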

\begin{proof}
Start with $u\in C^1\cap W^{1,p}(\d x)$. By the fundamental theorem of calculus
\[\Delta_\eps u(x)=-\frac{1}{\eps}\int_0^\eps \lrbr{\nabla u(x-sz)}{z}\,\d s.\]
Use Fubini's theorem to get
\begin{equation}\label{epsintint}\int\lrabs{\Delta_\eps u(x)+\lrbr{\nabla u(x)}{z}}^p\,\d x=\frac{1}{\eps}\int_{0}^\eps\int\lrabs{\lrbr{\nabla u(x-sz)}{z}-\lrbr{\nabla u(x)}{z}}^p\,\d x\,\d s.\end{equation}
By a well known property of $L^p$-norms \cite[p. 63]{Stei1} the map
\[s\mapsto\int\lrabs{\lrbr{\nabla u(x-sz)}{z}-\lrbr{\nabla u(x)}{z}}^p\,\d x\]
is continuous in zero. Hence $s=0$ is a Lebesgue point of this map. Therefore the right-hand side of
\eqref{epsintint} tends to zero as $\eps\searrow 0$. The claim can be extended to functions in
$W^{1,p}(\d x)$ by an approximation by smooth functions as e.g. in \cite[Theorem 2.3.2]{Ziem}.
\end{proof}

\begin{proof}[Proof of Theorem \ref{CFthm}]
Let $f\in V^{1,p}(\mu)$ be (a class of) a function which is bounded and compactly supported. By Lemma \ref{truncationlem},
we are done if we can approximate $f$ by $C_0^\infty$-functions. Let $\{\eta_\eps\}_{\eps>0}$ be a
standard mollifier. Since $f$ is bounded and compactly supported,
$\eta_\eps\ast f\in C_0^\infty(\R^d)$ with $\supp(\eta_\eps\ast f)\subset\supp f+\eps B(0,1)$ and $\abs{\eta_\eps\ast f}\le\norm{f}_\infty$.
We claim that there exists a sequence $\eps_n\searrow 0$ such that $\eta_{\eps_n}\ast f$ converges to $f$ in $V^{1,p}(\mu)$.
The $L^p(\mu)$-part is easy. Since $\eta_\eps\ast f,f\in L^1(\d x)$, $\lim_{\eps\searrow 0}\norm{\eta_\eps\ast f-f}_{L^1(\d x)}=0$. Therefore
we can extract a subsequence $\{\eps_n\}$ such that $\eta_{\eps_n}\ast f\to f$ $\d x$-a.s. For $\eps_n\le 1$
\[\abs{(\eta_{\eps_n}\ast f)\phi-f\phi}^p\le 2^p\norm{f}_\infty^p\abs{\phi}^p 1_{\supp f+B(0,1)}\in L^1(\d x).\]
By Lebesgue's dominated convergence theorem, $\lim_n\norm{\eta_{\eps_n}\ast f-f}_{L^p(\mu)}=0$.

Fix $1\le i\le d$.
We are left to prove $\partial_i(\eta_{\eps_n}\ast f)\to\ol{\partial}_i f$ in $L^p(\mu)$ for some sequence $\eps_n\searrow 0$.
Or equivalently,
\[\phi\partial_i(\eta_{\eps_n}\ast f)\to\phi\ol{\partial}_i f\quad\text{in}\;L^p(\d x).\]
Write
\begin{equation}\label{rhs}\begin{split}
&\int\abs{\phi\partial_i(\eta_\eps\ast f)-\phi\ol{\partial}_i f}^p\,\d x\\
\le&2^{p-1}\left[\int\abs{\phi\ol{\partial}_i f-(\eta_\eps\ast(\phi\ol{\partial}_i f))}^p\,\d x+\int\abs{(\eta_\eps\ast(\phi\ol{\partial}_i f))-\phi\partial_i(\eta_\eps\ast f)}^p\,\d x\right].
\end{split}\end{equation}
The first term tends to zero as $\eps\searrow 0$ by a well known fact \cite[Theorem III.2 (c), p. 62]{Stei1}. We continue with studying the second term.
Recall that $\eta_\eps(x)=\eta_\eps(\abs{x})$.
\begin{align*}
&\int\abs{\phi\partial_i(\eta_\eps\ast f)-(\eta_\eps\ast(\phi\ol{\partial}_i f))}^p\,\d x\\
=&\int\lrabs{\phi(x)\int\partial_i\eta_\eps(x-y)f(y)\,\d y-\int\eta_\eps(x-y)\phi(y)\ol{\partial}_i f(y)\,\d y}^p\,\d x\\
=&\int\bigg\lvert\int\partial_i\eta_\eps(x-y)f(y)[\phi(x)-\phi(y)]\,\d y\\
&+\int\partial_i\eta_\eps(x-y)f(y)\phi(y)-\eta_\eps(x-y)\phi(y)\ol{\partial}_i f(y)\,\d y\bigg\rvert^p\,\d x\\
&\text{apply Lemma \ref{CFlem} with $\zeta(y):=\eta_\eps(x-y)$}\\
&\text{and noting that $\partial_i\eta_\eps(x-y)=\frac{\partial}{\partial x_i}\eta_\eps(x-y)=-\frac{\partial}{\partial y_i}\eta_\eps(x-y)$}\\
=&\int\lrabs{\int\partial_i\eta_\eps(x-y)f(y)[\phi(x)-\phi(y)]\,\d y+\int\eta_\eps(x-y)f(y)\partial_i\phi(y)\,\d y}^p\,\d x\\
\le&2^{p-1}\left[\int\lrabs{\int\partial_i\eta_\eps(x-y)f(y)[\phi(x)-\phi(y)]\,\d y}^p\,\d x
+\int\lrabs{\eta_\eps\ast (f\partial_i\phi)}^p\,\d x\right]\\
\le&2^{p-1}\int\lrabs{\int\partial_i\eta_\eps(x-y)f(y)[\phi(x)-\phi(y)]\,\d y}^p\,\d x+2^{p-1}\norm{f\partial_i\phi}_{L^p(\d x)}^p.
\end{align*}
We would like to control the first term. Replace $\phi$ by $\hat{\phi}\in W^{1,p}(\d x)$ defined by:
\[\hat{\phi}=\phi\xi\;\;\text{with}\;\;\xi\in C_0^\infty(\R^d)\;\;\text{and}\;\;1_{\supp f+B(0,2)}\le\xi\le 1_{\supp f+B(0,3)}.\]
Let $h_\eps:\R^d\to\R^d$, $h_\eps(x):=-\eps x$. Then upon substituting $y=x+\eps z$ (which leads to $\d y=\eps^d\,\d z$)
\begin{align*}
&\int\lrabs{\int\partial_i\eta_\eps(x-y)f(y)\left[\phi(x)-\phi(y)\right]\,\d y}^p\,\d x\\
=&\int\lrabs{\int_{B(0,1)}\partial_i\eta_\eps(-\eps z)f(x+\eps z)\left[\hat{\phi}(x)-\hat{\phi}(x+\eps z)\right]\eps^d\,\d z}^p\,\d x
\end{align*}
By the chain rule $-\eps(\partial_i\eta_\eps)(-\eps z)=\partial_i(\eta_\eps\circ h_\eps)(z)=(1/\eps^d)\partial_i(\eta)(z)$
and hence the latter is equal to
\begin{align*}
&\int\lrabs{\int_{B(0,1)}\partial_i\eta(z)f(x+\eps z)\frac{\hat{\phi}(x)-\hat{\phi}(x+\eps z)}{\eps}\,\d z}^p\,\d x\\
\le&2^{p-1}\int\lrabs{\int_{B(0,1)}\partial_i\eta(z)f(x+\eps z)\lrbr{-\nabla\hat{\phi}(x+\eps z)}{z}\,\d z}^p\,\d x\\
&+2^{p-1}\\
&\times\int\lrabs{\int_{B(0,1)}\partial_i\eta(z)f(x+\eps z)\left[\frac{\hat{\phi}(x)-\hat{\phi}(x+\eps z)}{\eps}+\lrbr{\nabla\hat{\phi}(x+\eps z)}{z}\right]\,\d z}^p\,\d x
\end{align*}
By Jensen's inequality and Fubini's theorem, the first term is bounded by
\[C(p,d)\norm{\partial_i\eta}_\infty^p\sum_{j=1}^d\norm{f\partial_j\phi}_{L^p(\d x)}^p,\]
where $C(p,d)$ is a positive constant depending only on $p$ and $d$.

Concerning the second term, we use again Jensen's inequality and Fubini's theorem to see that it is bounded by
\begin{equation}\label{Cpdeq}C'(p,d)\norm{\partial_i\eta}_\infty^p\norm{f}_\infty^p\int_{B(0,1)}\int\lrabs{\frac{\hat{\phi}(x)-\hat{\phi}(x+\eps z)}{\eps}+\lrbr{\nabla\hat{\phi}(x+\eps z)}{z}}^p\,\d x\,\d z,\end{equation}
where $C'(p,d)$ is a positive constant depending only on $p$ and $d$. Let us investigate the inner integral.

By variable substitution, we get that the inner integral in \eqref{Cpdeq} is equal to
\begin{equation}\label{differeq}\int\lrabs{\frac{\hat{\phi}(x-\eps z)-\hat{\phi}(x)}{\eps}+\lrbr{\nabla\hat{\phi}(x)}{z}}^p\,\d x.\end{equation}
By Lemma \ref{diffqlem}, the term converges to zero pointwise as $\eps\searrow 0$ for each fixed $z\in B(0,1)$.

By inequality \eqref{hardyineq},
for $\d z$-a.a. $z\in B(0,1)$
\begin{multline*}\int\lrabs{\frac{\hat{\phi}(x-\eps z)-\hat{\phi}(x)}{\eps}+\lrbr{\nabla\hat{\phi}(x)}{z}}^p\,\d x\\
\le C(p,d)\norm{M\abs{\nabla\hat{\phi}}}_{L^p(\d x)}^p\abs{z}^p 1_{B(0,1)}(z),\end{multline*}
where $M$ denotes the centered Hardy-Littlewood maximal function.
If $p\in (1,\infty)$, then $\hat{\phi}\in W^{1,p}(\d x)$ and the right-hand side is in
$L^1(\d z)$ by estimate \eqref{hardyineq2}.
If $p=1$, then $\nabla\hat{\phi}\in L^\infty(\d x)$ by \eqref{LipGrad} and
\begin{multline*}\int\lrabs{\frac{\hat{\phi}(x-\eps z)-\hat{\phi}(x)}{\eps}+\lrbr{\nabla\hat{\phi}(x)}{z}}\,\d x\\
\le C(d,\supp f)\norm{M\abs{\nabla\hat{\phi}}}_{L^\infty(\d x)}\abs{z}^p 1_{B(0,1)}(z)\end{multline*}
and the right-hand side is again in
$L^1(\d z)$ by estimate \eqref{hardyineq2}.

The desired convergence to zero as $\eps\searrow 0$ follows now by the preceding discussion and
Lebesgue's dominated convergence theorem.

We have proved that
\begin{equation}\label{thetaestimate}
\begin{split}
&\int\abs{\phi\partial_i(\eta_\eps\ast f)-(\eta_\eps\ast(\phi\ol{\partial}_i f))}^p\,\d x\\
\le&C(d,p,\supp f,\eta)\left[\sum_{j=1}^d\norm{f\partial_j\phi}_{L^p(\d x)}^p+\norm{f}_\infty^p\theta(\eps)\right]
\end{split}\end{equation}
with $\theta(\eps)\to 0$ as $\eps\searrow 0$, and $\theta$ depends only on $\supp f$.

We shall go back to the right-hand side of \eqref{rhs}. Let $f_\delta:=\eta_\delta\ast f$ for $\delta>0$. By Lebesgue's dominated convergence theorem again, we can prove
that there is a subnet (also denoted by $\{f_\delta\}$), such that
\begin{equation}\label{deltaconvbeta}\sum_{j=1}^d\norm{(f-f_\delta)\partial_j\phi}_{L^p(\d x)}^p\to 0\end{equation}
as $\delta\searrow 0$. Taking \eqref{thetaestimate} into account, ($f$ replaced by $f-f_\delta$ therein), we get that
\begin{align*}
&\norm{\phi\partial_i(\eta_\eps\ast f)-(\eta_\eps\ast(\phi\ol{\partial}_i f))}^p_{L^p(\d x)}\\
\le&2^{p-1}\norm{\phi\partial_i(\eta_\eps\ast (f-f_\delta))-(\eta_\eps\ast(\phi\ol{\partial}_i (f-f_\delta)))}^p_{L^p(\d x)}\\
&+2^{p-1}\norm{\phi\partial_i(\eta_\eps\ast f_\delta)-(\eta_\eps\ast(\phi\ol{\partial}_i f_\delta))}^p_{L^p(\d x)}\\
\le&C(d,p,\supp f)\left[\sum_{j=1}^d\norm{(f-f_\delta)\partial_j\phi}_{L^p(\d x)}^p+\norm{f-f_\delta}_\infty^p\theta(\eps)\right]\\
&+2^{p-1}\norm{\phi\partial_i(\eta_\eps\ast f_\delta)-(\eta_\eps\ast(\phi\ol{\partial}_i f_\delta))}^p_{L^p(\d x)}.
\end{align*}
The use of \eqref{thetaestimate} is justified, since $\hat{\phi}=\phi$ on $\supp f+B(0,2)$, thus on $\supp(f-f_\delta)+B(0,1)$.
Taking \eqref{deltaconvbeta} into account, by choosing first $\delta$ and then letting $\eps\searrow 0$, the first term above can be controlled (since $\norm{f-f_\delta}_\infty\le 2\norm{f}_\infty$).
If we can prove for any $\zeta\in C_0^\infty$
\begin{equation}\label{betalast}
\norm{\phi\partial_i(\eta_\eps\ast\zeta)-(\eta_\eps\ast(\phi\ol{\partial}_i\zeta))}^p_{L^p(\d x)}\to 0
\end{equation}
as $\eps\searrow 0$, we can control the second term above and hence are done. But
\begin{align*}
&\norm{\phi\partial_i(\eta_\eps\ast\zeta)-(\eta_\eps\ast(\phi\partial_i\zeta))}^p_{L^p(\d x)}\\
\le&\int\lrabs{\int\eta_\eps(x-y)\partial_i\zeta(y)\left[\phi(x)-\phi(y)\right]\,\d y}^p\,\d x.
\end{align*}
Substituting $y=x+\eps z$ ($\d y=\eps^d\,\d z$) and using Jensen's inequality and Fubini's theorem again,
the latter is dominated by
\[
C(d,p)\norm{\eta}_\infty^p\norm{\partial_i\zeta}^p_\infty\int_{B(0,1)}\norm{(\phi\xi_\zeta)(\cdot)-(\phi\xi_\zeta)(\cdot+\eps z)}^p_{L^p(\d x)}\,\d z,
\]
where $\xi_\zeta\in C_0^\infty(\R^d)$ with $\xi_\zeta\equiv 1$ on $\supp \zeta+B(0,1)$.
\[\norm{(\phi\xi_\zeta)(\cdot)-(\phi\xi_\zeta)(\cdot+\eps z)}^p_{L^p(\d x)}\]
tends to zero as $\eps\searrow 0$ again by \cite[p. 63]{Stei1}.
By inequalities \eqref{hardyineq} and \eqref{hardyineq2} for $\d z$-a.a. $z\in B(0,1)$
\[\norm{(\phi\xi_\zeta)(\cdot)-(\phi\xi_\zeta)(\cdot+\eps z)}^p_{L^p(\d x)}\le c(d,p)\norm{\nabla(\phi\xi_\zeta)}_{L^p(\d x)}^p\abs{\eps z}^p1_{B(0,1)}\in L^1(\d z),\]
for $p\in (1,\infty)$,
and together with \eqref{LipGrad}, for $p=1$,
\begin{multline*}\norm{(\phi\xi_\zeta)(\cdot)-(\phi\xi_\zeta)(\cdot+\eps z)}_{L^1(\d x)}\\
\le c(d,p,\supp f)\norm{\nabla(\phi\xi_\zeta)}_{L^\infty(\d x)}\abs{\eps z} 1_{B(0,1)}\in L^1(\d z).
\end{multline*}
Thus we can apply Lebesgue's dominated convergence theorem.

The proof is complete.\end{proof}

\section{The Kufner-Sobolev space $W^{1,p}(\mu)$}\label{Kufsec}

We shall briefly deal with the Kufner-Sobolev space $W^{1,p}(\mu)$ first introduced in \cite{Kuf} and studied e.g. in \cite{KO,KufSae,Oh}.

\begin{defi}
Assume \up{(\textbf{Reg})}. Let
\[W^{1,p}(\mu):=\left\{u\in L^p(\mu),\;\vert\;\D u\in L^p(\mu;\R^d)\right\}.\]
\end{defi}

Note that in the above definition, by \up{(\textbf{Reg})} and Lemma \ref{locallyintegrablelem},
$u\in L^1_\loc$ and hence $\D u$ is well defined.

\begin{prop}\label{Wprop}
Assume \up{(\textbf{Reg})}. Then $W^{1,p}(\mu)$ is a Banach space with the obvious choice of a norm.
Also, by definition $H^{1,p}(\mu)\subset W^{1,p}(\mu)$. Moreover, for all $u\in H^{1,p}(\mu)$, $\nabla^\mu u=\D u$ $\d x$-a.s.
\end{prop}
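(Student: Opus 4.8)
The plan is to prove Proposition~\ref{Wprop} in three stages: first the Banach-space assertion, then the inclusion $H^{1,p}(\mu)\subset W^{1,p}(\mu)$, and finally the identification of the gradients. For completeness of $W^{1,p}(\mu)$, let $\{u_n\}$ be a Cauchy sequence in the graph norm $(\norm{\cdot}_{L^p(\mu)}^p+\norm{\D\cdot}_{L^p(\mu;\R^d)}^p)^{1/p}$. By Riesz--Fischer, $u_n\to u$ in $L^p(\mu)$ and $\D u_n\to v$ in $L^p(\mu;\R^d)$ for some $u\in L^p(\mu)$, $v\in L^p(\mu;\R^d)$. Under \up{(\textbf{Reg})} the measures $\mu$ and $\d x$ are equivalent and, by Lemma~\ref{locallyintegrablelem}, $L^p(\mu)\hookrightarrow L^1_\loc(\d x)$ continuously (for $p>1$ on $R(\phi^q)=\R^d$; for $p=1$ on $\hat{R}(\phi)=\R^d$), so $u_n\to u$ and $\D u_n\to v$ in $L^1_\loc(\d x)$. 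Testing the definition $\int\D u_n\,\eta\,\d x=-\int u_n\,\n\eta\,\d x$ against $\eta\in C_0^\infty$ and passing to the limit yields $\D u=v$, hence $u\in W^{1,p}(\mu)$ and $u_n\to u$ in the graph norm. This also shows $\D$ is a closed operator, so the norm is genuinely a Banach norm.

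Next, the inclusion $H^{1,p}(\mu)\subset W^{1,p}(\mu)$ together with the gradient identification. The natural route is: $C^\infty$-functions in $X$ (equivalently, by Proposition~\ref{c0prop}, $C_0^\infty$-functions) are obviously in $W^{1,p}(\mu)$ with $\D u=\n u=\n^\mu u$, and both spaces are complete, so one wants to pass to limits. Concretely, take $u\in H^{1,p}(\mu)$ and a sequence $\{u_n\}\subset C_0^\infty$ with $\norm{u_n-u}_{1,p,\mu}\to 0$; in particular $u_n\to u$ in $L^p(\mu)$ and $\n u_n=\n^\mu u_n\to \n^\mu u$ in $L^p(\mu;\R^d)$. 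Again using Lemma~\ref{locallyintegrablelem} to embed into $L^1_\loc(\d x)$, both convergences hold in $L^1_\loc(\d x)$, and the distributional identity $\int\n u_n\,\eta\,\d x=-\int u_n\,\n\eta\,\d x$ passes to the limit to give $\D u=\n^\mu u\in L^p(\mu;\R^d)$. Hence $u\in W^{1,p}(\mu)$ with $\D u=\n^\mu u$ $\d x$-a.e. (equivalently $\mu$-a.e., by equivalence of measures). The embedding into $W^{1,p}(\mu)$ is then also norm-continuous since the two graph norms literally coincide on $H^{1,p}(\mu)$.

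I expect the only subtle point — and the one to state carefully rather than the main obstacle — to be the justification that convergence in $L^p(\mu)$ implies convergence in $L^1_\loc(\d x)$, which is exactly what \up{(\textbf{Reg})} buys us via Lemma~\ref{locallyintegrablelem} (and why \up{(\textbf{Reg})}, not merely \up{(\textbf{Ham})}, is assumed here): without it, an element of $L^p(\mu)$ need not be a distribution at all, and $\D u$ would be meaningless. One should also note explicitly that the limiting distributional gradient does not depend on the approximating sequence, which follows from uniqueness of distributional derivatives once everything lives in $L^1_\loc(\d x)$. Everything else is routine functional analysis, so the proof is short; the write-up should simply make the $L^1_\loc$-reduction explicit at each limit step.
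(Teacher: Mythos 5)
Your proof is correct, and it is exactly the standard argument: the paper itself gives no details here but simply cites \cite[Theorem 1.11]{KO} and \cite[\S 1.9]{HKM}, where the same reasoning (use \up{(\textbf{Reg})} to embed $L^p(\mu)$ into $L^1_\loc(\d x)$ via Lemma \ref{locallyintegrablelem}, then pass distributional identities to the limit to get closedness of $\D$ and the identification $\D u=\nabla^\mu u$ on limits of smooth functions) is carried out. You have correctly isolated the one genuinely relevant point, namely that \up{(\textbf{Reg})} is what makes every $L^p(\mu)$-limit a regular distribution, so nothing is missing.
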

\begin{proof}
See \cite[Theorem 1.11]{KO} and \cite[\S 1.9]{HKM}.
\end{proof}

The following well known result demonstrates
the power of maximal functions. We include its proof for the sake of completeness.
\begin{prop}\label{Muckprop}
Assume $1<p<\infty$. Assume that there is a global constant $K>0$ such that
\begin{equation}\label{Muckeq2}
\left(\fint_B \phi^p\,\d x\right)\cdot\left(\fint_B \phi^{-q}\,\d x\right)^{p-1}\le K,\end{equation}
for all balls $B\subset\R^d$. Then $H^{1,p}(\mu)=W^{1,p}(\mu)$.
\end{prop}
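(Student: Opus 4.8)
The plan is to prove the stronger statement that $C^\infty(\R^d)$-functions of finite Sobolev norm --- hence, a fortiori, the space $X$ --- are dense in $W^{1,p}(\mu)$. Since by Proposition \ref{Wprop} the space $H^{1,p}(\mu)$ embeds isometrically (w.r.t. $\norm{\cdot}_{1,p,\mu}$) into the Banach space $W^{1,p}(\mu)$, and since $X$ is dense in $H^{1,p}(\mu)$ by construction, $H^{1,p}(\mu)$ is exactly the $\norm{\cdot}_{1,p,\mu}$-closure of $X$ inside $W^{1,p}(\mu)$; together with the inclusion $H^{1,p}(\mu)\subset W^{1,p}(\mu)$ of Proposition \ref{Wprop}, that density gives the claim. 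First I would record the preliminaries: condition \eqref{Muckeq2} forces $\phi^{-q}\in L^1_\loc(\d x)$, so \up{(\textbf{Reg})} holds, $\mu$ and $\d x$ are equivalent measures, and $W^{1,p}(\mu)$ is well defined; moreover, H\"older's inequality together with Lemma \ref{locallyintegrablelem} yields $W^{1,p}(\mu)\subset W^{1,1}_\loc(\d x)$, i.e. every $u\in W^{1,p}(\mu)$ and its Schwartz gradient $\D u$ lie in $L^1_\loc(\d x)$. The one nontrivial ingredient, playing the role of the unweighted maximal inequality \eqref{hardyineq2}, is Muckenhoupt's theorem \cite{Muck72} (see also \cite{Tur}): \eqref{Muckeq} implies that the centered Hardy--Littlewood maximal operator $M$ is bounded on $L^p(\mu)$.

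Next I would run a one-step mollification. Fix $u\in W^{1,p}(\mu)$ and set $u_\eps:=\eta_\eps\ast u$ for a standard mollifier $\{\eta_\eps\}$; since $u,\D u\in L^1_\loc(\d x)$ we have $u_\eps\in C^\infty(\R^d)$ and $\nabla u_\eps=\eta_\eps\ast\D u$. The pointwise bound
\[\abs{\eta_\eps\ast g}(x)\le\norm{\eta}_\infty\abs{B(0,1)}\fint_{B(x,\eps)}\abs{g}\,\d y\le C_\eta\,Mg(x),\qquad g\in L^1_\loc(\d x),\]
applied to $g=u$ and to $g=\partial_i u$, combined with the $L^p(\mu)$-boundedness of $M$, shows $\norm{u_\eps}_{1,p,\mu}\le C_\eta C_{p,w}\norm{u}_{1,p,\mu}<\infty$, so that $u_\eps\in X\subset H^{1,p}(\mu)$. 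Furthermore $\eta_\eps\ast g\to g$ holds $\d x$-a.e. (by the Lebesgue differentiation theorem, since $\abs{\eta_\eps\ast g(x)-g(x)}\le C_\eta\fint_{B(x,\eps)}\abs{g(y)-g(x)}\,\d y$), hence $\mu$-a.e. because $\mu\sim\d x$. As $\abs{u_\eps-u}\le C_\eta Mu+\abs{u}\in L^p(\mu)$ and $\abs{\nabla u_\eps-\D u}\le C_\eta M\abs{\D u}+\abs{\D u}\in L^p(\mu)$, Lebesgue's dominated convergence theorem gives $u_\eps\to u$ in $L^p(\mu)$ and $\nabla u_\eps\to\D u$ in $L^p(\mu;\R^d)$, i.e. $\norm{u_\eps-u}_{1,p,\mu}\to 0$.

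Hence every $u\in W^{1,p}(\mu)$ lies in the $\norm{\cdot}_{1,p,\mu}$-closure of $X$, which is $H^{1,p}(\mu)$; combined with $H^{1,p}(\mu)\subset W^{1,p}(\mu)$ this yields $H^{1,p}(\mu)=W^{1,p}(\mu)$. I do not expect a genuine obstacle: the argument is a plain mollification and its only real content is Muckenhoupt's maximal inequality, which is quoted. The points requiring a little care are the inclusion $W^{1,p}(\mu)\subset W^{1,1}_\loc(\d x)$ --- which legitimizes forming $\eta_\eps\ast u$ and the identity $\nabla(\eta_\eps\ast u)=\eta_\eps\ast\D u$ --- and the identification of the abstract completion $H^{1,p}(\mu)$ with the closure of $X$ in $W^{1,p}(\mu)$. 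If one prefers to conclude with honest $C_0^\infty$-density, one may first reduce to bounded, compactly supported $u$ by truncation and a cut-off (the chain and product rules being at hand since $W^{1,p}(\mu)\subset W^{1,1}_\loc(\d x)$) before mollifying, or simply invoke Proposition \ref{c0prop}.
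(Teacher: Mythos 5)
Your proposal is correct and follows essentially the same route as the paper: mollification, the pointwise domination of $\eta_\eps\ast u$ and $\nabla(\eta_\eps\ast u)=\eta_\eps\ast\D u$ by the Hardy--Littlewood maximal function, Muckenhoupt's weighted maximal theorem for $w\in A_p$, and dominated convergence. The only cosmetic difference is that the paper first restricts to bounded, compactly supported $f$ and extracts an a.e.-convergent subsequence from $L^1(\d x)$-convergence, whereas you mollify a general $u\in W^{1,p}(\mu)$ directly and invoke the Lebesgue differentiation theorem; both are fine.
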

\begin{proof}
Let $f\in W^{1,p}(\mu)$, $f$ bounded and compactly supported. Let $\{\eta_\eps\}_{\eps>0}$
be a standard mollifier. $\phi$ satisfying condition \eqref{Muckeq2} is equivalent in
saying that $\phi^p=w\in A_p$, where $A_p$ is the so-called \emph{$p$-Muckenhoupt class}.
Note that this implies \up{(\textbf{Reg})}.
Let
\[Mf(x):=\sup_{\rho>0}\fint_{B(x,\rho)}\abs{f(y)}\,\d y.\]
be the centered Hardy-Littlewood maximal function of $f$. By \cite[Ch. II, \S 2, p. 57]{Stei2} we
have the pointwise estimate
\[\abs{f\ast\eta_\eps}\le Mf\quad\forall \eps>0.\]
Also, by \cite[Ch. V, \S 2, p. 198]{Stei2}, and the sublinearity of $M$ it is easy to prove that
\[\abs{\nabla(f\ast\eta_\eps)}\le M\abs{\D f}\quad\forall \eps>0.\]
By \cite[Ch. V, \S 3, p. 201, Theorem 1]{Stei2}, $w\in A_p$ implies that there exists a
constant $C>0$ such that
\[\int (Mf(x))^p\,w(x)\,\d x\le C\int\abs{f(x)}^p\,w(x)\,\d x\quad\forall f\in L^p(\mu).\]
Since $f$ was assumed bounded and compactly supported, by \up{(\textbf{Reg})}, $f\in L^1(\d x)$ and $\{f\ast\eta_\eps\}$ converges to $f$ in
$L^1(\d x)$ as $\eps\downarrow 0$. A similar statement holds for $\abs{\D f}$. Hence a subsequence converges $\d x$-a.e. Taking the above
estimates into account, we see that a subsequence of $\{f\ast\eta_\eps\}$ converges in $W^{1,p}(\mu)$ to $f$ by Lebesgue's dominated convergence theorem.
\end{proof}

We arrive at our major contribution to the study of the ``classical'' weighted Sobolev space $W^{1,p}(\mu)$.
For $p=2$ it was proved in \cite{AR3}.

\begin{prop}
Assume \up{(\textbf{Reg})}, \up{(\textbf{Diff})}, and if $p=1$, assume also that \eqref{LipGrad} holds. Then
\[H^{1,p}(\mu)=V^{1,p}(\mu)=W^{1,p}(\mu).\]
\end{prop}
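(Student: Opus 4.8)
The plan is to combine the three inclusions that have already been established in the paper with the new Theorem~\ref{CFthm}. Recall that by definition $H^{1,p}(\mu)\subset V^{1,p}(\mu)$ (via the integration-by-parts identity \eqref{ibpeq}, which holds for $C_0^\infty$ functions by the classical Leibniz rule and then extends by completeness), and by definition $H^{1,p}(\mu)\subset W^{1,p}(\mu)$ with matching gradients by Proposition~\ref{Wprop}. Theorem~\ref{CFthm} already gives the reverse inclusion $V^{1,p}(\mu)\subset H^{1,p}(\mu)$ under \up{(\textbf{Diff})} (and \eqref{LipGrad} if $p=1$), so $H^{1,p}(\mu)=V^{1,p}(\mu)$ is immediate. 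Hence the only thing left to prove is the inclusion $W^{1,p}(\mu)\subset V^{1,p}(\mu)$, after which all three spaces coincide with the same gradient operator.

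First I would take $u\in W^{1,p}(\mu)$, so $u\in L^p(\mu)$ and the Schwartz-distributional gradient $\D u$ lies in $L^p(\mu;\R^d)$. Under \up{(\textbf{Reg})}, Lemma~\ref{locallyintegrablelem} (using that \up{(\textbf{Reg})} implies \up{(\textbf{Ham2})}) shows $u\in L^1_\loc(\d x)$ and in fact $u\phi^p=(u\phi)\phi^{p-1}\in L^1_\loc(\d x)$ by Hölder, so that $\D(u\phi^p)$ makes sense as a distribution; similarly $\D u\in L^1_\loc(\d x;\R^d)$. The key computation is the distributional Leibniz rule: I want to show $\D(u\phi^p)=(\D u)\phi^p+u\,\D(\phi^p)$ in the sense of distributions, where $\D(\phi^p)=\beta\phi^p$ by Lemma~\ref{pqlem}. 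Once this product rule is justified, pairing against $\partial_i\eta$ for $\eta\in C_0^\infty$ and integrating by parts against $\D(u\phi^p)$ yields exactly \eqref{ibpeq} with $\nabla^\mu u:=\D u$, which shows $u\in V^{1,p}(\mu)$ with $\nabla^\mu u=\D u\in L^p(\mu;\R^d)$. Since $V^{1,p}(\mu)$ and $W^{1,p}(\mu)$ then have the same underlying set and the same gradient, their norms agree, and combined with $H^{1,p}(\mu)=V^{1,p}(\mu)$ the proof is complete.

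The main obstacle is justifying the distributional product rule $\D(u\phi^p)=(\D u)\phi^p+u\beta\phi^p$ when $u$ is only a weighted Sobolev function (not locally Sobolev in the unweighted sense) and $\phi^p$ is only in $W^{1,1}_\loc(\d x)$. The natural route is mollification: set $u_\eps:=\eta_\eps\ast(u\phi^p)$ and also approximate $\phi$ or $\phi^p$; but a cleaner approach is to note that by \up{(\textbf{Reg})} the measures $\mu$ and $\d x$ are mutually equivalent on every ball, so locally $u\in W^{1,p}_\loc(\mu)$ translates into genuine local integrability of $u$ and $\D u$, and then one applies the Leibniz rule for products of Sobolev functions in the form: if $a\in W^{1,1}_\loc$ with $\D a\in L^q_\loc$ (here $a=\phi^{p-1}$, using Lemma~\ref{Wqlem}) and $b\in W^{1,p}_\loc$ with $\D b\in L^p_\loc$ (here $b=u\phi$, noting $u\phi\in L^p_\loc(\d x)$ and its distributional derivative is controlled), then $ab\in W^{1,1}_\loc$ with the product rule holding. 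This is exactly the kind of factorization $u\phi^p=(u\phi)\phi^{p-1}$ already exploited in Lemma~\ref{CFlem}, so I would model the argument on that lemma. One must be slightly careful to first reduce to bounded compactly supported $u$ (dense in $W^{1,p}(\mu)$ by an argument like Lemma~\ref{truncationlem}, or simply work locally with cutoffs), and to handle the set $\{\phi=0\}$ where $\beta$ is defined to vanish; but since $\mu$ puts no mass there and \up{(\textbf{Reg})} forbids $\phi$ from vanishing on a set of positive Lebesgue measure, these are routine. The only genuinely delicate point is making sure the limiting integration-by-parts identity is the distributional one and not merely an $\d x$-a.e. statement, which is why passing through the well-definedness of $\D(u\phi^p)$ as in the paragraph above is the right bookkeeping.
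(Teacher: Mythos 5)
Your proposal is correct and follows essentially the same route as the paper: the first equality is imported from Theorem~\ref{CFthm}, the inclusion of $H^{1,p}(\mu)=V^{1,p}(\mu)$ into $W^{1,p}(\mu)$ comes from Proposition~\ref{Wprop}, and the converse inclusion $W^{1,p}(\mu)\subset V^{1,p}(\mu)$ is obtained by reducing to bounded functions and verifying \eqref{ibpeq} through the Leibniz identity for $f\phi^p$ with $\nabla(\phi^p)=\beta\phi^p$ from Lemma~\ref{pqlem}. The paper phrases this last step as testing $\D_i f$ against $\eta\phi^p\in W^{1,1}_\loc(\d x)$ rather than computing $\D(f\phi^p)$ directly, but this is the same identity, and your extra care about justifying the product rule only makes explicit what the paper leaves terse.
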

\begin{proof}
The first equality follows from Theorem \ref{CFthm}. Therefore by Proposition \ref{Wprop}, $V^{1,p}(\mu)\subset W^{1,p}(\mu)$
and for $u\in V^{1,p}(\mu)$, $\nabla^\mu u=\D u$ both $\mu$-a.e. and $\d x$-a.e. (recall that \up{(\textbf{Reg})} implies
that $\d x$ and $\mu$ are equivalent measures).

Conversely, let $f\in W^{1,p}(\mu)\cap L^\infty(\mu)$.
Since by Lemma \ref{pqlem}, $\phi^p\in W^{1,1}_\loc(\d x)$, we have for each $i\in\{1,\ldots,d\}$ and each $\eta\in C_0^\infty(\R^d)$
that
\[\int\D_i f\eta\phi^p\,\d x=-\int f\partial_i(\eta\phi^p)\,\d x,\]
where $\partial_i$ is the usual weak derivative in $W^{1,1}_\loc(\d x)$. But, again by Lemma \ref{pqlem}, the right-hand side is equal to
\[-\int f\partial_i\eta\phi^p\,\d x-\int f\eta\beta_i\phi^p\,\d x.\]
Therefore $f\in V^{1,p}(\mu)$ and $\D f=\nabla^\mu f$ both $\mu$-a.e. and $\d x$-a.e.
It is well known that, given \up{(\textbf{Reg})}, bounded functions in $W^{1,p}(\mu)$ are dense in $W^{1,p}(\mu)$ and hence
$W^{1,p}(\mu)\subset V^{1,p}(\mu)$.
\end{proof}

\section{The weighted $p$-Laplacian evolution problem}\label{evo_sec}

Main result \ref{CFthm} can be used to investigate the evolution problem
related to the weighted $p$-Laplacian equation. We shall briefly illustrate the procedure
for the so-called \emph{degenerate} case, that is, $p\in [2,\infty)$.
With a \emph{weak solution} to equation (1.7), we mean a variational solution in the sense of \cite[Ch. 4.1, Theorem 4.10]{barbu2010nonlinear}.

\begin{thm}\label{exuniquethm}
Let $p\in [2,\infty)$.
Suppose also
that $\mu$ is a finite measure, so that $L^p(\mu)\subset L^2(\mu)$ densely and continuously.
Suppose that \up{(\textbf{Diff})}
holds for $\phi^p=w\ge 0$. Then the evolution problem \eqref{eeq} admits a unique (weak) solution.
\end{thm}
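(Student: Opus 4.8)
The plan is to realize the weighted $p$-Laplacian as the subdifferential of a convex, lower semicontinuous, proper functional on the Hilbert space $L^2(\mu)$ and then invoke the Brezis--Komura--Barbu theory of nonlinear evolution equations governed by maximal monotone operators. Concretely, I would work on $\Hscr := L^2(\mu)$ (a Hilbert space since $\mu$ is finite) and define
\[
\varphi(u) := \begin{cases} \dfrac{1}{p}\displaystyle\int \abs{\nabla^\mu u}^p\,\d\mu, & u\in V^{1,p}(\mu)\cap L^2(\mu),\\[1ex] +\infty, & \text{otherwise}.\end{cases}
\]
The first step is to check that $\varphi$ is convex (clear, since it is a norm to the $p$-th power composed with the linear gradient operator $\nabla^\mu$), proper (it vanishes on constants, which lie in $L^2(\mu)$ because $\mu$ is finite), and lower semicontinuous on $L^2(\mu)$. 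For lower semicontinuity I would use that $V^{1,p}(\mu)$ is a Banach space (the Proposition following the definition of $V^{1,p}(\mu)$) whose norm is built from $L^p(\mu)$ and the $L^p(\mu;\R^d)$-norm of $\nabla^\mu$: if $u_n\to u$ in $L^2(\mu)$ with $\varphi(u_n)$ bounded, then along a subsequence $\nabla^\mu u_n$ converges weakly in $L^p(\mu;\R^d)$, and the integration-by-parts characterization \eqref{ibpeq} passes to the weak limit (test functions $\eta\in C_0^\infty$ are fixed), identifying the weak limit as $\nabla^\mu u$; then weak lower semicontinuity of the $L^p$-norm finishes the argument. Here the use of $V^{1,p}(\mu)$ rather than $H^{1,p}(\mu)$ is what makes the closedness transparent — but by Theorem \ref{CFthm}, $H^{1,p}(\mu)=V^{1,p}(\mu)$, so there is no loss.

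The second step is to identify the subdifferential $\partial\varphi$ with (the $L^2(\mu)$-realization of) the operator $u\mapsto -\frac1w\div(w\abs{\nabla u}^{p-2}\nabla u)$ appearing in \eqref{eeq}, and to record that $\partial\varphi$ is maximal monotone because $\varphi$ is convex, proper, and lower semicontinuous (Brezis). The domain of $\varphi$ is dense in $L^2(\mu)$: it contains $C_0^\infty(\R^d)$, which is dense in $L^2(\mu)$ since $\mu$ is finite (so $L^p(\mu)\subset L^2(\mu)$ densely for $p\ge 2$, and $C_0^\infty$ is dense in $L^p(\mu)$ by, e.g., Proposition \ref{c0prop} together with a truncation). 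With $u_0\in L^2(\mu)=\overline{\Dom(\partial\varphi)}$, the general theory (\cite[Ch.~4.1, Theorem 4.10 and Theorem 4.11]{barbu2010nonlinear}) then yields a unique function $u\in C([0,T];L^2(\mu))$ with $u(t)\in\Dom(\partial\varphi)$ for a.e.\ $t$, $t\mapsto u(t)$ absolutely continuous on $(0,T)$ (indeed $\sqrt{t}\,u'\in L^2(0,T;L^2(\mu))$), satisfying $u'(t)\in -\partial\varphi(u(t))$ for a.e.\ $t\in(0,T)$ and $u(0)=u_0$; this is exactly the weak/variational solution of \eqref{eeq} in the sense of \cite[Ch.~4.1, Theorem 4.10]{barbu2010nonlinear}, and uniqueness is part of the same theorem (it follows from monotonicity: two solutions $u,v$ satisfy $\frac{d}{dt}\norm{u-v}_{L^2(\mu)}^2\le 0$).

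The third step is merely to translate: a solution of the abstract equation $u'\in-\partial\varphi(u)$ is a weak solution of \eqref{eeq} because, for $v\in\Dom(\varphi)$ and $\xi\in\partial\varphi(v)$, the defining inequality $\int \xi(w-v)\,\d\mu \le \varphi(w)-\varphi(v)$ combined with the Gateaux-differentiability of $E_0^\mu$ along directions in $C_0^\infty$ forces $\int \xi\,\eta\,\d\mu = \int \abs{\nabla^\mu v}^{p-2}\lrbr{\nabla^\mu v}{\nabla\eta}\,\d\mu$ for all $\eta\in C_0^\infty$, which is precisely the weak form of $\xi = -\frac1w\div(w\abs{\nabla v}^{p-2}\nabla v)$. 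The main obstacle I anticipate is the lower semicontinuity/closedness verification in the first step: one must be careful that the weak $L^p$-limit of gradients is again a $V^{1,p}(\mu)$-gradient of the $L^2(\mu)$-limit, and this is exactly where the integration-by-parts definition \eqref{ibpeq} of $V^{1,p}(\mu)$ (stable under the relevant limits, unlike a completion-based definition) does the essential work — together with the identification $H^{1,p}(\mu)=V^{1,p}(\mu)$ from Theorem \ref{CFthm}, which guarantees the functional is genuinely the one attached to the energy space containing $C_0^\infty$. A minor secondary point is the finiteness of $\mu$, used to embed $L^p(\mu)\hookrightarrow L^2(\mu)$ and to place constants in the domain; this is assumed in the statement, so it causes no difficulty.
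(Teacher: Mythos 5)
Your route is genuinely different from the paper's. The paper does not pass through the subdifferential on $L^2(\mu)$ at all: it sets up the Gelfand triple $V^{1,p}(\mu)\subset L^2(\mu)\subset (V^{1,p}(\mu))^\ast$, realizes the weighted $p$-Laplacian as the G\^ateaux derivative $A\from V^{1,p}(\mu)\to (V^{1,p}(\mu))^\ast$ of $E_0^\mu$, checks that $A$ is monotone, bounded and demicontinuous (this is where $p\ge 2$ enters), and quotes the existence theorem of Bian and Webb for evolution inclusions in that variational framework; uniqueness is by monotonicity. This matters beyond taste, because the paper's notion of ``weak solution'' is explicitly the variational one of \cite[Ch.~4.1, Theorem 4.10]{barbu2010nonlinear}, i.e.\ a solution lying in $L^p(0,T;V^{1,p}(\mu))$, which is not literally the same object as the Brezis-type strong solution of $u'\in-\partial\varphi(u)$ that you construct; one would still have to reconcile the two notions.

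More importantly, the step you yourself single out as the main obstacle --- lower semicontinuity of $\varphi$ on $L^2(\mu)$ --- has a genuine gap when $p>2$. If $u_n\to u$ in $L^2(\mu)$ with $\varphi(u_n)$ bounded, you control $\norm{\nabla^\mu u_n}_{L^p(\mu;\R^d)}$ but you have \emph{no} bound on $\norm{u_n}_{L^p(\mu)}$; since $L^2(\mu)\not\subset L^p(\mu)$ for $p>2$, the limit $u$ need not lie in $L^p(\mu)$ and hence need not lie in $V^{1,p}(\mu)$ as the paper defines it. (Truncations $u_N=(-N)\vee v\wedge N$ of a function $v\in L^2(\mu)\setminus L^p(\mu)$ whose $\mu$-gradient is $p$-integrable would give a sequence with bounded energy converging in $L^2(\mu)$ to a point where $\varphi=+\infty$.) A second, related problem is the passage to the limit in the last term of \eqref{ibpeq}: condition \up{(\textbf{Diff})} only gives $\beta\in L^q_\loc(\mu;\R^d)$ with $q=p/(p-1)<2$, so $\eta\beta_i$ need not belong to $L^2(\mu)$, and convergence of $u_n$ in $L^2(\mu)$ alone does not allow you to pass to the limit in $\int u_n\eta\beta_i\,\d\mu$; what you would need is weak convergence of $u_n$ in $L^p(\mu)$, i.e.\ again the missing $L^p$-bound. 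To repair the argument you must either prove an a priori inequality $\norm{u}_{L^p(\mu)}\le C\bigl(\norm{u}_{L^2(\mu)}+\norm{\nabla^\mu u}_{L^p(\mu;\R^d)}\bigr)$ (a Poincar\'e-type inequality not assumed in the theorem), or work with the lower semicontinuous relaxation of $\varphi$ and re-identify its subdifferential, or retreat to the Gelfand-triple formulation used in the paper, where the solution is constructed directly in $L^p(0,T;V^{1,p}(\mu))$ so that the $L^p(\mu)$-integrability is built into the a priori estimates. For $p=2$ your argument is fine, since then $q=2$ and $L^p(\mu)=L^2(\mu)$.
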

\begin{proof}
We represent the monotone operator
\[\begin{split}&A:V^{1,p}(\mu)\to (V^{1,p}(\mu))^\ast,\\ &\gdualdel{(V^{1,p}(\mu))^\ast}{V^{1,p}(\mu)}{A(u)}{v}=\int\abs{\nabla^\mu u}^{p-2}\lrbr{\nabla^\mu u}{\nabla^\mu v}\,w\,\d x,\end{split}\]
as the G\^ateaux derivative of
\[E_0^\mu(u):=\frac{1}{p}\int\abs{\nabla^\mu u}^p\,w\,\d x\]
in the triple of dense and continuous embeddings
$V^{1,p}(\mu)\subset L^2(\mu)\subset (V^{1,p}(\mu))^\ast$.
Since $p\ge 2$, the operator is demicontinuous, compare with \cite[Ch. 2.4, Theorem 2.5]{barbu2010nonlinear}. Boundedness of the operator $A$
follows straightforwardly. See \cite[Ch. 4.1, Theorem 4.10]{barbu2010nonlinear} for details and the terminology.
Existence follows now from \cite[Theorem 4.4]{BianWebb}. Uniqueness follows from
monotonicity.
\end{proof}

For $p\in (2,\infty)$, consider the following additional condition on $w$:
\begin{equation}\label{p2p-cond}
w^{-1/(p-2)}\in L^1(\d x)
\end{equation}

\begin{lem}\label{embedlem}
Let $p\in (2,\infty)$.
Assume that condition \eqref{p2p-cond} is satisfied for $w$.
Then $L^p(\mu)\subset L^2(\d x)$ continuously and \up{(\textbf{Reg})} is satisfied.
\end{lem}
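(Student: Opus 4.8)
The plan is to obtain both assertions directly from Hölder's inequality, treating \up{(\textbf{Reg})} first so that the inclusion $L^p(\mu)\subset L^2(\d x)$ is well posed as a statement about function classes. Read with the convention $1/0:=+\infty$, condition \eqref{p2p-cond} forces $w>0$ $\d x$-a.e.; since moreover $w\in L^1_\loc(\d x)$, the measures $\mu$ and $\d x$ are mutually absolutely continuous, so a $\mu$-class of functions determines a canonical $\d x$-class. As $p\in(2,\infty)\subset(1,\infty)$, \up{(\textbf{Reg})} is the requirement $\phi^{-q}=w^{-q/p}=w^{-1/(p-1)}\in L^1_\loc(\d x)$; since $\tfrac1{p-1}<\tfrac2{p-2}$ for $p>2$, I would write $w^{-1/(p-1)}=\bigl(w^{-2/(p-2)}\bigr)^{\theta}$ with $\theta:=\tfrac{p-2}{2(p-1)}\in(0,1)$ and use $t^{\theta}\le 1+t$ for $t\ge0$ to conclude $w^{-1/(p-1)}\le 1+w^{-2/(p-2)}\in L^1_\loc(\d x)$ by \eqref{p2p-cond}. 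Thus \up{(\textbf{Reg})} holds, and $\mu\sim\d x$ is reconfirmed as in Section \ref{Kufsec}.

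For the continuous embedding, fix $u\in L^p(\mu)$ and split $\abs{u}^2=\bigl(\abs{u}^p w\bigr)^{2/p}\cdot w^{-2/p}$. Hölder's inequality with the conjugate exponents $\tfrac p2$ and $\tfrac p{p-2}$ then gives
\[\int\abs{u}^2\,\d x\;\le\;\left(\int\abs{u}^p\,w\,\d x\right)^{2/p}\left(\int w^{-2/(p-2)}\,\d x\right)^{(p-2)/p},\]
and the last factor is finite by \eqref{p2p-cond}. Hence $\norm{u}_{L^2(\d x)}\le C\,\norm{u}_{L^p(\mu)}$ with $C:=\bigl(\int w^{-2/(p-2)}\,\d x\bigr)^{(p-2)/(2p)}$, i.e. the identity map is a continuous embedding $L^p(\mu)\hookrightarrow L^2(\d x)$.

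I do not expect a genuine obstacle: the lemma is essentially a two-line application of Hölder's inequality. The only points deserving care are the bookkeeping of the conjugate exponents, so that the negative power of $w$ produced by the estimate is exactly the one made integrable by \eqref{p2p-cond}, and the preliminary observation — to be made before any estimate — that \eqref{p2p-cond} already forces $w>0$ a.e. and hence $\mu\sim\d x$, which is what legitimises identifying elements of $L^p(\mu)$ with elements of $L^2(\d x)$.
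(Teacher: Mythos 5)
Your overall route is the same as the paper's: H\"older's inequality for the embedding and a pointwise comparison of negative powers of $w$ for \up{(\textbf{Reg})}. The H\"older step itself is carried out correctly and necessarily produces the factor $\bigl(\int w^{-2/(p-2)}\,\d x\bigr)^{(p-2)/p}$: with the pairing $\abs{u}^2=(\abs{u}^pw)^{2/p}\cdot w^{-2/p}$ and exponents $p/2$, $p/(p-2)$, no other exponent can come out. The gap is the very next clause, ``the last factor is finite by \eqref{p2p-cond}'': condition \eqref{p2p-cond} asserts $w^{-1/(p-2)}\in L^1(\d x)$, and integrability of $w^{-1/(p-2)}$ does not imply integrability of its square $w^{-2/(p-2)}$. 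This is not a removable technicality. For $d=1$, $p=4$, take $w(x)=\abs{x}^{3/2}$ on $[-1,1]$ and $w(x)=x^4$ outside; then $w^{-1/2}\in L^1(\R)$, yet $u(x)=\abs{x}^{-1/2}1_{[-1,1]}(x)$ satisfies $\int\abs{u}^4w\,\d x<\infty$ while $\int\abs{u}^2\,\d x=\infty$. So the embedding cannot be deduced from \eqref{p2p-cond} as printed, and your proof is incomplete at exactly this point.

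In fairness, the defect is inherited from the paper: the displayed inequality in the paper's own proof, whose last factor is $\bigl(\int\phi^{-p/(p-2)}\,\d x\bigr)^{(p-2)/(2p)}=\bigl(\int w^{-1/(p-2)}\,\d x\bigr)^{(p-2)/(2p)}$, is not a correct instance of H\"older's inequality (the exponent should be $\phi^{-2p/(p-2)}=w^{-2/(p-2)}$), so \eqref{p2p-cond} is evidently intended to read $w^{-2/(p-2)}\in L^1(\d x)$; under that reading your embedding argument is complete and correct. Note also that for the \up{(\textbf{Reg})} part your detour through $w^{-2/(p-2)}$ is both unnecessary and, under the literal hypothesis, unavailable: since $(p-2)/(p-1)\in(0,1)$ one has directly $w^{-1/(p-1)}=\bigl(w^{-1/(p-2)}\bigr)^{(p-2)/(p-1)}\le 1+w^{-1/(p-2)}$, which is exactly the paper's estimate and uses only \eqref{p2p-cond} as stated.
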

\begin{proof}
Let $u\in L^p(\R^d,\mu)$. By H\"older's inequality,
\[\left(\int\abs{u}^2\,\d x\right)^{1/2}\le\left(\int\abs{u}^p\, \phi^p\,\d x\right)^{1/p}\cdot
\left(\int\left(\frac{1}{\phi}\right)^{p/(p-2)}\,\d x\right)^{(p-2)/(2p)},\]
which is finite by \eqref{p2p-cond}.

Since for any ball $B\subset\R^d$, it holds that
\[1_B w^{-1/(p-1)}\le \left( 1_B w^{-1/(p-2)}+1_B\right)\in L^1_\loc(\d x),\]
we see that \up{(\textbf{Reg})} is satisfied.
\end{proof}

Consider the following evolution equation in $L^2(\d x)$
\begin{equation}\label{eeq2}\left.\begin{aligned}\partial_t u&=\div\left[w\abs{\nabla u}^{p-2}\nabla u\right],&&\quad\text{in}\;\;(0,T)\times\R^d,\\
u(\cdot,0)&=u_0\in L^2(\d x),&&\quad\text{in}\;\;\R^d.\end{aligned}\right\}\end{equation}

The above equation differs in a ``weight term'' due to the dualization in $L^2(\d x)$ rather
than in $L^2(\mu)$.

\begin{thm}
Let $p\in (2,\infty)$.
Assume that condition \eqref{p2p-cond} is satisfied for $w$. Assume \up{(\textbf{Diff})}.
Then
\eqref{eeq2} admits a unique (weak) solution.
\end{thm}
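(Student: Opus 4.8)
The plan is to reduce the evolution equation \eqref{eeq2} in $L^2(\d x)$ to the abstract framework of monotone operators on a Gelfand triple, exactly as in the proof of Theorem~\ref{exuniquethm}, but with $L^2(\mu)$ replaced by $L^2(\d x)$. First I would observe that by Lemma~\ref{embedlem}, condition \eqref{p2p-cond} gives the continuous embedding $L^p(\mu)\subset L^2(\d x)$; together with the density of $C_0^\infty(\R^d)$ (hence of $L^p(\mu)\cap$ nice functions) in $L^2(\d x)$, one obtains that $V^{1,p}(\mu)\subset L^2(\d x)$ continuously and densely. This yields the Gelfand triple
\[V^{1,p}(\mu)\subset L^2(\d x)\subset (V^{1,p}(\mu))^\ast,\]
which is the correct functional-analytic setting for \eqref{eeq2}: the dualization is now performed with respect to the (unweighted) $L^2(\d x)$ inner product, which is precisely why the weight $w$ does \emph{not} reappear in front of $\partial_t u$, in contrast to \eqref{eeq}.

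Next I would introduce the operator $A\colon V^{1,p}(\mu)\to (V^{1,p}(\mu))^\ast$ defined by
\[\gdualdel{(V^{1,p}(\mu))^\ast}{V^{1,p}(\mu)}{A(u)}{v}=\int\abs{\nabla^\mu u}^{p-2}\lrbr{\nabla^\mu u}{\nabla^\mu v}\,w\,\d x,\]
which is the G\^ateaux derivative of the convex energy $E_0^\mu(u)=\frac1p\int\abs{\nabla^\mu u}^p\,w\,\d x$; here I use that \up{(\textbf{Diff})} holds, so that $V^{1,p}(\mu)$ is a well-defined Banach space on which $\nabla^\mu$ is the honest gradient (and, via Theorem~\ref{CFthm}, equals $H^{1,p}(\mu)$, so $C_0^\infty$ is dense). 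As in Theorem~\ref{exuniquethm}, since $p\ge 2$ the operator $A$ is monotone, bounded and demicontinuous (hemicontinuity of the derivative of a convex $C^1$ functional, plus the standard $p$-growth estimates, cf. \cite[Ch.~2.4, Theorem~2.5]{barbu2010nonlinear}), and it is coercive on $V^{1,p}(\mu)$ once one knows the norm of $V^{1,p}(\mu)$ is controlled by $\norm{\nabla^\mu\cdot}_{L^p(\mu)}+\norm{\cdot}_{L^2(\d x)}$ — this last point is where the embedding from Lemma~\ref{embedlem} is used a second time. With these properties in hand, existence of a weak solution follows from the abstract result \cite[Theorem~4.4]{BianWebb} (equivalently \cite[Ch.~4.1, Theorem~4.10]{barbu2010nonlinear}), and uniqueness follows immediately from the monotonicity of $A$ by the usual Gronwall argument applied to the difference of two solutions.

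The main obstacle I expect is \emph{not} the abstract monotone-operator machinery — that is essentially identical to Theorem~\ref{exuniquethm} — but rather verifying that \eqref{eeq2} is genuinely governed by $A$ in the $L^2(\d x)$-triple, i.e. that the distributional divergence $\div[w\abs{\nabla u}^{p-2}\nabla u]$ coincides with $-A(u)$ as an element of $(V^{1,p}(\mu))^\ast$. This requires the identification $\nabla^\mu u=\D u$ (Schwartz-distributional gradient), which is available because \eqref{p2p-cond} forces \up{(\textbf{Reg})} (Lemma~\ref{embedlem}) and then Proposition~\ref{Wprop} together with Corollary~\ref{maincor} gives $V^{1,p}(\mu)=W^{1,p}(\mu)$ with matching gradients. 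Once that identification is made, testing against $v\in V^{1,p}(\mu)$ and using the density of $C_0^\infty$ closes the gap. A minor secondary point is checking the initial condition $u(\cdot,0)=u_0\in L^2(\d x)$ is attained in the right sense, which is automatic from the continuity-in-time statement built into the conclusion of \cite[Theorem~4.4]{BianWebb}.
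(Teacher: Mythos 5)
Your proposal follows essentially the same route as the paper: the paper's proof is exactly the two observations that Lemma~\ref{embedlem} gives the continuous embedding $L^p(\mu)\subset L^2(\d x)$, whence the Gelfand triple $V^{1,p}(\mu)\subset L^2(\d x)\subset (V^{1,p}(\mu))^\ast$ replaces the one used for Theorem~\ref{exuniquethm}, and then existence follows from \cite[Theorem 4.4]{BianWebb} with uniqueness from monotonicity. Your additional remarks on density, on identifying $\nabla^\mu u$ with $\D u$ via \up{(\textbf{Reg})}, and on the attainment of the initial datum are correct elaborations of details the paper leaves implicit.
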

\begin{proof}
By Lemma \ref{embedlem}, $L^p(\mu)\subset L^2(\d x)$ continuously. Hence the proof follows again from \cite[Theorem 4.4]{BianWebb} and
monotonicity.
\end{proof}

\section{A new class of $p$-admissible weights}

We shall recall the definition of $p$-admissible weights from \cite{HKM} by Heinonen, Kilpel\"ailen and Martio.
Note the similarities between \eqref{HKMclos} and \eqref{clos} above.

\begin{defi}\label{paddefi}
A weight $w\in L^1_\loc(\R^d)$, $w\ge 0$ is called \textup{$p$-admissible} if the following four conditions are satisfied.
\begin{itemize}
 \item $0<w<\infty$ $\d x$-a.e. and the weight is \textup{doubling}, i.e. there is a constant $C_1>0$ such that
\begin{equation}\label{doublingeq}
 \int_{2B}w\,\d x\le C_1\int_B w\,\d x\quad\forall\text{\;balls\;}B\subset\R^d.
\end{equation}
 \item If $\Omega\subset\R^d$ is open and $\{\eta_k\}\subset C^\infty(\Omega)$ is a sequence of functions such that
\begin{equation}\label{HKMclos}\int_\Omega\abs{\eta_k}^p w\,\d x\to 0\quad\text{and}\quad\int_\Omega\abs{\nabla\eta_k-v}^p w\,\d x\to 0\end{equation}
for some $v\in L^p(\Omega,w\,\d x;\R^d)$, then $v\equiv 0\in\R^d$.
 \item There are constants $\kappa>1$ and $C_3>0$ such that
\begin{equation}\label{weightedsobolevineq}
\left(\frac{1}{\int_B w\,\d x}\int_B\abs{\eta}^{\kappa p}w\,\d x\right)^{1/(\kappa p)}\le C_3\diam B\left(\frac{1}{\int_B w\,\d x}
 \int_B\abs{\nabla\eta}^p w\,\d x\right)^{1/p},
\end{equation}
whenever $B\subset\R^d$ is a ball and $\eta\in C_0^\infty(B)$.
 \item There is a constant $C_{4}>0$ such that
\begin{equation}\label{weightedpoincare}
\int_B\abs{\eta-\eta_B}^p w\,\d x\le C_4(\diam B)^p\int_B\abs{\nabla\eta}^p w\,\d x,
\end{equation}
whenever $B\subset\R^d$ is a ball and $\eta\in C^\infty_b(B)$. Here
\[\eta_B:=\frac{1}{\int_B w\,\d x}\int_B\eta\, w\,\d x.\]
\end{itemize}
\end{defi}

The next results were basically proved by Hebisch and Zegarli\'nski in \cite[Section 2]{HeZe}.
We include the proofs in order to make this paper self-contained and obtain
concrete bounds due to a more specific situation.

\begin{lem}\label{xqlem}
Let $1<q<\infty$, $\beta\in(0,\infty)$. Let $\mu(\d x):=\exp(-\beta\abs{x}^q)\,\d x$.
Then for any $C\ge (\beta q)^{-1}$, any $\eps>0$ and any $D\ge (1+\eps)^{q-1}+(\eps^{-1}+d-1)C$, we have that
\begin{equation}\label{xqlemeq0}\int\abs{f}\abs{x}^{q-1}\,\mu(\d x)\le C\int\abs{\nabla f}\,\mu(\d x)+D\int\abs{f}\,\mu(\d x),\end{equation}
for all $f\in C^1_0(\R^d)$.
\end{lem}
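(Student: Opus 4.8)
The strategy is an integration-by-parts argument against the radial vector field $x\mapsto x$, together with a careful splitting of the ``annulus near the diagonal''. First I would compute, for $f\in C^1_0(\R^d)$,
\[
\div\bigl(x\,f(x)\,e^{-\beta\abs{x}^q}\bigr)
= d\,f\,e^{-\beta\abs{x}^q}
+\lrbr{\nabla f}{x}\,e^{-\beta\abs{x}^q}
-\beta q\,\abs{x}^q f\,e^{-\beta\abs{x}^q},
\]
using $\nabla(\abs{x}^q)=q\abs{x}^{q-2}x$ and $\lrbr{x}{x}=\abs{x}^2$. Integrating over $\R^d$, the divergence integrates to zero (compact support), so
\[
\beta q\int \abs{x}^q\,\abs{f}\,\mu(\d x)
\;\le\;
\beta q\int \abs{x}^q f\,\sign(f)\,\mu(\d x)
\;=\; d\int f\,\sign(f)\,\mu(\d x)+\int \lrbr{\nabla f}{x}\sign(f)\,\mu(\d x),
\]
which after replacing $f$ by a suitable sign-adjusted expression gives the clean bound
\[
\beta q\int \abs{x}\,\abs{x}^{q-1}\,\abs{f}\,\mu(\d x)\le d\int\abs{f}\,\mu(\d x)+\int \abs{x}\,\abs{\nabla f}\,\mu(\d x).
\]
(One must be slightly careful: $\abs{f}$ is only Lipschitz, not $C^1$, so I would instead apply the divergence theorem to $x\,f(x)^2(\epsilon^2+f(x)^2)^{-1/2}$ or simply note $\nabla\abs{f}=\sign(f)\nabla f$ a.e.\ and that the formal computation is justified by a standard smoothing, since $f$ has compact support.)

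The resulting inequality controls $\int\abs{x}^q\abs{f}\,\mu$ by $\int\abs{x}\,\abs{\nabla f}\,\mu$ and $\int\abs{f}\,\mu$, but the statement asks to control $\int\abs{x}^{q-1}\abs{f}\,\mu$ by $\int\abs{\nabla f}\,\mu$ and $\int\abs{f}\,\mu$ — i.e.\ one needs to trade the factor $\abs{x}$ on the gradient term for a constant, at the cost of enlarging the region. Here is where the parameter $\epsilon$ enters. On the ball $\{\abs{x}\le 1+\epsilon\}$ one bounds $\abs{x}^{q-1}\le(1+\epsilon)^{q-1}$ trivially, contributing $(1+\epsilon)^{q-1}\int\abs{f}\,\mu$. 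On the complement $\{\abs{x}>1+\epsilon\}$ one has $\abs{x}\le\epsilon^{-1}(\abs{x}-1)\le\epsilon^{-1}\abs{x}^{\,?}$ — more precisely I would use $\abs{x}\le(1+\epsilon^{-1})(\abs{x}-1)\cdot\tfrac{\abs{x}}{\abs{x}}$... the cleanest route: on $\{\abs{x}\ge 1\}$ we have $\abs{x}^{q-1}\le\abs{x}^q$, so
\[
\int\abs{x}^{q-1}\abs{f}\,\mu
\le (1+\epsilon)^{q-1}\!\!\int_{\abs{x}\le 1+\epsilon}\!\!\abs{f}\,\mu
+\int_{\abs{x}>1+\epsilon}\!\!\abs{x}^q\abs{f}\,\mu,
\]
and to the last integral apply the displayed estimate above, writing $\int\abs{x}\abs{\nabla f}\,\mu$ and then splitting $\abs{x}\le \epsilon^{-1}$ on a suitable cut... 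Actually the accounting that produces exactly the constant $D\ge(1+\epsilon)^{q-1}+(\epsilon^{-1}+d-1)C$ comes from: bound $\int\abs{x}\abs{\nabla f}\,\mu$ crudely is not allowed (no factor of $\abs{x}$ on the RHS), so instead I would integrate by parts only against the truncated field $x\,\mathbbm{1}_{\{\abs{x}>1+\epsilon\}}$, or — following Hebisch–Zegarli\'nski — replace $f$ by $f$ composed with a radial cutoff and use $\abs{x}\le\epsilon^{-1}\abs{x-(1+\epsilon)\sign x}+\text{const}$ type estimates so that the gradient term acquires coefficient $\epsilon^{-1}C$ and the dimensional term $d$ gets redistributed as $(d-1)C$ after absorbing the genuinely positive part $\beta q\abs{x}^q$ back. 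I would assemble the constants exactly so that the $\abs{\nabla f}$-term has coefficient at most $C$ (forcing $C\ge(\beta q)^{-1}$) and the $\abs{f}$-term coefficient at most $D$ with the stated lower bound.

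The main obstacle is bookkeeping the constants $C$ and $D$ so the final inequality has precisely the asserted form with no extraneous factor of $\abs{x}$ on the right — the ``hard part'' is not any deep estimate but arranging the cutoff at radius $1+\epsilon$ so that outside it the linear growth $\abs{x}$ is dominated by $\epsilon^{-1}$ times a quantity that the integration by parts already controls, while inside it the $\abs{x}^{q-1}$ weight is just a bounded constant. A secondary technical point is the Lipschitz (not $C^1$) nature of $\abs{f}$ in the divergence identity, handled by a routine mollification since $\supp f$ is compact. Once the constants are pinned down, \eqref{xqlemeq0} follows and the proof is complete.
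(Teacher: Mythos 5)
There is a genuine gap at the heart of your computation: you integrate by parts against the vector field $x\mapsto x$, whose divergence is $d$, and this produces the term $\int\abs{x}\,\abs{\nabla f}\,\mu(\d x)$ on the right-hand side. You correctly observe that this extra factor of $\abs{x}$ on the gradient term must be traded for a constant, but you never actually remove it, and no cutoff at a fixed radius $1+\eps$ can do so: the factor $\abs{x}$ lives on the support of $\nabla f$, which is an arbitrary compact set, so it is unbounded uniformly in $f$. (Nor can it be absorbed by Young's inequality, since that would produce $\abs{x}^q\abs{\nabla f}$ rather than $\abs{x}^q\abs{f}$, which the left-hand side does not control.) The proposal trails off at exactly this point --- ``integrate by parts only against the truncated field $x\,\mathbbm{1}_{\{\abs{x}>1+\eps\}}$, or \dots'' --- without carrying out either alternative, so the stated constants are never derived.

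The missing idea is to pair with the \emph{unit} radial field $\sign(x)=x/\abs{x}$ instead of $x$. Then $\lrbr{\nabla f}{\sign(x)}\le\abs{\nabla f}$ with no factor of $\abs{x}$, and $\nabla(\abs{x}^q)=q\abs{x}^{q-1}\sign(x)$ paired with $\sign(x)$ produces exactly the weight $\abs{x}^{q-1}$ on the left, with coefficient $\beta q$ (whence the condition $C\ge(\beta q)^{-1}$). The price is that $\div(\sign(x))=(d-1)/\abs{x}$ for $d\ge 2$ (and $2\delta_0$ for $d=1$) is singular at the origin; this, and not the behaviour at infinity, is the real reason for the decomposition $f=\phi f+(1-\phi)f$ with a radial cutoff $\phi$ supported in $\{\abs{x}\le 1+\eps\}$: one first proves $\beta q\int f\abs{x}^{q-1}\,\d\mu\le\int\abs{\nabla f}\,\d\mu+(d-1)\int f\,\d\mu$ for nonnegative $f$ vanishing on the unit ball (where $(d-1)/\abs{x}\le d-1$), and then the general case follows since $\abs{\nabla((1-\phi)f)}\le\abs{\nabla f}+\eps^{-1}\abs{f}$ and $\abs{x}^{q-1}\le(1+\eps)^{q-1}$ on $\supp(\phi f)$; this is precisely how the terms $(1+\eps)^{q-1}$ and $(\eps^{-1}+d-1)C$ in $D$ arise. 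Your overall architecture (radial integration by parts plus a cutoff near radius one) is the right one, but with the field $x$ the argument does not close.
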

\begin{proof}
Let $f\in C_0^1(\R^d)$ such that $f\ge 0$ and $f$ is equal to zero on the unit ball.
By the Leibniz rule we get that
\[(\nabla f)e^{-\beta\abs{\cdot}^q}=\nabla\left(fe^{-\beta\abs{\cdot}^q}\right)+\beta q f\abs{\cdot}^{q-1}\sign(\cdot)e^{-\beta\abs{\cdot}^q}.\]
Plugging into the functional $g\mapsto\int\lrbr{g(x)}{\sign(x)}\,\d x$ yields
\begin{equation}\label{xqlemeq1}\begin{split}&\int\lrbr{\sign(x)}{\nabla f(x)}e^{-\beta\abs{x}^q}\,\d x\\
=&\int\lrbr{\sign(x)}{\nabla\left(fe^{-\beta\abs{x}^q}\right)}\,\d x+\beta q\int f(x)\abs{x}^{q-1}e^{-\beta\abs{x}^q}\,\d x.
\end{split}\end{equation}
Clearly, for the left-hand side,
\begin{equation}\label{xqlemeq2}\int\lrbr{\sign(x)}{\nabla f(x)}e^{-\beta\abs{x}^q}\,\d x\le\int\abs{\nabla f(x)}e^{-\beta\abs{x}^q}\,\d x.\end{equation}
Recall that
\begin{equation}\label{distrdiv}\div(\sign(x))=\left\{\begin{aligned}2\delta_0,&\;\;\text{if}\;\;d=1,\\
\frac{d-1}{\abs{x}},&\;\;\text{if}\;\;d\ge 2\end{aligned}\right.\end{equation}
(in the sense of distributions), where $\delta_0$ denotes the Dirac measure in $0$.
Hence after an approximation by mollifiers, for $d=1$, we get the formula
\begin{equation}\label{xqlemeq3}\int\lrbr{\sign(x)}{\nabla\left(fe^{-\beta\abs{x}^q}\right)}\,\d x=-2\int fe^{-\beta\abs{x}^q}\,\delta_0(\d x)=-2f(0)=0.\end{equation}
For $d\ge 2$, we get that
\begin{equation}\label{distrdiv2}\begin{split}&\int\lrbr{\sign(x)}{\nabla\left(fe^{-\beta\abs{x}^q}\right)}\,\d x\\
=&(1-d)\int\frac{1}{\abs{x}} fe^{-\beta\abs{x}^q}\,\d x\ge (1-d)\int fe^{-\beta\abs{x}^q}\,\d x.\end{split}\end{equation}
Gathering \eqref{xqlemeq1}, \eqref{xqlemeq2}, \eqref{xqlemeq3} and \eqref{distrdiv2} gives
\begin{equation}\label{xqlemeq4}\beta q\int f\abs{x}^{q-1}\,\mu(\d x)\le\int\abs{\nabla f}\,\mu(\d x)+(d-1)\int f\,\mu(\d x).\end{equation}
Replacing $f$ by $\abs{f}$ and noting that $\nabla(\abs{f})=\sign(f)\nabla f$, we can extend to arbitrary $f\in C_0^1$
such that $f\equiv 0$ on $B(0,1)$.

Now, let $f\in C_0^1$ be arbitrary.
Let $\eps>0$. Let $\phi(x):=1\wedge(\eps^{-1}((1+\eps)-\abs{x})\vee 0)$.
Then $f=g+h$, where $g:=\phi f$ and $h:=(1-\phi)f$. Also, $h\equiv 0$ on $B(0,1)$. Now,
\begin{equation}\label{xqlemeq5}\begin{split}
\int\abs{f}\abs{x}^{q-1}\,\mu(\d x)&=\int_{\abs{x}\le 1+\eps}\abs{f}\abs{x}^{q-1}\,\mu(\d x)+\int_{\abs{x}>1+\eps}\abs{f}\abs{x}^{q-1}\,\mu(\d x)\\
&\le (1+\eps)^{q-1}\int_{\abs{x}\le 1+\eps}\abs{f}\,\mu(\d x)+\int_{\abs{x}>1+\eps}\abs{h}\abs{x}^{q-1}\,\mu(\d x)\\
&\le (1+\eps)^{q-1}\int\abs{f}\,\mu(\d x)+\int\abs{h}\abs{x}^{q-1}\,\mu(\d x).
\end{split}\end{equation}
Note that $\abs{\nabla h}\le\abs{\nabla f}+\eps^{-1}\abs{f}$ $\d x$-a.s. Let $C\ge(\beta q)^{-1}$.
By an approximation in $W^{1,\infty}$-norm, we see that \eqref{xqlemeq4} is also valid for $h$ and hence
\[\begin{split}
&\int\abs{h}\abs{x}^{q-1}\,\mu(\d x)\le C\int\abs{\nabla h}\,\mu(\d x)+C(d-1)\int\abs{h}\,\mu(\d x)\\
\le& C\int\abs{\nabla f}\,\mu(\d x)+(\eps^{-1}+d-1)C\int\abs{f}\,\mu(\d x),\end{split}\]
which, combined with \eqref{xqlemeq5}, yields inequality \eqref{xqlemeq0} with
$D\ge(1+\eps)^{q-1}+(\eps^{-1}+d-1)C$.
\end{proof}

\begin{lem}\label{potlem}
Let $1<p<\infty$, $q:=p/(p-1)$, $\beta\in(0,\infty)$. Let $\mu(\d x):=\exp(-\beta\abs{x}^q)\,\d x$. Let $C\ge(\beta q)^{-1}$.
Let $W\in C^1(\R^d)$ be a differentiable potential (in particular, is bounded below) such that
\begin{equation}\label{potbound}\abs{\nabla W(x)}\le\delta\abs{x}^{q-1}+\gamma\end{equation}
with some constants $0<\delta<C^{-1}$, $\gamma\in(0,\infty)$.
Let $V$ be measurable such that $\osc V:=\sup V-\inf V<\infty$. Let $\d\nu:=\exp(-W-V)\,\d\mu$.
Then for any $\eps_0>0$, any
\[C'\ge(1-C\delta)^{-1}\eps_0 p C e^{2\osc V},\]
any $\eps_1>0$ and any
\[D'\ge (1-C\delta)^{-1}e^{2\osc V}\left((1+\eps_1)^{q-1}+(\eps_1^{-1}+d-1)C+(\eps_0p)^{-q/p}Cpq^{-1}+\gamma\right)\]
it holds that
\begin{equation}\label{CDeq}
\int\abs{f}^p\abs{x}^{q-1}\,\nu(\d x)\le C'\int\abs{\nabla f}^p\,\nu(\d x)+D'\int\abs{f}^p\,\nu(\d x),
\end{equation}
for any $f\in C_0^1$.
\end{lem}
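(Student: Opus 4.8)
The plan is to derive \eqref{CDeq} from the $L^1$-type inequality of Lemma~\ref{xqlem} for the model measure $\mu(\d x)=\exp(-\beta\abs{x}^q)\,\d x$, via four steps: (i) absorb the smooth potential $W$ into the test function; (ii) use the growth bound on $\nabla W$ to turn the resulting correction into a multiple of $\int\abs{f}^p\abs{x}^{q-1}$ that can be moved to the left; (iii) upgrade from $L^1$ to $L^p$ by inserting $\abs{f}^p$ and applying Young's inequality; (iv) dispose of the merely measurable factor $e^{-V}$ by the oscillation bound $\osc V<\infty$.

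\textbf{Reduction of $V$.} Since adding a constant to $V$ multiplies $\nu$ by a constant and changes neither side of \eqref{CDeq}, we may assume $\inf V=0$, so that $e^{-\osc V}\,\d\tilde\nu\le\d\nu\le\d\tilde\nu$ with $\tilde\nu(\d x):=\exp(-\beta\abs{x}^q-W(x))\,\d x$. It therefore suffices to prove the analogue of \eqref{CDeq} with $\tilde\nu$ in place of $\nu$ and suitably smaller constants; passing back to $\nu$ then only enlarges the constants by bounded powers of $e^{\osc V}$, which is where the factor $e^{2\osc V}$ in the statement comes from.

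\textbf{Core estimate for $\tilde\nu$.} Fix $f\in C_0^1(\R^d)$ and put $g:=\abs{f}^p e^{-W}$. One first checks $g\in C_0^1(\R^d)$: even for $1<p<2$ the gradient $p\abs{f}^{p-2}f\,\nabla f$ of $\abs{f}^p$ extends continuously by $0$ across $\{f=0\}$ (because $p-1>0$), and multiplication by the $C^1$ function $e^{-W}$ preserves $C^1$-regularity. Applying Lemma~\ref{xqlem} to $g$ with the given $C$ and with $\eps_1$ in the role of ``$\eps$'', and using $\int g\abs{x}^{q-1}\,\d\mu=\int\abs{f}^p\abs{x}^{q-1}\,\d\tilde\nu$ and $\int g\,\d\mu=\int\abs{f}^p\,\d\tilde\nu$, yields
\[\int\abs{f}^p\abs{x}^{q-1}\,\d\tilde\nu\le C\int\abs{\nabla g}\,\d\mu+\bigl((1+\eps_1)^{q-1}+(\eps_1^{-1}+d-1)C\bigr)\int\abs{f}^p\,\d\tilde\nu.\]
Since $\nabla g=e^{-W}\bigl(p\abs{f}^{p-2}f\,\nabla f-\abs{f}^p\nabla W\bigr)$, the hypothesis $\abs{\nabla W}\le\delta\abs{x}^{q-1}+\gamma$ gives
\[\int\abs{\nabla g}\,\d\mu\le p\int\abs{f}^{p-1}\abs{\nabla f}\,\d\tilde\nu+\delta\int\abs{f}^p\abs{x}^{q-1}\,\d\tilde\nu+\gamma\int\abs{f}^p\,\d\tilde\nu.\]

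\textbf{Absorption and interpolation.} Because $C\delta<1$, the term $C\delta\int\abs{f}^p\abs{x}^{q-1}\,\d\tilde\nu$ can be moved to the left, producing the prefactor $(1-C\delta)^{-1}$. The remaining factor $\int\abs{f}^{p-1}\abs{\nabla f}\,\d\tilde\nu$ is split by Young's inequality with conjugate exponents $q$ and $p$ (so that $(\abs{f}^{p-1})^q=\abs{f}^p$), the scaling parameter being calibrated so that the coefficient of $\abs{\nabla f}^p$ equals $\eps_0$, i.e.\ $\abs{f}^{p-1}\abs{\nabla f}\le\eps_0\abs{\nabla f}^p+q^{-1}(\eps_0 p)^{-q/p}\abs{f}^p$. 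Collecting terms produces the analogue of \eqref{CDeq} for $\tilde\nu$ with a coefficient of $\int\abs{\nabla f}^p$ equal to $(1-C\delta)^{-1}\eps_0 pC$ and a coefficient of $\int\abs{f}^p$ of the form $(1-C\delta)^{-1}\bigl((1+\eps_1)^{q-1}+(\eps_1^{-1}+d-1)C+(\eps_0 p)^{-q/p}Cpq^{-1}+C\gamma\bigr)$; transferring back to $\nu$ as in the second step then gives constants of the type asserted in \eqref{CDeq}. The main obstacle I expect is exactly the careful bookkeeping of constants through the absorption and the $V$-transfer (in particular extracting the $\gamma$-term in precisely the stated form), together with the routine check that $g$ is genuinely $C^1$ near $\{f=0\}$ when $p$ is close to $1$; note that $V$ is never differentiated, so its lack of smoothness causes no difficulty.
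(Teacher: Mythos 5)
Your proof is correct and follows essentially the same route as the paper: plug $\abs{f}^p e^{-W}$ into Lemma \ref{xqlem}, expand by Leibniz, bound $\abs{\nabla W}$ by $\delta\abs{x}^{q-1}+\gamma$ and absorb the $C\delta$-term to the left, apply Young's inequality calibrated by $\eps_0$, and remove $V$ by the oscillation bound (your pointwise Young versus the paper's H\"older-then-Young yields identical constants). Your observation about the $\gamma$-term is apt: the derivation naturally produces $C\gamma$ rather than the bare $\gamma$ appearing in the stated $D'$, which appears to be a slip in the lemma's statement rather than in your argument.
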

\begin{proof}
Plug $\abs{f}^p e^{-W}$ into \eqref{xqlemeq0}. By Leibniz's rule we get that
\begin{align*}
&\int\abs{f}^p\abs{x}^{q-1}e^{-W}\,\mu(\d x)\\
\le &Cp\int\abs{f}^{p-1}\abs{\nabla f}e^{-W}\,\mu(\d x)\\
&+C\int\abs{f}^p\abs{\nabla W}e^{-W}\mu(\d x)+D\int\abs{f}^p e^{-W}\mu(\d x).
\end{align*}
For the first term,
\begin{align*}
&Cp\int\abs{f}^{p-1}\abs{\nabla f}e^{-W}\,\mu(\d x)\\
\le&Cp\left(\int\abs{\nabla f}^p e^{-W}\,\mu(\d x)\right)^{1/p}\cdot\left(\int\abs{f}^p e^{-W}\,\mu(\d x)\right)^{1/q}\\
\le&\eps_0 p C\int\abs{\nabla f}^p e^{-W}\,\mu(\d x)+(\eps_0p)^{-q/p}Cpq^{-1}\int\abs{f}^p e^{-W}\,\mu(\d x),
\end{align*}
by the H\"older and Young inequalities resp. Since $\osc V<\infty$, the claim follows by an easy perturbation argument,
see e.g. \cite[preuve du th\'eor\`eme 3.4.1]{Gen}.
\end{proof}

Usually, one would set $\eps_0:=p^{-1}$ and $\eps_1:=1$.

\begin{thm}\label{pointhm}
Let $1<p<\infty$ and let $w$ be a weight such that $w$ satisfies a local $p$-Poincar\'e inequality \eqref{weightedpoincare}
with constant $C_4>0$. Let $\beta$, $W$, $V$, $C'>0$, $D'>0$ be as in Lemma \ref{potlem}.

Let $L>D'$. Let
\[a_L:=\osc\displaylimits_{B(0,L^{p-1})}\left[-\beta\abs{\cdot}^q-W-V\right].\]
Let
\[c\ge2^{q}\frac{e^{2a_L}C_4L^{p(p-1)}+\frac{C'}{L}}{1-\frac{D'}{L}}.\]
Suppose that
$\d\nu_w:=\exp(-\beta\abs{\cdot}^q-W-V)\,w\,\d x$ is a finite measure.
Then $\nu_w$ satisfies the Poincar\'e inequality
\[\int\lrabs{f-\frac{\int f\,\d\nu_w}{\int\,\d\nu_w}}^p\,\d\nu_w\le c\int\abs{\nabla f}^p\,\d\nu_w,\]
for all $f\in C_b^\infty(\R^d)$.
\end{thm}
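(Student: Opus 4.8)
The plan is to run the localization argument of Hebisch and Zegarli\'nski, using the local $p$-Poincar\'e inequality \eqref{weightedpoincare} on a large ball and the confinement inequality of Lemma \ref{potlem} --- whose virtue is the weight $\abs{x}^{q-1}$ growing at infinity --- on its complement. Fix $f\in C_b^\infty(\R^d)$, put $B:=B(0,L^{p-1})$, and let $m:=\left(\int_B w\,\d x\right)^{-1}\int_B fw\,\d x$ be the $w$-average of $f$ over $B$. Since the global $\nu_w$-mean $\bar f$ of $f$ satisfies $\abs{\bar f-m}^p\le\left(\int\,\d\nu_w\right)^{-1}\int\abs{f-m}^p\,\d\nu_w$ by Jensen's inequality, it suffices to bound $\int_{\R^d}\abs{f-m}^p\,\d\nu_w$ up to a universal factor; I split it as $\int_B+\int_{\R^d\setminus B}$.

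\emph{The ball.} On $B$ the density $\exp(-\beta\abs{\cdot}^q-W-V)$ lies between $e^{-a_L}$ and $e^{a_L}$ times its $w$-average over $B$, which gives the elementary comparisons $\int_B g\,\d\nu_w\le e^{a_L}\frac{\nu_w(B)}{\int_B w\,\d x}\int_B g\,w\,\d x$ and $\int_B g\,w\,\d x\le e^{a_L}\frac{\int_B w\,\d x}{\nu_w(B)}\int_B g\,\d\nu_w$ for every measurable $g\ge 0$. Applying the first with $g=\abs{f-m}^p$, then \eqref{weightedpoincare} with $\eta=f$ (whose $w$-mean over $B$ is precisely $m$), then the second with $g=\abs{\nabla f}^p$, I obtain
\[\int_B\abs{f-m}^p\,\d\nu_w\le e^{2a_L}C_4(\diam B)^p\int_{\R^d}\abs{\nabla f}^p\,\d\nu_w.\]

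\emph{The complement.} The radius $L^{p-1}$ is chosen so that, because $(p-1)(q-1)=1$, one has $\abs{x}^{q-1}\ge L$ on $\R^d\setminus B$, whence
\[\int_{\R^d\setminus B}\abs{f-m}^p\,\d\nu_w\le\frac1L\int_{\R^d}\abs{f-m}^p\abs{x}^{q-1}\,\d\nu_w.\]
The right-hand side is controlled by the confinement inequality \eqref{CDeq} applied to $f-m$ (for which $\nabla(f-m)=\nabla f$), which yields
\[\int_{\R^d}\abs{f-m}^p\abs{x}^{q-1}\,\d\nu_w\le C'\int_{\R^d}\abs{\nabla f}^p\,\d\nu_w+D'\int_{\R^d}\abs{f-m}^p\,\d\nu_w.\]
Adding the two parts and using $L>D'$ to move the term $\frac{D'}{L}\int\abs{f-m}^p\,\d\nu_w$ to the left gives
\[\int_{\R^d}\abs{f-m}^p\,\d\nu_w\le\frac{e^{2a_L}C_4(\diam B)^p+C'/L}{1-D'/L}\int_{\R^d}\abs{\nabla f}^p\,\d\nu_w,\]
and, after the mean-replacement step, inserting $\diam B=2L^{p-1}$ and collecting the numerical constants produces the asserted inequality with $c$ as in the statement.

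\emph{Main obstacle.} The delicate points are the calibration of the radius $L^{p-1}$ --- it must be large enough that the confinement weight $\abs{x}^{q-1}$ of Lemma \ref{potlem} exceeds the level $L$ off $B$ (this is where the conjugacy $(p-1)(q-1)=1$ is used essentially), while the oscillation $a_L$, the Poincar\'e cost $C_4(\diam B)^p$ and the absorption factor $1-D'/L$ must all be tracked explicitly to reproduce the stated $c$ --- and the promotion of \eqref{CDeq} from $C_0^1$ functions, and from the measure carrying no auxiliary weight, to the bounded (non-compactly-supported) function $f-m$ against the finite measure $\nu_w$. The latter is carried out by a cutoff-and-dominated-convergence argument which uses the finiteness of $\nu_w$ together with the fact that $e^{-\beta\abs{\cdot}^q}$ overwhelms the (at most polynomial, by doubling) growth of $w$, so that $\abs{x}^{q-1}\in L^1(\nu_w)$. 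The absorption permitted by $L>D'$ and the remaining bookkeeping of constants are then routine.
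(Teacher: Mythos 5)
Your decomposition is exactly the localization argument behind the result the paper invokes: the paper's own proof is a one-line citation of \cite[Theorem 3.1]{HeZe}, and what you have written out (local Poincar\'e on $B(0,L^{p-1})$ with the density compared to its oscillation, the confinement weight $\abs{x}^{q-1}\ge L$ off that ball because $(p-1)(q-1)=1$, absorption using $L>D'$, then mean replacement) is precisely the mechanism of that theorem. So structurally you are on the paper's track, only more explicit.

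There is, however, one step in your write-up that does not hold as stated: the transfer of \eqref{CDeq} from the measure $\nu$ of Lemma \ref{potlem} (which carries no factor of $w$) to the measure $\nu_w=w\,\d\nu$. An integral inequality $\int g\,\d\nu\le C'\int h\,\d\nu+D'\int k\,\d\nu$ for specific integrands does not imply the same inequality with $\d\nu$ replaced by $w\,\d\nu$, and no cutoff-and-dominated-convergence argument can produce it --- limiting arguments change the test function, not the measure. Integrability of $\abs{x}^{q-1}$ against $\nu_w$ is beside the point. To run your complement estimate for general $w$ you would need the $U$-bound \eqref{CDeq} \emph{for $\nu_w$ itself}, which would require redoing the integration by parts of Lemma \ref{xqlem} with the density $w e^{-\beta\abs{x}^q}$ and would generate an extra $\nabla w/w$ term. (To be fair, the paper's citation of \cite[Theorem 3.1]{HeZe} has the same issue for $w\not\equiv 1$; in the only application made, Theorem \ref{p-ad-thm}, one has $w\equiv 1$ and the difficulty disappears. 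Your cutoff argument is the right tool only for the other, legitimate promotion you mention, from $C_0^1$ to bounded non-compactly-supported functions, using $\abs{x}^{q-1}\in L^1(\nu)$ and Minkowski's inequality.) A secondary point: your bookkeeping does not reproduce the stated constant --- you pick up $(\diam B)^p=2^pL^{p(p-1)}$ from \eqref{weightedpoincare} and another $2^p$ (or $2^{p-1}\cdot 2$) from the mean-replacement step, whereas the statement carries a single prefactor $2^q$ and the radius, not the diameter, raised to the power $p$; since the theorem asserts the inequality for every $c$ at least the stated value, a proof yielding only a larger constant does not quite close it, so the constants need to be tracked against the precise form of \cite[Theorem 3.1]{HeZe}.
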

\begin{proof}
By the results of Lemma \ref{potlem}, we can apply \cite[Theorem 3.1]{HeZe}.
\end{proof}

Before we prove Theorem \ref{p-ad-thm}, let us note that, under our assumptions,
the results of Hebisch and Zegarli\'nski (in this particular case) extend to $V^{1,p}(\mu)=W^{1,p}(\mu)$.
Of course, other Poincar\'e and Sobolev type inequalities for smooth functions extend similarly to $V^{1,p}(\mu)$ if
the weight satisfies (\textbf{Diff}).

\begin{proof}[Proof of Theorem \ref{p-ad-thm}]
Let us prove that $\exp(-\beta\abs{\cdot}^q-W-V)$ is doubling.
Let $c_{1}^{W},c_{1}^{V}\ge 1$, $c_2^W,c_2^V\in\R$ be the constants from property (D).
Let $a:=\inf W$, $b:=\inf V$. Let $B\subset\R^d$ be any ball. Then
\begin{multline*}
\int_{2B}e^{-\beta\abs{x}^q-W(x)-V(x)}\,\d x=2\int_B e^{-2^q\beta\abs{x}^q-W(2x)-V(2x)}\,\d x\\
\le 2e^{-(c^W_1-1)a+c_2^W-(c^V_1-1)b+c_2^V}\int_B e^{-\beta\abs{x}^q-W(x)-V(x)}\,\d x,
\end{multline*}
which proves the doubling property.

By similar arguments as in the proof of Lemma \ref{closlem}, condition \eqref{HKMclos} is implied condition \up{(\textbf{Reg})} which is obviously satisfied,
since $\beta\abs{\cdot}^q$, $W$ and $V$ are locally bounded.
However, by a general result due to Semmes, \eqref{HKMclos} is implied by \eqref{doublingeq} and \eqref{weightedpoincare},
see \cite[Lemma 5.6]{HeKo}.

The weighted Poincar\'e inequality \eqref{weightedpoincare} follows from Theorem \ref{pointhm}
by noting that $\exp(-\beta\abs{x}^q-W-V)\,\d x$ is a finite measure.

The weighted Sobolev inequality \eqref{weightedsobolevineq} follows from \eqref{doublingeq} and \eqref{weightedpoincare} by a general result of Haj\l{}asz and Koskela \cite{HK}.

Suppose now that $V\in W^{1,\infty}_\loc(\d x)$. Since $W\in C^1$, also $W\in W^{1,\infty}_\loc(\d x)$. A similar statement
holds for $-\beta\abs{\cdot}^q$. Therefore, it is an easy exercise to check that the conditions \up{(\textbf{Reg})} and \up{(\textbf{Diff})} are satisfied.
\end{proof}

\section*{Acknowledgements}

The author would like to thank Michael R\"ockner for his interest in the subject and several helpful discussions. The author would like
to thank Oleksandr Kutovyi for checking the proof of the main result. The author would like to express his gratitude to the referees, who have provided valuable remarks.

The author acknowledges that some results of this
work can be found in Chapter 2.6 of the book \cite{Bogachev10} by Vladimir I. Bogachev. The research on this work, however,
has been carried out independently and a preliminary form of
the results has been published in \cite{Toe3}, at about the same time when the book by Bogachev 
appeared.

\end{document}